\theoremstyle{plain}
	\newtheorem{thm}{Theorem}[section]
	\newtheorem{cor}[thm]{Corollary}
	\newtheorem{lem}[thm]{Lemma}
	\newtheorem{prop}[thm]{Proposition}
	\newtheorem*{conj}{Conjecture}
\theoremstyle{definition}
	\newtheorem{dfn}[thm]{Definition}
	\newtheorem{ntns}[thm]{Notations}
	\newtheorem{exa}[thm]{Example}
\theoremstyle{remark}
	\newtheorem{rem}[thm]{Remark}
	\newtheorem{rems}[thm]{Remarks}
\numberwithin{equation}{section}
\newcommand{\C}{\mathbb{C}}
\newcommand{\D}{\mathcal{D}}
\newcommand{\E}{\mathcal{E}}
\newcommand{\I}{\mathcal{I}}
\newcommand{\J}{\mathcal{J}}
\newcommand{\K}{\mathcal{K}}
\newcommand{\N}{\mathbb{N}}
\newcommand{\R}{\mathbb{R}}
\newcommand{\X}{\mathcal{X}}
\newcommand{\dx}{\dmesure\!}
\newcommand{\debar}{\overline{\partial}}
\newcommand{\esp}[2][]{\mathbb{E}_{#1}\!\left[ #2 \right]}
\newcommand{\espcond}[3][]{\mathbb{E}_{#1}\!\left[ #2\hspace{-1mm} \mvert \! #3 \right]}
\newcommand{\odet}[1]{\norm{\det\! ^\perp\!\left(#1\right)}}
\newcommand{\mvert}{\mathrel{}\middle|\mathrel{}}
\newcommand{\norm}[1]{\left\lvert #1 \right\rvert}
\newcommand{\Norm}[1]{\left\lVert #1 \right\rVert}
\newcommand{\pa}{\mathcal{P}\!}
\newcommand{\pp}{\mathcal{P\!P}\!}
\newcommand{\prsc}[2]{\left\langle #1\,, #2 \right\rangle}
\newcommand{\rmes}[1]{\norm{\dmesure\! V_{#1}}}
\newcommand{\var}[1]{\Var\!\left( #1 \right)}
\newcommand{\vol}[1]{\Vol\!\left(#1\right)}
\renewcommand{\L}{\mathcal{L}}
\renewcommand{\P}{\mathbb{P}}
\renewcommand{\S}{\mathbb{S}}
\renewcommand{\H}{H^0(\X,\E \otimes \L^d)}
\renewcommand{\bar}{\overline}
\renewcommand{\epsilon}{\varepsilon}
\renewcommand{\geq}{\geqslant}
\renewcommand{\leq}{\leqslant}
\renewcommand{\tilde}{\widetilde}
\DeclareMathOperator{\card}{Card}
\DeclareMathOperator{\dmesure}{d}
\DeclareMathOperator{\ev}{ev}
\DeclareMathOperator{\Id}{Id}
\DeclareMathOperator{\Var}{Var}
\DeclareMathOperator{\Vol}{Vol}
\DeclarePairedDelimiter{\floor}{\lfloor}{\rfloor}
\author{Michele Ancona\,\thanks{Michele Ancona, Tel Aviv University, School of Mathematical Sciences; e-mail: \url{michi.ancona@gmail.com}. Michele Ancona is supported by the Israeli Science Foundation through the ISF Grants 382/15 and 501/18.} \and Thomas Letendre\,\thanks{Thomas Letendre, Université Paris-Saclay, CNRS, Laboratoire de Mathématiques d’Orsay, 91405 Orsay, France; e-mail: \url{letendre@math.cnrs.fr}. Thomas Letendre is supported by the French National Research Agency through the ANR grants UniRaNDom (ANR-17-CE40-0008) and SpInQS (ANR-17-CE40-0011).}}
\date{\today}
\title{Roots of Kostlan polynomials: moments, strong Law of Large Numbers and Central Limit Theorem}
\begin{document}

\maketitle

\begin{abstract}
We study the number of real roots of a Kostlan random polynomial of degree $d$ in one variable. More generally, we are interested in the distribution of the counting measure of the set of real roots of such polynomials. We compute the asymptotics of the central moments of any order of these random variables, in the large degree limit. As a consequence, we prove that these quantities satisfy a strong Law of Large Numbers and a Central Limit Theorem. In particular, the real roots of a Kostlan polynomial almost surely equidistribute as the degree diverges. Moreover, the fluctuations of the counting measure of this random set around its mean converge in distribution to the Standard Gaussian White Noise. More generally, our results hold for the real zeros of a random real section of a line bundle of degree $d$ over a real projective curve in the complex Fubini--Study model.
\end{abstract}

\begin{description}
\item[Keywords:] Kostlan polynomials, Complex Fubini--Study model, Kac--Rice formula, Law of Large Numbers, Central Limit Theorem, Method of moments.
\item[Mathematics Subject Classification 2010:] 14P25, 32L05, 60F05, 60F15, 60G15, 60G57.
\end{description}


\tableofcontents


\section{Introduction}
\label{sec introduction}

\paragraph{Kostlan polynomials.}

A real \emph{Kostlan polynomial} of degree $d$ is a univariate random polynomial of the form $\sum_{k=0}^d a_k \sqrt{\binom{d}{k}}X^k$, where the coefficients $(a_k)_{0 \leq k \leq d}$ are independent $\mathcal{N}(0,1)$ random variables. Here and in the following we use the standard notation $\mathcal{N}(m,\sigma^2)$ for the real Gaussian distribution of mean $m$ and variance $\sigma^2$. These random polynomials are also known as \emph{elliptic polynomials} in the literature (see~\cite{ST2004} for example). The roots of such a polynomial form a random subset of $\R$ that we denote by $Z_d$. Kostlan proved (cf.~\cite{Kos1993}) that for all $d \in \N$, the average number of roots of this random polynomial is $\esp{\card(Z_d)} = d^\frac{1}{2}$, where $\card(Z_d)$ is the cardinality of $Z_d$. It was later proved by Dalmao (see~\cite{Dal2015}) that $\var{\card(Z_d)} \sim \sigma^2 d^\frac{1}{2}$ as $d \to +\infty$, where $\sigma$ is some explicit positive constant. Dalmao also proved that $\card(Z_d)$ satisfies a Central Limit Theorem as $d \to +\infty$.

In this paper, we study the higher moments of $\card(Z_d)$ in the large degree limit. Let $p \in \N$, we denote by $m_p(\card(Z_d))$ the $p$-th central moment of $\card(Z_d)$. A consequence of our main result (Theorem~\ref{thm main}) is that, as $d \to +\infty$, we have:
\begin{equation*}
m_p(\card(Z_d)) = \mu_p \sigma^p d^\frac{p}{4} + o(d^\frac{p}{4}),
\end{equation*}
where $\sigma$ is the constant appearing in Dalmao's variance estimate, and $(\mu_p)_{p \in \N}$ is the sequence of moments of the standard real Gaussian distribution $\mathcal{N}(0,1)$. This results allows us to prove a strong Law of Large Numbers: $d^{-\frac{1}{2}}\card(Z_d) \to 1$ almost surely. We also prove that $d^{-\frac{1}{2}}\card(Z_d)$ concentrates around~$1$ in probability, faster than any negative power of~$d$. Finally, we recover Dalmao's Central Limit Theorem by the method of moments. The original proof used the Wiener--Ito expansion of the number of roots. In fact, we improve this result by proving a Central Limit Theorem for the counting measure of $Z_d$ (see Theorem~\ref{thm central limit theorem} below).

Equivalently, one can define $Z_d$ as the set of zeros, on the real projective line $\R P^1$, of the homogeneous Kostlan polynomial $\sum_{k=0}^d a_k \sqrt{\binom{d}{k}} X^kY^{d-k}$, where $(a_k)_{0 \leq k \leq d}$ are independent $\mathcal{N}(0,1)$ variables. In this setting, an homogeneous Kostlan polynomial of degree $d$ is a standard Gaussian vector in the space of real global holomorphic sections of the line bundle $\mathcal{O}(d) \to \C P^1$, equipped with its natural $L^2$-inner product (see Section~\ref{subsec geometric setting}). Recall that $\mathcal{O}(d) = \mathcal{O}(1)^{\otimes d}$, where $\mathcal{O}(1)$ is the hyperplane line bundle over $\C P^1$. Then, $Z_d$ is the real zero set of this random section. In this paper, we study more generally the real zeros of random real sections of positive Hermitian line bundles over real algebraic curves.

\paragraph{Framework and background.}

Let us introduce our general framework. More details are given in Section~\ref{subsec geometric setting} below. Let $\X$ be a smooth complex projective manifold of dimension $1$, that is a smooth compact Riemann surface. Let $\E$ and $\L$ be holomorphic line bundles over~$\X$. We assume that $\X$, $\E$ and $\L$ are endowed with compatible real structures and that the real locus of $\X$ is not empty. We denote by $M$ this real locus, which is then a smooth closed (i.e.~compact without boundary) submanifold of $\X$ of real dimension~$1$.

Let $h_\E$ and $h_\L$ denote Hermitian metrics on $\E$ and $\L$ respectively, that are compatible with the real structures. We assume that $(\L,h_\L)$ has positive curvature $\omega$, so that $\L$ is ample and $\omega$ is a Kähler form on $\X$. The form $\omega$ induces a Riemannian metric $g$ on $\X$, hence on $M$. Let us denote by $\rmes{M}$ the arc-length measure on $M$ defined by $g$.

For all $d \in \N$, we denote by $\R \H$ the space of global real holomorphic sections of $\E \otimes \L^d \to \X$. Let $s \in \R \H$, we denote by $Z_s=s^{-1}(0)\cap M$ the real zero set of $s$. Since $s$ is holomorphic, if $s \neq 0$ its zeros are isolated and $Z_s$ is finite. In this case, we denote by $\nu_s$ the counting measure of $Z_s$, that is $\nu_s = \sum_{x \in Z_s} \delta_x$, where $\delta_x$ stands for the unit Dirac mass at $x$. For any $\phi \in \mathcal{C}^0(M)$, we denote by $\prsc{\nu_s}{\phi} = \sum_{x \in Z_s} \phi(x)$. Quantities of the form $\prsc{\nu_s}{\phi}$ are called the \emph{linear statistics} of $\nu_s$. Note that $\prsc{\nu_s}{\mathbf{1}} = \card(Z_s)$, where $\mathbf{1}$ is the constant unit function.

For any $d \in \N$, the space $\R \H$ is finite-dimensional, and its dimension can be computed by the Riemann--Roch Theorem. Moreover, the measure $\rmes{M}$ and the metrics $h_\E$ and $h_\L$ induce a Euclidean $L^2$-inner product on this space (see Equation~\eqref{eq def L2 inner product}). Let $s_d$ be a standard Gaussian vector in $\R \H$, see Section~\ref{subsec zeros of random real sections}. Then, $\nu_{s_d}$ is an almost surely well-defined random Radon measure on $M$. We denote by $Z_d = Z_{s_d}$ and by $\nu_d = \nu_{s_d}$ for simplicity. In this setting, the linear statistics of $\nu_d$ were studied in~\cite{Anc2021,GW2016,Let2016,LP2019}, among others. In particular, the exact asymptotics of their expected values and their variances are known.

\begin{thm}[Gayet--Welschinger]
\label{thm expectation}
For every $d \in \N$, let $s_d$ be a standard Gaussian vector in $\R \H$. Then the following holds as $d \to +\infty$:
\begin{equation*}
\forall \phi \in \mathcal{C}^0(M), \qquad \esp{\prsc{\nu_d}{\phi}} = d^\frac{1}{2} \left(\frac{1}{\pi}\int_M \phi \rmes{M}\right) + \Norm{\phi}_\infty O(d^{-\frac{1}{2}}),
\end{equation*}
where the error term $O(d^{-\frac{1}{2}})$ does not depend on $\phi$. That is: $d^{-\frac{1}{2}}\esp{\nu_d} = \frac{1}{\pi}\rmes{M} + O(d^{-1})$.
\end{thm}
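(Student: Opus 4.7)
The plan is to deduce Theorem~\ref{thm expectation} from the Kac--Rice formula combined with the near-diagonal asymptotics of the Bergman kernel of $\R \H$.

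First, since $s_d$ is a non-degenerate centered Gaussian section with smooth sample paths, the Kac--Rice formula gives, for every $\phi \in \mathcal{C}^0(M)$,
\begin{equation*}
\esp{\prsc{\nu_d}{\phi}} = \int_M \phi(x)\,\rho_d(x)\,\rmes{M}(x),
\end{equation*}
where $\rho_d$ is the first intensity of $\nu_d$. To compute $\rho_d(x)$ at $x \in M$, I would fix a real $g$-arc-length coordinate around $x$ and a real local holomorphic frame of $\E \otimes \L^d$ of unit pointwise norm at $x$; the section $s_d$ then reads as a centered real Gaussian process $f_d$, and a standard Gaussian conditioning argument yields
\begin{equation*}
\rho_d(x) = \frac{1}{\pi}\sqrt{\partial_u\partial_v \log E_d(u,v)\bigm|_{u=v=x}},
\end{equation*}
where $E_d(u,v) = \esp{f_d(u) f_d(v)}$. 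The mixed derivative on the diagonal is independent of the chosen real trivializations and coincides with the corresponding expression for the reproducing kernel $K_d$ of $\R \H$, so $\rho_d(x)$ is intrinsic.

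The second step is to control the $2$-jet of $K_d$ along the diagonal over $M$. The real kernel $K_d$ is the restriction to $M \times M$ of the Bergman kernel of the full complex space $H^0(\X, \E \otimes \L^d)$ read in a real frame, and the Tian--Zelditch--Catlin--Ma--Marinescu expansion, uniform on $\X$ and with arbitrarily many derivatives (as used in~\cite{Anc2019,GW2016,LP2019}), gives $K_d(x,x) = \frac{d}{\pi}\bigl(1 + O(d^{-1})\bigr)$, together with the convergence, after rescaling $u = x + t/\sqrt{d}$ and $v = x + s/\sqrt{d}$, of the normalized kernel to the flat Bargmann--Fock model attached to $\omega_x$. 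Restricted to the real axis this model is the explicit Gaussian kernel $e^{-(t-s)^2/2}$, up to the normalization fixed by the arc-length coordinate, whose $2$-jet plugged into the formula for $\rho_d$ produces
\begin{equation*}
\rho_d(x) = \frac{d^\frac{1}{2}}{\pi} + O(d^{-\frac{1}{2}}),
\end{equation*}
uniformly in $x \in M$ by compactness. Multiplying by $\phi$ and integrating against $\rmes{M}$ then yields the announced expansion with error $\Norm{\phi}_\infty O(d^{-\frac{1}{2}})$.

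The main obstacle is the second step: one needs to run the near-diagonal Bergman kernel expansion uniformly in $x \in M$ and with $\mathcal{C}^2$-control, and to identify the real reproducing kernel of $\R \H$ with the restriction of the complex Bergman kernel in a real frame. Both points are standard in the complex Fubini--Study framework considered here, so the computation ultimately reduces to a pointwise calculation with the Bargmann--Fock model restricted to $\R$. As a sanity check, in the Kostlan case $M = \R P^1$ has length $\pi$ for the metric induced by $\omega$, and the theorem recovers $\esp{\card(Z_d)} = d^\frac{1}{2}$ at leading order, in agreement with Kostlan's formula.
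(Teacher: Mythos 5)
The paper does not prove this statement: it is quoted as a known result, pointing to [GW2016, Theorem~1.2] (with $n=1$, $i=0$) and [Let2016] for the twisted case. Your sketch correctly reproduces the argument used in those references: Kac--Rice gives $\rho_d(x)=\tfrac{1}{\pi}\sqrt{\partial_u\partial_v\log E_d(u,v)\mid_{u=v=x}}$, which is frame-independent; the two-sided near-diagonal Bergman-kernel expansion of Tian--Zelditch--Catlin--Ma--Marinescu, uniform on $M$ and with $\mathcal{C}^2$ control, gives $\partial_u\partial_v\log E_d\mid_{\mathrm{diag}}=d(1+O(d^{-1}))$ after identification with the Bargmann--Fock model restricted to the real axis; integrating the resulting $\rho_d(x)=\tfrac{\sqrt d}{\pi}+O(d^{-1/2})$ yields the claim. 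So this is the same route as the cited sources, not a new one.
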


Theorem~\ref{thm expectation} is~\cite[Theorem~1.2]{GW2016} with $n=1$ and $i=0$. See also~\cite[Theorem~1.3]{Let2016} when $\E$ is not trivial. In~\cite{Let2016}, the case the linear statistics is discussed in Section~5.3.

We use the following notation for the central moments of $\nu_d$.

\begin{dfn}[Central moments of $\nu_d$]
\label{def m p nu d}
Let $d \in \N$ and let $s_d$ be a standard Gaussian vector in $\R \H$. Let $\nu_d$ denote the counting measure of the real zero set of $s_d$. For all $p \in \N^*$, for all $\phi_1,\dots,\phi_p \in \mathcal{C}^0(M)$ we denote:
\begin{equation*}
m_p(\nu_d)(\phi_1,\dots,\phi_p) = \esp{\prod_{i=1}^p \left(\prsc{\nu_d}{\phi_i}-\rule{0pt}{10pt}\esp{\prsc{\nu_d}{\phi_i}}\right)}.
\end{equation*}
For all $\phi \in \mathcal{C}^0(M)$, we denote by $m_p(\prsc{\nu_d}{\phi}) = m_p(\nu_d)(\phi,\dots,\phi)$ the $p$-th central moment of $\prsc{\nu_d}{\phi}$. As above, we denote the $p$-th central moment of $\card(Z_d)$ by $m_p(\card(Z_d)) = m_p(\nu_d)(\mathbf{1},\dots,\mathbf{1})$.
\end{dfn}

Of course, $m_p(\nu_d)$ is only interesting for $p \geq 2$. For $p=2$, the bilinear form $m_2(\nu_d)$ encodes the covariance structure of the linear statistics of $\nu_d$. In particular, $m_2(\nu_d)(\phi,\phi) = \var{\prsc{\nu_d}{\phi}}$ for all $\phi \in \mathcal{C}^0(M)$. The large degree asymptotics of $m_2(\nu_d)$ has been studied in~\cite[Theorem~1.6]{LP2019}.

\begin{thm}[Letendre--Puchol]
\label{thm variance}
For all $d \in \N$, let $s_d \in \R\H$ be a standard Gaussian vector. There exists $\sigma >0$ such that, for all $\phi_1$ and $\phi_2 \in \mathcal{C}^0(M)$, the following holds as $d \to +\infty$:
\begin{equation*}
m_2(\nu_d)(\phi_1,\phi_2) =d^\frac{1}{2} \sigma^2\int_M \phi_1 \phi_2 \rmes{M} + o(d^{\frac{1}{2}}).
\end{equation*}
\end{thm}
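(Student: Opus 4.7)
The natural route is via a Kac--Rice formula for the two-point function. Expanding
\begin{equation*}
\prsc{\nu_d}{\phi_1}\prsc{\nu_d}{\phi_2} = \prsc{\nu_d}{\phi_1\phi_2} + \sum_{\substack{x,y \in Z_d \\ x \neq y}} \phi_1(x)\phi_2(y),
\end{equation*}
taking expectations and subtracting the product of means yields
\begin{equation*}
m_2(\nu_d)(\phi_1,\phi_2) = \esp{\prsc{\nu_d}{\phi_1\phi_2}} + \iint_{M \times M} \phi_1(x)\phi_2(y)\bigl[D_d(x,y) - \rho_d(x)\rho_d(y)\bigr]\rmes{M}(x)\rmes{M}(y),
\end{equation*}
where $\rho_d$ and $D_d$ denote the one- and two-point densities of the zero process, obtained by Kac--Rice as explicit rational expressions in the covariance kernel of $s_d$ and its first derivatives. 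By Theorem~\ref{thm expectation}, the first term already contributes $\frac{d^{1/2}}{\pi}\int_M \phi_1\phi_2\rmes{M} + O(d^{-1/2})$.

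The heart of the matter is therefore the double integral. The plan is to exploit two well-known properties of the Bergman kernel $B_d$ of $\E\otimes\L^d$: a full asymptotic expansion in normal coordinates and adapted frames on scale $d^{-1/2}$ around the diagonal, whose leading term is a universal Bargmann--Fock kernel, and an exponential off-diagonal decay of the form $\Norm{B_d(x,y)} \leq C d\, \exp(-c\sqrt{d}\,\mathrm{dist}(x,y))$. Together these imply that $D_d(x,y) - \rho_d(x)\rho_d(y)$ is negligible outside a tube of width $d^{-\frac{1}{2}+\eta}$ around the diagonal, and that on this tube the change of variables $y = \exp_x(u/\sqrt{d})$, $u \in \R$, allows passage to a scaling limit.

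After rescaling, the Jacobian contributes a factor $d^{-1/2}$, while the two gradients that enter the Kac--Rice density produce a factor $d$, so the double integral is of order $d^{1/2}$. The bracket converges to a universal function $\K(u)$ of $u$ alone, built from the Bargmann--Fock kernel and its derivatives; a uniform domination, combined with Fubini and dominated convergence, then yields
\begin{equation*}
\iint_{M \times M}\phi_1(x)\phi_2(y)\bigl[D_d(x,y) - \rho_d(x)\rho_d(y)\bigr]\rmes{M}(x)\rmes{M}(y) = d^\frac{1}{2}\left(\int_\R \K(u)\dx u\right)\int_M \phi_1\phi_2\rmes{M} + o\bigl(d^\frac{1}{2}\bigr).
\end{equation*}
Setting $\sigma^2 := \tfrac{1}{\pi} + \int_\R \K(u)\dx u$ gives the stated equivalent. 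The positivity of $\sigma^2$ can then be verified in the model Kostlan case on $\R P^1$, where $\sigma^2$ recovers the explicit constant of Dalmao.

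The main obstacle is obtaining a domination of the rescaled integrand that is uniform in $d$ and integrable in $u$. The difficulty is concentrated near $u = 0$: the Kac--Rice density is obtained after inverting the Gaussian covariance of $(s_d(x),s_d(y))$, which degenerates on the diagonal, so the cancellation between $D_d(x,y)$ and $\rho_d(x)\rho_d(y)$ at small $u$ must be extracted by pushing the near-diagonal expansion of $B_d$ to a few additional orders. Keeping track of these cancellations while simultaneously using the exponential decay for large $u$ is what makes the whole estimate close.
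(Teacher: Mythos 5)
The paper does not prove Theorem~\ref{thm variance}; it imports it from \cite[Theorem~1.6]{LP2019}, and the expression and positivity of $\sigma$ are traced back to \cite{Dal2015} in Remark~\ref{rems thm variance}. Your sketch follows the same route as that cited source: split $m_2(\nu_d)(\phi_1,\phi_2)$ via Kac--Rice into the diagonal contribution $\esp{\prsc{\nu_d}{\phi_1\phi_2}}$ plus a pair-correlation double integral against $\mathcal{R}^2_d(x,y)-\mathcal{R}^1_d(x)\mathcal{R}^1_d(y)$, use the exponential off-diagonal decay of the normalized Bergman kernel to localize to a tube of width $O(d^{-1/2}\ln d)$ around the diagonal, rescale $y=\exp_x(u/\sqrt{d})$, and extract a universal limit $\int_{\R}\K(u)\dx u$ from the Bargmann--Fock scaling limit. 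The algebraic decomposition and the power count ($d$ from the density, $d^{-1/2}$ from the Jacobian) are correct, so this is the right strategy.

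The gap is exactly what you flag as ``the main obstacle,'' and it is not a minor verification but the substance of the proof. To run dominated convergence you need a bound on the rescaled pair-correlation difference that is simultaneously uniform in $d$ and $x$, integrable in $u$, and consistent with the pointwise convergence to $\K(u)$. Near $u=0$ the Kac--Rice expression for $\mathcal{R}^2_d$ is of indeterminate form because the Gaussian covariance of $(s_d(x),s_d(y))$ degenerates on the diagonal; the fact that it extends continuously by zero with a uniform $O(d)$ bound is precisely Proposition~\ref{prop bounded density}, a theorem of \cite{Anc2019} that this paper cites rather than reproves. In \cite{LP2019} the required control is obtained by expanding the near-diagonal Bergman kernel to enough orders to expose the cancellation explicitly and then matching it against the exponential decay regime. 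Your proposal names this difficulty but does not resolve it, so as written it is an outline, not a proof. One further point worth making explicit: deducing $\sigma>0$ from Dalmao's Kostlan-case computation uses that $\int_\R\K(u)\dx u$ is universal, i.e.\ independent of $(\X,\E,\L,h_\E,h_\L)$; this is a consequence of the universality of the Bargmann--Fock scaling limit that you invoke, but it should be stated since without it the appeal to \cite{Dal2015} only settles one example.
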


\begin{rems}
\label{rems thm variance}
\begin{itemize}
\item In fact, $\sigma = \left(\frac{1+\I_{1,1}}{\pi}\right)^\frac{1}{2}$, where $\I_{1,1}$ is the constant appearing in~\cite[Theorem~1.6]{LP2019}. Since~$\sigma$ is universal, it is the same as the one appearing in Dalmao's variance estimate~\cite[Theorem~1.1]{Dal2015}. An integral expression of $\sigma$ is given by~\cite[Proposition~3.1]{Dal2015}. The positivity of $\sigma$ is non-trivial and is given by~\cite[Corollary~1.2]{Dal2015}. See also~\cite[Theorem~1.8]{LP2019}.
\item One can understand how the error term $o(d^{\frac{1}{2}})$ in Theorem~\ref{thm variance} depends on $(\phi_1,\phi_2)$ (see~\cite[Theorem~1.6]{LP2019}).
\end{itemize}
\end{rems}

In~\cite{Anc2021}, Ancona derived a two terms asymptotic expansion of the non-central moments of the linear statistics of $\nu_d$. As a consequence, he proved the following (cf.~\cite[Theorem~0.5]{Anc2021}). 

\begin{thm}[Ancona]
\label{thm non-central moments}
For all $d \in \N$, let $s_d \in \R\H$ be a standard Gaussian vector. Let $p \geq 3$, for all $\phi_1,\dots,\phi_p \in \mathcal{C}^0(M)$ we have: $m_p(\nu_d)(\phi_1,\dots,\phi_p)= o(d^\frac{p-1}{2})$.
\end{thm}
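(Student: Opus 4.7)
The plan is to apply the Kac--Rice formula to express moments of the linear statistics as sums (indexed by set partitions) of integrals of the correlation densities $\rho_k^d$ of $Z_d$, then to use the cumulant expansion of central moments---in which all disconnected contributions cancel automatically---and finally to bound the remaining integrals via the off-diagonal decay of the Bergman kernel of $\E \otimes \L^d$.

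By iterating Kac--Rice and accounting for the diagonal collisions that appear when an atomic measure is multiplied with itself, one has for every $k \geq 1$ and all $\psi_1, \dots, \psi_k \in \mathcal{C}^0(M)$:
\begin{equation*}
\esp{\prod_{j=1}^k \prsc{\nu_d}{\psi_j}} = \sum_{\pi \vdash \{1,\dots,k\}} \int_{M^{|\pi|}} \rho_{|\pi|}^d(\underline{y}) \prod_{B \in \pi} \prod_{j \in B} \psi_j(y_B) \,\rmes{M^{|\pi|}}(\underline{y}),
\end{equation*}
where $\rho_\ell^d$ is expressed as a conditional Gaussian integral involving the joint law of $(s_d, \nabla s_d)$ at $\ell$ points of $M$. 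Combined with the standard identity
\begin{equation*}
m_p(\nu_d)(\phi_1,\dots,\phi_p) = \sum_{\substack{\pi\,\vdash\,\{1,\dots,p\}\\ \text{singleton-free}}} \prod_{B \in \pi} \kappa_{|B|}\bigl(\prsc{\nu_d}{\phi_i},\, i \in B\bigr),
\end{equation*}
which expresses centered moments as sums of products of joint cumulants indexed by partitions with no singleton block, the problem reduces to bounding the joint cumulants of linear statistics of $\nu_d$.

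Each cumulant $\kappa_\ell$ is in turn a finite sum (indexed by partitions of $\{1,\dots,\ell\}$) of integrals of the truncated correlation densities $T_k^d$, the connected pieces of $\rho_k^d$. The asymptotic expansion of the Bergman kernel of $\E \otimes \L^d$ together with its derivatives of order at most two---of the same nature as the estimates used to prove Theorem~\ref{thm variance}---yields a uniform pointwise bound $|T_k^d| = O(d^{k/2})$ together with rapid decay $|T_k^d(y_1, \dots, y_k)| = O(d^{-N})$ for every $N$ as soon as some pair $y_i, y_j$ is separated in the Riemannian distance on $M$ by at least $C d^{-1/2}\log d$, for a suitable $C > 0$. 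Since $\dim_\R M = 1$, the near-diagonal tube in $M^k$ outside of which the decay holds has $\rmes{M^k}$-measure $O\bigl((d^{-1/2} \log d)^{k-1}\bigr)$, whence $\|T_k^d\|_{L^1(M^k)} = O\bigl(d^{1/2} (\log d)^{k-1}\bigr)$ and $\kappa_\ell = O\bigl(d^{1/2}(\log d)^{\ell - 1}\bigr)$.

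Substituting into the cumulant decomposition: a singleton-free partition with $r$ blocks satisfies $r \leq \lfloor p/2 \rfloor$, hence contributes $O\bigl(d^{r/2}(\log d)^{p-r}\bigr)$ to $m_p(\nu_d)(\phi_1, \dots, \phi_p)$. Summing over the finitely many such partitions yields a bound $O\bigl(d^{\lfloor p/2 \rfloor / 2}(\log d)^p\bigr) = O\bigl(d^{p/4}(\log d)^p\bigr)$, which is $o\bigl(d^{(p-1)/2}\bigr)$ for $p \geq 3$. The main technical obstacle is the uniform off-diagonal decay estimate for $T_k^d$, which rests on the Ma--Marinescu asymptotic expansion of the Bergman kernel and its first and second derivatives in the real setting; this is the technical core of~\cite{Anc2019}.
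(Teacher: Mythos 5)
Your cumulant route is correct and differs genuinely from the one in the literature. The paper cites this result from Ancona's \cite{Anc2019}, where it drops out of a two-term asymptotic expansion of the \emph{non-central} moments; the paper itself establishes the stronger Theorem~\ref{thm main} by writing $m_p(\nu_d)$ as a sum over $\pa_p$ of integrals of the partially M\"obius-inverted densities $\D^\I_d$ over cluster regions $M^{\I,p}_{\J,d}$, and showing by hand that the regions in which one point is isolated contribute only $O(d^{\frac{p}{4}-1})$ (Lemma~\ref{lem partitions with a singleton}, Corollary~\ref{cor partitions with a singleton}). In your version this cancellation is automatic: a centered variable has vanishing first cumulant, so only singleton-free partitions survive, which is the main gain of the cumulant packaging. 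The main caveat is that your stated pointwise decay for the truncated correlations is slightly too strong: $T_k^d$ is \emph{not} small as soon as \emph{some} pair of points is at distance $\geq b\,d^{-1/2}\log d$ (a chain of points each close to the next but with endpoints far apart defeats this); it is small only when the configuration can be split into two nonempty groups with \emph{all} cross-group pairs far apart, i.e.\ when its clustering graph at that scale (Definition~\ref{def Gdp}) is disconnected. The imprecision is harmless here, because the set of connected configurations in $M^k$ also has $\rmes{M}^k$-measure $O\!\left((d^{-1/2}\log d)^{k-1}\right)$---connectivity forces every point to lie within $(k-1)b\,d^{-1/2}\log d$ of every other---and the uniform bound $|T_k^d|=O(d^{k/2})$ applies there, so your $L^1$ estimate and hence $\kappa_\ell = O\!\left(d^{1/2}(\log d)^{\ell-1}\right)$ and the final bound $m_p(\nu_d)=O\!\left(d^{p/4}(\log d)^p\right)=o(d^{(p-1)/2})$ all stand. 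Finally, note that the decay of $T_k^d$ across a disconnection is not immediate from the Bergman kernel estimates alone: it requires the factorisation $\mathcal{R}^k_d(x)=\mathcal{R}^{|A|}_d(\underline{x}_A)\mathcal{R}^{|B|}_d(\underline{x}_B)+O(d^{\frac{k}{2}-\frac{p}{4}-1})$ (Proposition~\ref{prop multiplicativity Rk}, i.e.\ \cite[Proposition~3.2]{Anc2019}) fed through the M\"obius inversion defining $T_k^d$; you are right to flag this as the technical core and defer it to \cite{Anc2019}.
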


\begin{rem}
\label{rem line bundle E}
In Ancona's paper the line bundle $\E$ is trivial. The results of~\cite{Anc2021} rely on precise estimates for the Bergman kernel of $\L^d$. These estimates still hold if we replace $\L^d$ by $\E \otimes \L^d$, where $\E$ is any fixed real Hermitian line bundle (see~\cite[Theorem~4.2.1]{MM2007}). Thus, all the results in~\cite{Anc2021} are still valid for random real sections of $\E \otimes \L^d \to \X$.
\end{rem}

\paragraph{Main results.}

In this paper, we prove a strong Law of Large Numbers (Theorem~\ref{thm law of large numbers}) and a Central Limit Theorem (Theorem~\ref{thm central limit theorem}) for the linear statistics of the random measures $(\nu_d)_{d \in \N}$ defined above. These results are deduced from our main result (Theorem~\ref{thm main}), which gives the precise asymptotics of the central moments $(m_p(\nu_d))_{p \geq 3}$ (cf.~Definition~\ref{def m p nu d}), as $d \to +\infty$.

Recall that all the results of the present paper apply when $Z_d$ is the set of real roots of a Kostlan polynomial of degree $d$. If one considers homogeneous Kostlan polynomials in two variables, then $Z_d \subset M = \R P^1$. Since $\R P^1$ is obtained from the Euclidean unit circle $\S^1$ by identifying antipodal points, it is a circle of length $\pi$. In this case, $\rmes{M}$ is the Lebesgue measure on this circle, normalized so that $\vol{M}=\pi$. If one wants to consider the original Kostlan polynomials in one variable, then $Z_d \subset M = \R$, where $\R$ is seen as a standard affine chart in $\R P^1$. In this case, the measure $\rmes{M}$ admits the density $t \mapsto (1+t^2)^{-1}$ with respect to the Lebesgue of $\R$, and $\vol{M}=\pi$ once again.

\begin{thm}[Strong Law of Large Numbers]
\label{thm law of large numbers}
Let $\X$ be a real projective curve whose real locus~$M$ is non-empty. Let $\E \to \X$ and $\L \to \X$ be real Hermitian line bundles such that $\L$ is positive. Let $(s_d)_{d \geq 1}$ be a sequence of standard Gaussian vectors in $\prod_{d \geq 1}\R \H$. For any $d \geq 1$, let $Z_d$ denote the real zero set of $s_d$ and let $\nu_d$ denote the counting measure of $Z_d$.

Then, almost surely, $d^{-\frac{1}{2}}\nu_d \xrightarrow[d \to +\infty]{} \frac{1}{\pi}\rmes{M}$ in the weak-$*$ sense. That is, almost surely:
\begin{equation*}
\forall \phi \in \mathcal{C}^0(M), \qquad d^{-\frac{1}{2}}\prsc{\nu_d}{\phi} \xrightarrow[d \to +\infty]{} \frac{1}{\pi} \int_M \phi \rmes{M}.
\end{equation*}
In particular, almost surely, $d^{-\frac{1}{2}}\card(Z_d) \xrightarrow[d \to +\infty]{} \frac{1}{\pi}\vol{M}$ and $\card(Z_d)^{-1}\nu_d \xrightarrow[d \to +\infty]{\text{weak}-*} \frac{\rmes{M}}{\Vol(M)}$.
\end{thm}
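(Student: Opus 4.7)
The plan is to deduce this strong Law of Large Numbers from the central moment asymptotics of the main result (Theorem~\ref{thm main}) by a standard Borel--Cantelli argument, and then to upgrade the resulting pointwise convergence into weak-$*$ convergence by separability of $\mathcal{C}^0(M)$.

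First I would fix $\phi \in \mathcal{C}^0(M)$ and prove that $d^{-\frac{1}{2}}\prsc{\nu_d}{\phi} \to \frac{1}{\pi}\int_M \phi \rmes{M}$ almost surely. By Theorem~\ref{thm expectation}, $d^{-\frac{1}{2}}\esp{\prsc{\nu_d}{\phi}}$ converges to this limit, so it is enough to show that the centered variable $X_d = d^{-\frac{1}{2}}\left(\prsc{\nu_d}{\phi} - \esp{\prsc{\nu_d}{\phi}}\right)$ tends to $0$ almost surely. For any even $p \in \N^*$, Theorem~\ref{thm main} applied to $\phi_1 = \dots = \phi_p = \phi$ gives $\esp{X_d^p} = d^{-\frac{p}{2}} m_p(\prsc{\nu_d}{\phi}) = O(d^{-\frac{p}{4}})$. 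Markov's inequality then yields $\P(\norm{X_d} > \epsilon) \leq \epsilon^{-p} O(d^{-\frac{p}{4}})$ for every $\epsilon > 0$, which is summable as soon as $p \geq 5$. A direct application of the Borel--Cantelli lemma gives $X_d \to 0$ almost surely.

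Second, I would globalize in $\phi$. Since $M$ is a compact metrizable space, $\mathcal{C}^0(M)$ is separable for the uniform norm; let $(\phi_n)_{n \in \N}$ be a countable dense subset containing the constant function $\mathbf{1}$. By the previous step applied to each $\phi_n$ and a countable intersection of full-measure events, there is an event $\Omega_0$ of probability $1$ on which $d^{-\frac{1}{2}}\prsc{\nu_d}{\phi_n} \to \frac{1}{\pi}\int_M \phi_n \rmes{M}$ for every $n \in \N$ simultaneously. In particular, taking $\phi_n = \mathbf{1}$ shows that on $\Omega_0$ the sequence $(d^{-\frac{1}{2}}\card(Z_d))_{d \geq 1}$ is bounded, say by some random constant~$C$.

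Finally, fix $\phi \in \mathcal{C}^0(M)$ and $\epsilon > 0$, pick $n$ such that $\Norm{\phi - \phi_n}_\infty \leq \epsilon$, and on $\Omega_0$ write
\begin{equation*}
\left\lvert d^{-\frac{1}{2}}\prsc{\nu_d}{\phi} - \tfrac{1}{\pi}\textstyle\int_M \phi \rmes{M}\right\rvert \leq \left\lvert d^{-\frac{1}{2}}\prsc{\nu_d}{\phi_n} - \tfrac{1}{\pi}\textstyle\int_M \phi_n \rmes{M}\right\rvert + \Norm{\phi - \phi_n}_\infty \left(d^{-\frac{1}{2}}\card(Z_d) + \tfrac{1}{\pi}\vol{M}\right).
\end{equation*}
Taking $\limsup_{d \to +\infty}$ kills the first term and bounds the rest by $\epsilon\left(C + \frac{1}{\pi}\vol{M}\right)$; since $\epsilon > 0$ is arbitrary, the weak-$*$ convergence holds on $\Omega_0$. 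There is no real obstacle in this argument once Theorem~\ref{thm main} is at our disposal; the whole point is that the central moment estimate $m_p(\prsc{\nu_d}{\phi}) = O(d^{\frac{p}{4}})$ decays fast enough (much faster than the bound $o(d^{\frac{p-1}{2}})$ from Theorem~\ref{thm non-central moments}) to make Borel--Cantelli applicable, so the real work lies entirely in establishing Theorem~\ref{thm main}.
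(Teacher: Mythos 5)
Your proposal is correct and follows essentially the same strategy as the paper: apply the central moment estimate of Theorem~\ref{thm main} with $p=6$ to deduce almost-sure convergence of $d^{-1/2}\prsc{\nu_d}{\phi}$ for each fixed $\phi$, then upgrade to weak-$*$ convergence via separability of $\mathcal{C}^0(M)$ and the uniform bound coming from $\phi = \mathbf{1}$. The only cosmetic difference is that you invoke Markov plus Borel--Cantelli explicitly, whereas the paper directly observes that $\sum_d \esp{X_d^6} < \infty$ forces $\sum_d X_d^6 < \infty$ almost surely; these are two phrasings of the same argument.
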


\begin{dfn}[Standard Gaussian White Noise]
\label{def W}
The \emph{Standard Gaussian White Noise} on $M$ is a random Gaussian generalized function $W \in \mathcal{D}'(M)$, whose distribution is characterized by the following facts, where $\prsc{\cdot}{\cdot}_{(\D',\mathcal{C}^\infty)}$ denotes the usual duality pairing between $\D'(M)$ and $\mathcal{C}^\infty(M)$:
\begin{itemize}
\item for any $\phi \in \mathcal{C}^\infty(M)$, the variable $\prsc{W}{\phi}_{(\D',\mathcal{C}^\infty)}$ is a real centered Gaussian;
\item for all $\phi_1,\phi_2 \in \mathcal{C}^\infty(M)$, we have: $\esp{\prsc{W}{\phi_1}_{(\D',\mathcal{C}^\infty)}\prsc{W}{\phi_2}_{(\D',\mathcal{C}^\infty)}}= \displaystyle\int_M \phi_1 \phi_2 \rmes{M}$.
\end{itemize}
In particular, we have $\prsc{W}{\phi}_{(\D',\mathcal{C}^\infty)} \sim \mathcal{N}(0,\Norm{\phi}^2_2)$, where $\Norm{\phi}_2$ stands for the $L^2$-norm of $\phi$.
\end{dfn}

Here and in the following, we avoid using the term ``distribution'' when talking about elements of $\D'(M)$ and rather use the term ``generalized function''. This is to avoid any possible confusion with the distribution of a random variable.

\begin{thm}[Central Limit Theorem]
\label{thm central limit theorem}
Let $\X$ be a real projective curve whose real locus~$M$ is non-empty. Let $\E \to \X$ and $\L \to \X$ be real Hermitian line bundles such that $\L$ is positive. Let $(s_d)_{d \geq 1}$ be a sequence of standard Gaussian vectors in $\prod_{d \geq 1}\R \H$. For any $d \geq 1$, let $Z_d$ denote the real zero set of $s_d$ and let $\nu_d$ denote the counting measure of $Z_d$.

Then, the following holds in distribution, in the space $\mathcal{D}'(M)$ of generalized functions on $M$:
\begin{equation*}
\frac{1}{d^\frac{1}{4}\sigma}\left(\nu_d - \frac{d^\frac{1}{2}}{\pi}\rmes{M}\right) \xrightarrow[d \to +\infty]{} W,
\end{equation*}
where $W$ denotes the Standard Gaussian White Noise on $M$ (see~Definition~\ref{def W}).

In particular, for any test-functions $\phi_1,\dots,\phi_k \in \mathcal{C}^\infty(M)$, the random vector:
\begin{equation*}
\frac{1}{d^\frac{1}{4}\sigma}\left(\left(\prsc{\nu_d}{\phi_1} - \frac{d^\frac{1}{2}}{\pi}\int_M \phi_1\rmes{M}\right), \dots, \left(\prsc{\nu_d}{\phi_k} - \frac{d^\frac{1}{2}}{\pi}\int_M \phi_k\rmes{M}\right)\right)
\end{equation*}
converges in distribution to a centered Gaussian vector of variance matrix $\begin{pmatrix}
\displaystyle\int_M \phi_i \phi_j \rmes{M}
\end{pmatrix}_{1 \leq i,j \leq k}$.

Moreover, for all $\phi \in \mathcal{C}^0(M)$,
\begin{equation*}
\frac{1}{d^\frac{1}{4}\sigma}\left(\prsc{\nu_d}{\phi} - \frac{d^\frac{1}{2}}{\pi}\int_M \phi\rmes{M}\right) \xrightarrow[d \to +\infty]{} \mathcal{N}(0,\Norm{\phi}_2^2)
\end{equation*}
in distribution. In particular, $\left(d^\frac{1}{4}\sigma\vol{M}^{\frac{1}{2}}\right)^{-1}\left(\card(Z_d) - \frac{d^\frac{1}{2}}{\pi} \vol{M}\right) \xrightarrow[d \to +\infty]{} \mathcal{N}(0,1)$.
\end{thm}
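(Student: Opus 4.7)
The strategy is the method of moments, with Theorem~\ref{thm main} as the main input, followed by a tightness argument to upgrade convergence of finite-dimensional marginals to convergence in $\mathcal{D}'(M)$.

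First, set $Y_d(\phi) := d^{-1/4}\sigma^{-1}\bigl(\prsc{\nu_d}{\phi} - \esp{\prsc{\nu_d}{\phi}}\bigr)$ for $\phi \in \mathcal{C}^0(M)$. I expect that Theorem~\ref{thm main} provides, for every $p \geq 1$ and every $\phi_1,\ldots,\phi_p \in \mathcal{C}^0(M)$, the asymptotic
\begin{equation*}
\esp{Y_d(\phi_1)\cdots Y_d(\phi_p)} \xrightarrow[d \to +\infty]{} \sum_{P} \prod_{\{i,j\} \in P} \int_M \phi_i \phi_j \rmes{M},
\end{equation*}
where the sum runs over all perfect pairings $P$ of $\{1,\dots,p\}$ (empty, hence zero, when $p$ is odd). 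By Wick's formula, the right-hand side is exactly the $p$-th joint moment of the centered Gaussian process with covariance $(\phi,\psi)\mapsto\int_M\phi\psi\rmes{M}$, i.e.\ of the Standard Gaussian White Noise $W$ of Definition~\ref{def W}. Since a Gaussian law is determined by its moments (Carleman's condition holds trivially), the method of moments yields $Y_d(\phi)\xrightarrow[d\to +\infty]{}\mathcal{N}(0,\Norm{\phi}_2^2)$ in distribution for each $\phi\in\mathcal{C}^0(M)$. Joint convergence of $(Y_d(\phi_1),\dots,Y_d(\phi_k))$ to the appropriate Gaussian vector then follows from the Cramer--Wold device combined with the linearity $\sum_i\alpha_iY_d(\phi_i)=Y_d\bigl(\sum_i\alpha_i\phi_i\bigr)$.

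To promote this finite-dimensional convergence into convergence of $Y_d$ towards $W$ in $\mathcal{D}'(M)$, I would invoke Mitoma's theorem: since $M$ is a smooth compact manifold, $\mathcal{C}^\infty(M)$ is a nuclear Fréchet space, so tightness of the sequence $(Y_d)_{d\geq 1}$ in $\mathcal{D}'(M)$ reduces to tightness of the real-valued families $(Y_d(\phi))_{d\geq 1}$ for each fixed $\phi\in\mathcal{C}^\infty(M)$. The latter is immediate from Theorem~\ref{thm variance}, since $\var{Y_d(\phi)}=\sigma^{-2}d^{-1/2}\var{\prsc{\nu_d}{\phi}}$ stays bounded as $d\to +\infty$. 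Combining tightness with the convergence of finite-dimensional marginals gives $Y_d\to W$ in distribution in $\mathcal{D}'(M)$, and the corollaries stated in the theorem follow by evaluating against fixed test functions (the $\mathcal{C}^0$ case being already handled by the one-dimensional method-of-moments argument above).

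The genuine analytic work lives entirely inside Theorem~\ref{thm main}; once that is granted, the three steps above are largely formal. The main obstacle in the present proof is thus the bookkeeping check that the asymptotic provided by Theorem~\ref{thm main} really picks up all Wick pairings, and only those, at leading order $d^{p/4}$, so that the limiting moments match those of $W$ exactly; a secondary point is verifying the mild measurability hypotheses under which Mitoma's theorem applies in the functional setting we work in.
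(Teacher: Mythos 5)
Your proposal is correct, and the first step (method of moments for a fixed $\phi$, plus Cram\'er--Wold for the joint CLT) is the same as the paper's, but your route from finite-dimensional convergence to convergence in $\mathcal{D}'(M)$ differs. The paper argues through characteristic functionals: it defines $T_d = (d^{1/4}\sigma)^{-1}(\nu_d - d^{1/2}\pi^{-1}\rmes{M})$, observes that the one-dimensional CLT already gives pointwise convergence $\chi_d(\phi) = \esp{e^{i\prsc{T_d}{\phi}}} \to e^{-\frac{1}{2}\Norm{\phi}_2^2}$, and invokes the L\'evy--Fernique theorem to conclude $T_d \to W$ in $\mathcal{D}'(M)$; the multi-test-function CLT is then \emph{derived} from this by the Continuous Mapping Theorem, rather than being an input. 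You instead invoke Mitoma's theorem: variance bounds from Theorem~\ref{thm variance} give tightness of $(Y_d(\phi))_d$ for each $\phi\in\mathcal{C}^\infty(M)$, Mitoma upgrades this (using nuclearity of $\mathcal{C}^\infty(M)$) to tightness of $(Y_d)$ in $\mathcal{D}'(M)$, and then convergence of finite-dimensional marginals identifies the limit as $W$. Both approaches exploit nuclearity in an essential way and require essentially the same inputs; the L\'evy--Fernique route is marginally more economical because it bypasses the tightness step and only needs the one-dimensional CLT, whereas yours additionally requires the Cram\'er--Wold step \emph{before} the functional statement. Two small points you should make explicit to fully close the argument: (i) you center $Y_d(\phi)$ at the exact mean $\esp{\prsc{\nu_d}{\phi}}$ while the theorem is stated with the asymptotic mean $d^{1/2}\pi^{-1}\int_M\phi\rmes{M}$, and one must invoke Theorem~\ref{thm expectation} to check that the difference, divided by $d^{1/4}\sigma$, is $O(d^{-3/4})\to 0$ so the two normalizations have the same limit; (ii) you should remark that the Borel $\sigma$-algebra on $\mathcal{D}'(M)$ is generated by the evaluation maps, so tightness plus convergence of finite-dimensional marginals does uniquely pin down the limit. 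Neither is a genuine gap.
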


Before stating our main result, we need to introduce some additional notations.

\begin{dfn}[Moments of the standard Gaussian]
\label{def mu p}
For all $p \in \N$, we denote by $\mu_p$ the $p$-th moment of the standard real Gaussian distribution. Recall that, for all $p \in \N$, we have $\mu_{2p} = \frac{(2p)!}{2^p p!}$ and $\mu_{2p+1}=0$.
\end{dfn}

\begin{dfn}[Partitions]
\label{def partitions}
Let $A$ be a finite set, a \emph{partition} of $A$ is a family $\I$ of non-empty disjoint subsets of $A$ such that $\bigsqcup_{I \in \I} I =A$. We denote by $\pa_A$ (resp.~$\pa_k$) the set of partitions of~$A$ (resp.~$\{1,\dots,k\}$). A \emph{partition into pairs} of $A$ is a partition $\I \in \pa_A$ such that $\card(I)=2$ for all $I \in \I$. We denote by $\pp_A$, (resp.~$\pp_k$) the set of partitions into pairs of $A$ (resp.~$\{1,\dots,k\}$). Note that $\pa_\emptyset = \{ \emptyset \} = \pp_\emptyset$.
\end{dfn}

\begin{thm}[Central moments asymptotics]
\label{thm main}
Let $\X$ be a real projective curve whose real locus~$M$ is non-empty. Let $\E \to \X$ and $\L \to \X$ be real Hermitian line bundles such that $\L$ is positive. For all $d \in \N$, let $s_d \in \R\H$ be a standard Gaussian vector, let $Z_d$ denote the real zero set of $s_d$ and let $\nu_d$ denote its counting measure. For all $p \geq 3$, for all $\phi_1,\dots,\phi_p \in \mathcal{C}^0(M)$, the following holds as $d \to +\infty$:
\begin{align*}
m_p(\nu_d)(\phi_1,\dots,\phi_p) &= \sum_{\I \in \pp_p} \prod_{\{i,j\} \in \I} m_2(\nu_d)(\phi_i,\phi_j) + \left(\prod_{i=1}^p \Norm{\phi_i}_\infty\right) O\!\left(d^{\frac{1}{2}\floor*{\frac{p-1}{2}}}(\ln d)^p\right)\\
&= d^\frac{p}{4}\sigma^p \sum_{\I \in \pp_p} \prod_{\{i,j\} \in \I} \left(\int_M \phi_i\phi_j \rmes{M}\right) + o(d^\frac{p}{4}),
\end{align*}
where $\floor*{\cdot}$ denotes the integer part, $\Norm{\cdot}_\infty$ denotes the sup-norm, $\sigma$ is the same positive constant as in Theorem~\ref{thm variance}, the set $\pp_p$ is defined by Definition~\ref{def partitions}, and the error term $O\!\left(d^{\frac{1}{2}\floor*{\frac{p-1}{2}}}(\ln d)^p\right)$ does not depend on $(\phi_1,\dots,\phi_p)$.

In particular, for all $\phi \in \mathcal{C}^0(M)$, we have:
\begin{align*}
m_p\left(\prsc{\nu_d}{\phi}\right) &= \mu_p \var{\prsc{\nu_d}{\phi}}^\frac{p}{2} + \Norm{\phi}_\infty^p O\!\left(d^{\frac{1}{2}\floor*{\frac{p-1}{2}}}(\ln d)^p\right)\\
&= \mu_p d^\frac{p}{4}\sigma^p \left(\int_M \phi^2 \rmes{M}\right)^\frac{p}{2} + o(d^\frac{p}{4}).
\end{align*}
\end{thm}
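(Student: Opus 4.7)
The plan is to reduce the statement to a universal Gaussian (Wick-type) moment identity via the method of cumulants. Writing $X_i := \prsc{\nu_d}{\phi_i} - \esp{\prsc{\nu_d}{\phi_i}}$, the classical moments-to-cumulants formula (Möbius inversion on the partition lattice) yields
\begin{equation*}
m_p(\nu_d)(\phi_1,\ldots,\phi_p) = \esp{\prod_{i=1}^p X_i} = \sum_{\pi \in \pa_p'} \prod_{B \in \pi} \kappa_{|B|}\!\left(X_i : i \in B\right),
\end{equation*}
where $\pa_p'$ denotes the partitions of $\{1,\ldots,p\}$ with no singleton block (singletons vanish because $\esp{X_i}=0$). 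If the joint law of the $X_i$'s were Gaussian, all cumulants of order $\geq 3$ would vanish, leaving exactly the pair partitions $\pp_p$ and the Wick sum, since $\kappa_2(X_i,X_j) = m_2(\nu_d)(\phi_i,\phi_j)$. The aim is therefore to show that higher-order cumulants are subdominant in the precise quantitative sense claimed.

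The key estimate to establish is the cumulant bound: for every $q \geq 2$ and every $\psi_1,\ldots,\psi_q \in \mathcal{C}^0(M)$,
\begin{equation*}
\left|\kappa_q\!\left(\prsc{\nu_d}{\psi_1},\ldots,\prsc{\nu_d}{\psi_q}\right)\right| \leq C_q\, \sqrt{d}\,(\ln d)^q \prod_{j=1}^q \Norm{\psi_j}_\infty,
\end{equation*}
with $C_q$ depending only on $q$ and on the ambient geometry. The proof uses the Kac--Rice formula to write the cumulant as the integral over $M^q$ of a truncated $q$-point density $T_q^d$, which is a connected correlation in the sense that it decays whenever $\{x_1,\ldots,x_q\}$ splits into two well-separated subsets. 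The relevant input is the off-diagonal decay of the Bergman kernel of $\E\otimes\L^d$—polynomial up to distance of order $(\ln d)/\sqrt{d}$ and exponential beyond, as in~\cite[Theorem~4.2.1]{MM2007} and used in~\cite{Anc2019}. Combining this decay with the combinatorial expression of $T_q^d$ as an alternating sum of products of $k$-point densities yields a pointwise bound by a sum over spanning trees $\tau$ on $\{1,\ldots,q\}$ of the local density $d^{q/2}$ times $\prod_{\{i,j\}\in\tau}$ of off-diagonal Bergman factors. Integrating along any tree, the root position contributes $O(1)$ (length of $M$) and each of the remaining $q-1$ edges contributes $O(d^{-1/2}\ln d)$ from transverse integration, which combines with $d^{q/2}$ to give the desired bound.

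Granted this cumulant bound, we substitute it into the moments-to-cumulants expansion. A partition $\pi \in \pa_p'$ with $m$ blocks of sizes $q_1,\ldots,q_m \geq 2$ (so $\sum q_j = p$) contributes $O(d^{m/2}(\ln d)^p)$ to $m_p(\nu_d)$. Subject to $q_j \geq 2$, the number of blocks is at most $p/2$, attained only by pair partitions (forcing $p$ even); if $\pi$ contains at least one block of size $\geq 3$, then $m \leq \lfloor (p-1)/2 \rfloor$, so the total contribution of such $\pi$ is $O(d^{(1/2)\lfloor(p-1)/2\rfloor}(\ln d)^p)$, which is exactly the error term in the statement. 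The remaining pair partitions $\I \in \pp_p$ (an empty collection when $p$ is odd) give the main term $\sum_{\I \in \pp_p} \prod_{\{i,j\}\in\I} m_2(\nu_d)(\phi_i,\phi_j)$. Plugging Theorem~\ref{thm variance} into each factor and expanding (each residual $o(\sqrt{d})$ multiplied by the remaining $\lfloor p/2\rfloor-1$ factors of size $O(\sqrt{d})$ is $o(d^{p/4})$) yields the second form. For $\phi_i = \phi$ the formula collapses using $\card(\pp_{2k}) = (2k)!/(2^k k!) = \mu_{2k}$, reproducing the Gaussian-moment structure.

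The technical heart—and the main obstacle—is the cumulant bound of the second paragraph. While Theorem~\ref{thm variance} essentially covers $q=2$, the case $q \geq 3$ requires genuinely new pointwise control of $T_q^d$ near the full diagonal of $M^q$, where the Kac--Rice integrand is the product of a degenerating Gaussian density of $\bigl(s_d(x_1),\ldots,s_d(x_q)\bigr)$ and a conditional expectation of $\prod_{i=1}^q \Norm{\nabla s_d(x_i)}$. Both factors become singular as points coincide, and the final scale $\sqrt{d}(\ln d)^q$ only emerges after these singularities cancel against the alternating sum defining $T_q^d$. This is precisely the sharpening of the moment analysis in~\cite{Anc2019} that we need: the bound $o(d^{(p-1)/2})$ of Theorem~\ref{thm non-central moments} is insufficient for the $d^{p/4}$ scale, and one must carry out the tree-bound analysis sketched above, which is where the delicate Bergman-kernel estimates of~\cite{MM2007} enter in an essential way.
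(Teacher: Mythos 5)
Your proposal takes a genuinely different route from the paper. The paper does \emph{not} pass through cumulants: starting from Lemma~\ref{lem integral expression}, it expresses $m_p(\nu_d)$ as a finite sum of integrals of explicit densities $\D_d^\I$ (built from alternating sums of Kac--Rice densities $\mathcal{R}^k_d$) over the Cartesian powers $M^\I$, then decomposes each $M^\I$ geometrically according to the clustering partition at scale $b_p\frac{\ln d}{\sqrt d}$, and identifies the pieces indexed by double partitions into pairs as the source of the main term. Your route instead applies the moments-to-cumulants identity, replaces the whole analysis by a single clean estimate --- the cumulant bound $\norm{\kappa_q} \leq C_q\sqrt d\,(\ln d)^q\prod\Norm{\psi_j}_\infty$ for $q\geq 2$ --- and recovers the Wick/pair-partition structure as a soft corollary of the counting of blocks. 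The reduction step in your third paragraph is correct: the bound $m\leq\floor*{\frac{p-1}{2}}$ for partitions with a block of size $\geq 3$ exactly reproduces the paper's error exponent, and the collapse to $\mu_p$ for equal test-functions via $\card(\pp_{2k})=\mu_{2k}$ matches. As a structural statement this is arguably cleaner than the paper's direct attack, and it would also give a painless CLT.

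However, you have moved the entire difficulty into the cumulant bound and then left it unproven, which you acknowledge explicitly (``Granted this cumulant bound\dots''; ``The technical heart\dots is the cumulant bound''). This is the gap. The tree-bound sketch is the right heuristic, but several nontrivial points are glossed over and would need to be established. First, the cumulant of the linear statistics of a point process with atoms is not simply $\int_{M^q}T^d_q\,\psi_1\cdots\psi_q$; one has a sum over $\sigma\in\pa_q$ of integrals of $\rho^T_{\norm{\sigma}}$ against block-products of the $\psi_i$, reflecting contributions from coinciding points --- the analogue in the paper of the sum over $\I\in\pa_p$ in Lemma~\ref{lem integral expression}. Each such term has to be controlled separately. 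Second, turning the two-cluster quasi-multiplicativity of Proposition~\ref{prop multiplicativity Rk} into a tree-graph pointwise bound on $\rho^T_q$ is an inductive combinatorial argument with its own bookkeeping; it is classical in cluster expansion theory but it is not an immediate consequence of the statement of Proposition~\ref{prop multiplicativity Rk}, and one must verify the constants do not blow up. Third, the cut-off scale matters quantitatively: with a threshold $\frac{b}{\sqrt d}\ln d$ the off-cluster error in $\rho^T_q$ from Proposition~\ref{prop multiplicativity Rk} is of size $d^{q/2}d^{-Cb}$, so for the separated region to contribute $O(\sqrt d)$ one needs $Cb\geq\frac{q-1}{2}$, i.e., $b$ must grow linearly with $q$ (a $q$-dependent choice distinct from the paper's $b_p = \frac{1}{C}(1+\frac p4)$). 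None of this is impossible, and in fact the paper's Lemma~\ref{lem partitions with a singleton} and its proof are close to the $q$-dependent version of the estimate you need; but carrying it out to the claimed uniform bound would require work at least comparable to Sections~3.3--3.5 of the paper. As it stands, the proposal is a sound reduction plus an honest conjecture, not a proof.
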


\begin{rem}
\label{rem main thm}
If $p$ is odd, then the first term vanishes in the asymptotic expansions of Theorem~\ref{thm main}. Indeed, in this case $\pp_p = \emptyset$ and $\mu_p = 0$. Hence, if $p$ is odd, for all $\phi_1,\dots,\phi_p \in \mathcal{C}^0(M)$, we have $m_p(\nu_d)(\phi_1,\dots,\phi_p) = O(d^{\frac{p-1}{4}}(\ln d)^p)$. If $p$ is even, we have $m_p(\nu_d)(\phi_1,\dots,\phi_p) = O(d^{\frac{p}{4}})$ for all $\phi_1,\dots,\phi_p \in \mathcal{C}^0(M)$.
\end{rem}

Other interesting corollaries of Theorem~\ref{thm main} include the following.

\begin{cor}[Concentration in probability]
\label{cor concentration}
In the setting of Theorem~\ref{thm main}, let $(\epsilon_d)_{d \geq 1}$ denote a sequence of positive numbers and let $\phi \in \mathcal{C}^0(M)$. Then, for all $p \in \N^*$, as $d \to +\infty$, we have:
\begin{equation*}
\P\left(d^{-\frac{1}{2}}\norm{\prsc{\nu_d}{\phi} - \rule{0pt}{10pt}\esp{\prsc{\nu_d}{\phi}}} > \epsilon_d \right) = O\left((d^\frac{1}{4}\epsilon_d)^{-2p}\right).
\end{equation*}
In particular, for all $p \in \N^*$, as $d \to +\infty$, we have:
\begin{equation*}
\P\left(d^{-\frac{1}{2}}\norm{\card(Z_d) - \rule{0pt}{10pt}\esp{\card(Z_d)}} > \epsilon_d \right) = O\left((d^\frac{1}{4}\epsilon_d)^{-2p}\right).
\end{equation*}
\end{cor}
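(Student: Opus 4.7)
The plan is to apply Markov's inequality to the $2p$-th central moment of $\prsc{\nu_d}{\phi}$, using the asymptotic control provided by Theorem~\ref{thm main}. Since $2p$ is even, Theorem~\ref{thm main} (and Remark~\ref{rem main thm}) give the bound
\begin{equation*}
m_{2p}\!\left(\prsc{\nu_d}{\phi}\right) = O\!\left(d^{p/2}\right)
\end{equation*}
as $d \to +\infty$, where the implicit constant depends on $p$ and $\phi$ but not on $d$.

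First I would rewrite the event in question as $\{|\prsc{\nu_d}{\phi} - \esp{\prsc{\nu_d}{\phi}}|^{2p} > d^p \epsilon_d^{2p}\}$ and apply Markov's inequality to the non-negative random variable $|\prsc{\nu_d}{\phi} - \esp{\prsc{\nu_d}{\phi}}|^{2p}$, whose expectation is precisely $m_{2p}(\prsc{\nu_d}{\phi})$. This yields
\begin{equation*}
\P\!\left(d^{-\frac{1}{2}}\norm{\prsc{\nu_d}{\phi} - \esp{\prsc{\nu_d}{\phi}}} > \epsilon_d\right) \leq \frac{m_{2p}\!\left(\prsc{\nu_d}{\phi}\right)}{d^{p}\epsilon_d^{2p}}.
\end{equation*}
Combining this with the bound $m_{2p}(\prsc{\nu_d}{\phi}) = O(d^{p/2})$ from Theorem~\ref{thm main} gives
\begin{equation*}
\P\!\left(d^{-\frac{1}{2}}\norm{\prsc{\nu_d}{\phi} - \esp{\prsc{\nu_d}{\phi}}} > \epsilon_d\right) = O\!\left(\frac{1}{d^{p/2}\epsilon_d^{2p}}\right) = O\!\left((d^{1/4}\epsilon_d)^{-2p}\right),
\end{equation*}
which is the claimed estimate. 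The statement for $\card(Z_d)$ is the special case $\phi = \mathbf{1}$.

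There is no real obstacle here: the whole content of the corollary is packaged inside Theorem~\ref{thm main}, and the argument is a one-line Markov-type deduction. The only minor care required is to use the central moment of even order $2p$ (rather than $p$), both so that the moment is automatically non-negative after dropping the absolute value and so that the first-order term of Theorem~\ref{thm main} is actually of order $d^{p/2}$ (it would vanish to that order if one used an odd moment, by Remark~\ref{rem main thm}, but a crude bound of order $d^{p/2}$ is all that is needed).
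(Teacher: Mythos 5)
Your proposal is correct and matches the paper's own proof essentially line for line: both apply Markov's inequality to the $2p$-th central moment and then invoke the $O(d^{p/2})$ bound from Theorem~\ref{thm main}, specializing to $\phi=\mathbf{1}$ for the cardinality statement.
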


Corollary~\ref{cor concentration} and Theorem~\ref{thm expectation} imply that $\P(\card(Z_d) > \sqrt{d}C)=O(d^{-\frac{p}{2}})$ for any $p \in \N^*$ and $C > \frac{1}{\pi}\vol{M}$. In the same spirit, Gayet and Welschinger proved in~\cite[Theorem~2]{GW2011} that there exists $D>0$ such that $\P( \card(Z_d) > \sqrt{d}\epsilon_d)=O\!\left(\frac{\sqrt{d}}{\epsilon_d}e^{-D\epsilon_d^2}\right)$ for any positive sequence $(\epsilon_d)_{d \geq 1}$ such that $\epsilon_d \xrightarrow[d \to +\infty]{} +\infty$. In the other direction, the following corollary bounds the probability that $Z_d$ is empty.

\begin{cor}[Hole probability]
\label{cor hole probability}
In the setting of Theorem~\ref{thm main}, let $U$ be a non-empty open subset of $M$. Then, for all $p \in \N^*$, as $d \to +\infty$, we have $\P\left(Z_d \cap U = \emptyset\right) = O(d^{-\frac{p}{2}})$.
\end{cor}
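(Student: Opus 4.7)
The plan is to reduce the event $\{Z_d \cap U = \emptyset\}$ to a large-deviation event for a linear statistic, and then apply Corollary~\ref{cor concentration}. First I would pick any non-negative continuous function $\phi \in \mathcal{C}^0(M)$ supported in $U$ with $\phi \not\equiv 0$, so that $c := \frac{1}{\pi}\int_M \phi \rmes{M} > 0$. Since $\phi \geq 0$ and $Z_d$ has finite cardinality almost surely, $\prsc{\nu_d}{\phi} = \sum_{x \in Z_d \cap U} \phi(x)$ is non-negative and vanishes whenever $Z_d \cap U = \emptyset$. Hence
\begin{equation*}
\left\{ Z_d \cap U = \emptyset \right\} \subset \left\{ \prsc{\nu_d}{\phi} = 0 \right\} \subset \left\{ \norm{\prsc{\nu_d}{\phi} - \esp{\prsc{\nu_d}{\phi}}} \geq \esp{\prsc{\nu_d}{\phi}} \right\}.
\end{equation*}

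Next I would use Theorem~\ref{thm expectation} to control the right-hand side from below: $\esp{\prsc{\nu_d}{\phi}} = c\, d^{\frac{1}{2}} + O(d^{-\frac{1}{2}})$, so that for all $d$ large enough one has $\esp{\prsc{\nu_d}{\phi}} \geq \frac{c}{2} d^{\frac{1}{2}}$. Consequently, for $d$ large,
\begin{equation*}
\P\!\left( Z_d \cap U = \emptyset \right) \leq \P\!\left( d^{-\frac{1}{2}}\norm{\prsc{\nu_d}{\phi} - \esp{\prsc{\nu_d}{\phi}}} \geq \tfrac{c}{2} \right).
\end{equation*}
Finally, I would apply Corollary~\ref{cor concentration} to the function $\phi$, with the \emph{constant} sequence $\epsilon_d = \frac{c}{2}$. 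For any fixed $p \in \N^*$, this yields
\begin{equation*}
\P\!\left( d^{-\frac{1}{2}}\norm{\prsc{\nu_d}{\phi} - \esp{\prsc{\nu_d}{\phi}}} \geq \tfrac{c}{2} \right) = O\!\left( \left( \tfrac{c}{2}\, d^{\frac{1}{4}} \right)^{-2p} \right) = O(d^{-\frac{p}{2}}),
\end{equation*}
as $d \to +\infty$, and the corollary follows. There is no real obstacle here: the argument is a one-line deduction from Corollary~\ref{cor concentration} once one has the right test function. The only mild care needed is to ensure that $U$ supports a non-trivial non-negative continuous $\phi$, which is immediate since $U$ is a non-empty open subset of the one-dimensional manifold $M$ (take, for instance, the distance function to $M \setminus U$ truncated to a small coordinate ball contained in $U$).
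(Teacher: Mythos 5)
Your proof is correct and follows essentially the same strategy as the paper's: choose a non-negative continuous test function vanishing outside $U$ and non-trivial on $U$, bound $\esp{\prsc{\nu_d}{\phi}}$ from below via Theorem~\ref{thm expectation}, and then apply Corollary~\ref{cor concentration} with a constant sequence $\epsilon_d$. The only cosmetic difference is that the paper takes $\phi_U$ strictly positive on all of $U$ (giving an equality $\P(Z_d \cap U = \emptyset) = \P(\prsc{\nu_d}{\phi_U}=0)$) whereas you only need, and only prove, the inclusion, which suffices.
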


\paragraph{About the proofs.}

The proof of Theorem~\ref{thm main} relies on several ingredients. Some of them are classical, such as Kac--Rice formulas and estimates for the Bergman kernel of $\E \otimes \L^d$, other are new, such as the key combinatorial argument that we develop in Section~\ref{sec asymptotics of the central moments}.

Kac--Rice formulas are a classical tool in the study of the number of real roots of random polynomials (see~\cite{AT2007,AW2009} for example). More generally, they allow to express the moments of local quantities associated with the level sets of a Gaussian process, such as their volume or their Euler characteristic, only in terms of the correlation function of the process. Even if these formulas are well-known, it is the first time, to the best of our knowledge, that they are used to compute the exact asymptotics of central moments of order greater than $3$. The Kac--Rice formulas we use in this paper were proved in~\cite{Anc2021}. We recall them in Proposition~\ref{prop Kac Rice}. They allow us to write the $p$-th central moment $m_p(\nu_d)(\phi_1,\dots,\phi_p)$ as the integral over $M^p$ of $\phi:(x_1,\dots,x_p) \mapsto \prod_{i=1}^p \phi_i(x_i)$ times some density function $\D_d^p$. Here we are cheating a bit: the random set $Z_d$ being almost surely discrete, the Kac--Rice formulas yield the so-called factorial moments of $\nu_d$ instead of the usual moments. This issue is usual (compare~\cite{AT2007,AW2009}), and it will not trouble us much since the central moments can be written as linear combinations of the factorial ones. For the purpose of this sketch of proof, let us pretend that we have indeed:
\begin{equation*}
m_p(\nu_d)(\phi_1,\dots,\phi_p) = \int_{M^p} \phi \D_d^p \rmes{M}^p.
\end{equation*}
This simplified situation is enough to understand the main ideas of the proof. The correct statement is given in Lemma~\ref{lem integral expression} below.

The density $\D_d^p$ is a polynomial in the Kac--Rice densities $(\mathcal{R}^k	_d)_{1 \leq k \leq p}$ appearing in Definition~\ref{def Rkd}. As such, it only depends on the correlation function of the Gaussian process $(s_d(x))_{x \in M}$, which is the Bergman kernel of~$\E \otimes \L^d$. This kernel admits a universal local scaling limit at scale~$d^{-\frac{1}{2}}$, which is exponentially decreasing (cf.~\cite{MM2007,MM2015}). In~\cite{Anc2021}, the author used these Bergman kernel asymptotics and Olver multispaces (see~\cite{Olv2001}) to prove estimates for the $(\mathcal{R}^k	_d)_{1 \leq k \leq p}$ in the large~$d$ limit. These key estimates are recalled in Propositions~\ref{prop bounded density} and~\ref{prop multiplicativity Rk} below. They allow us to show that $\D_d^p(x) = O(d^\frac{p}{2})$, uniformly in $x \in M^p$. Moreover, we show that $\D_d^p(x) = O(d^{\frac{p}{4}-1})$, uniformly in $x \in M^p$ such that one of the components of $x$ is far from the others. By this we mean that $x =(x_1,\dots,x_p)$ and there exists $i \in \{1,\dots,p\}$ such that, for all $j \neq i$, the distance from $x_i$ to $x_j$ is larger than $b_p\frac{\ln d}{\sqrt{d}}$, where $b_p >0$ is some well-chosen constant.

In order to understand the integral of $\phi\D^p_d$, we split $M^p$ as follows. For $x=(x_1,\dots,x_p) \in M^p$, we define a graph (see Definition~\ref{def Gdp}) whose vertices are the integers $\{1,\dots,p\}$, with an edge between $i$ and $j$ if and only if $i \neq j$ and the distance from $x_i$ to $x_j$ is less than $b_p \frac{\ln d}{\sqrt{d}}$. The connected components of this graph yield a partition $\I(x) \in \pa_p$ (see Definition~\ref{def Idp}) encoding how the $(x_i)_{1\leq i \leq p}$ are clustered in~$M$, at scale $d^{-\frac{1}{2}}$. An example of this construction is represented on Figure~\ref{fig clusters} in Section~\ref{subsec cutting MA into pieces}. Denoting by $M^p_\I = \{ x \in M^p \mid \I(x)=\I\}$, we have:
\begin{equation*}
m_p(\nu_d)(\phi_1,\dots,\phi_p) = \sum_{\I \in \pa_p} \int_{M^p_\I} \phi \D_d^p \rmes{M}^p.
\end{equation*}
Thanks to our estimates on $\D^p_d$, we show that if $\I$ contains a singleton then the integral over $M^p_\I$ is $O(d^{\frac{p}{4}-1})$. Hence it contributes only an error term in Theorem~\ref{thm main}. Moreover, denoting by $\norm{\I}$ the cardinality of~$\I$ (i.e. the number of clusters), the volume of $M^p_\I$ is $O(d^\frac{\norm{\I}-p}{2}(\ln d)^{p})$. Hence, the integral over~$M^p_\I$ is $O(d^\frac{\norm{\I}}{2}(\ln d)^p)$. If $\norm{\I} < \frac{p}{2}$, this is also an error term in Theorem~\ref{thm main}. Thus, the main contribution in $m_p(\nu_d)(\phi_1,\dots,\phi_p)$ comes from the integral of $\phi \D^p_d$ over the pieces $M^p_\I$ indexed by partitions $\I \in \pa_p$ without singletons and such that $\norm{\I} \geq \frac{p}{2}$. These are exactly the partitions into pairs of $\{1,\dots,p\}$. Finally, if $\I\in \pp_p$, we prove the contribution of the integral over $M^p_\I$ to be equal to the product of covariances $\prod_{\{i,j\} \in \I} m_2(\nu_d)(\phi_i,\phi_j)$, up to an error term. When all the test-functions $(\phi_i)_{1 \leq i \leq p}$ are equal, the $p$-th moment $\mu_p$ of the standard Gaussian distribution appears as the cardinality of the set of partitions of $\{1,\dots,p\}$ into pairs.

Concerning the corollaries, Corollaries~\ref{cor concentration} and~\ref{cor hole probability} follow from Theorem~\ref{thm main} and Markov's Inequality for the $2p$-th moment. The strong Law of Large Numbers (Theorem~\ref{thm law of large numbers}) is deduced from Theorem~\ref{thm main} for $p=6$ by a Borel--Cantelli type argument. The Central Limit Theorem (Theorem~\ref{thm central limit theorem}) for the linear statistics is obtained by the method of moments. The functional version of this Central Limit Theorem is then obtained by the Lévy--Fernique Theorem (cf.~\cite{Fer1967}), which is an extension of Lévy's Continuity Theorem adapted to generalized random processes.

\paragraph{Higher dimension.}

In this paper, we are concerned with the real roots of a random polynomial (or a random section) in an ambient space of dimension $1$. There is a natural higher dimensional analogue of this problem. Namely, one can consider the common zero set $Z_d \subset \R P^n$ of $r$ independent real Kostlan polynomials in $n+1$ variables, where $r \in \{1,\dots,n\}$. More generally, we consider the real zero set $Z_d$ of a random real section of $\E \otimes \L^d \to \X$ in the complex Fubini--Study model, where $\X$ is a real projective manifold of complex dimension~$n$ whose real locus $M$ is non-empty, $\L$ is a positive line bundle as above, and $\E$ is a rank~$r$ real Hermitian bundle with $1 \leq r \leq n$. Then, for $d$ large enough, $Z_d$ is almost surely a smooth closed submanifold of codimension $r$ in the smooth closed $n$-dimensional manifold $M$. In this setting, $M$ is equipped with a natural Riemannian metric that induces a volume measure $\rmes{M}$ on $M$ and a volume measure~$\nu_d$ on $Z_d$. As in the $1$-dimensional case, $\nu_d$ is an almost surely well-defined random Radon measure on $M$. In this higher dimensional setting, we have the following analogues of Theorem~\ref{thm expectation} and~\ref{thm variance} (see~\cite{Let2016,Let2019,LP2019}):
\begin{align*}
&\forall \phi \in \mathcal{C}^0(M), & & \esp{\prsc{\nu_d}{\phi}} = d^\frac{r}{2} \frac{\vol{\S^{n-r}}}{\vol{\S^n}} \int_{M}\phi \rmes{M} + \Norm{\phi}_\infty O(d^{\frac{r}{2}-1}),\\
&\forall \phi_1,\phi_2 \in \mathcal{C}^0(M), & & m_2(\nu_d)(\phi_1,\phi_2) = d^{r -\frac{n}{2}} \sigma_{n,r}^2 \int_M \phi_1\phi_2 \rmes{M} + o(d^{r-\frac{n}{2}}),
\end{align*}
where $\sigma_{n,r}>0$ is a universal constant depending only on $n$ and $r$. In~\cite{LP2019}, Letendre and Puchol proved some analogues of Corollaries~\ref{cor concentration} and~\ref{cor hole probability} for any $n$ and $r$. They also showed that the strong Law of Large Numbers (Theorem~\ref{thm law of large numbers}) holds if $n \geq 3$.

Most of the proof of Theorem~\ref{thm main} is valid in any dimension and codimension. In fact, the combinatorics are simpler when $r<n$. The only things we are missing, in order to prove the analogue of Theorem~\ref{thm main} for any $n$ and $r$, are higher dimensional versions of Propositions~\ref{prop bounded density} and~\ref{prop multiplicativity Rk}. The proofs of these propositions (see~\cite{Anc2021}) rely on the compactness of Olver multispaces, which holds when $n=1$ but fails for $n>1$. This seems to be only a technical obstacle and the authors are currently working toward the following.

\begin{conj}
Let $s_d$ be a random section in the complex Fubini--Study model in dimension $n$ and codimension $r \in \{1,\dots,n\}$. Let $\nu_d$ denote the volume measure of the real zero set of $s_d$. For all $p \geq 3$, for all $\phi_1,\dots,\phi_p \in \mathcal{C}^0(M)$, the following holds as $d \to +\infty$:
\begin{equation*}
m_p(\nu_d)(\phi_1,\dots,\phi_p) = d^{\frac{p}{2}(r-\frac{n}{2})}\sigma_{n,r}^p \sum_{\I \in \pp_p} \prod_{\{i,j\} \in \I} \left(\int_M \phi_i\phi_j \rmes{M}\right) + o(d^{\frac{p}{2}(r-\frac{n}{2})}).
\end{equation*}
In particular, for all $\phi \in \mathcal{C}^0(M)$, $\displaystyle m_p(\prsc{\nu_d}{\phi})= \mu_p d^{\frac{p}{2}(r-\frac{n}{2})}\sigma_{n,r}^p \left(\int_M \phi^2 \rmes{M}\right)^\frac{p}{2} + o(d^{\frac{p}{2}(r-\frac{n}{2})})$.
\end{conj}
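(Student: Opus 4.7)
The plan is to adapt the proof strategy of Theorem~\ref{thm main} to dimension $n$ and codimension $r$. The combinatorial skeleton of the argument is dimension-agnostic; the only missing input is a pair of analytic estimates on the Kac--Rice densities, and this is where the entire obstacle lies.

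\emph{Kac--Rice reduction.} The first step is to apply the Kac--Rice formula to express $m_p(\nu_d)(\phi_1,\dots,\phi_p)$ as a finite linear combination, indexed by partitions $\J \in \pa_p$ (translating factorial moments into central ones), of integrals over $M^{\norm{\J}}$ of the appropriate test-function tensor against a $k$-point Kac--Rice density $\mathcal{R}^k_d$. This density is a Gaussian conditional expectation of a product of $k$ Jacobian-type determinants given that the section vanishes at $k$ prescribed points, and is entirely controlled by the jets of the Bergman kernel of $\E \otimes \L^d$ along the partial diagonal.

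\emph{Analytic step (the main obstacle).} One must then prove the higher-dimensional analogues of Propositions~\ref{prop bounded density} and~\ref{prop multiplicativity Rk}: a uniform pointwise bound of the shape $\mathcal{R}^k_d = O(d^{kr/2})$ on all of $M^k$, and a \emph{cluster decorrelation} statement asserting that whenever a subset of the $k$ points is separated from its complement by a distance $\geq b\, d^{-1/2}\ln d$ with $b$ large enough, $\mathcal{R}^k_d$ factorizes, up to a $O(d^{-A})$ error for any fixed $A$, as the product of the correlation densities of the two subfamilies. In dimension $1$ both facts are proved via the compactness of Olver multispaces, which does not survive to $n>1$. My plan is to replace this by a direct asymptotic analysis of the off-diagonal Bergman kernel, using the near-diagonal universal scaling to the Bargmann--Fock model kernel on $\C^n$ together with the Agmon-type exponential off-diagonal decay of Ma--Marinescu~\cite{MM2007,MM2015}: both the pointwise bound and the cluster factorization should then reduce to uniform, $d$-independent statements about the model kernel and its jets, together with explicit control of how the scaling $d^{-1/2}$ interacts with the inversion of the conditional covariance matrices appearing in $\mathcal{R}^k_d$. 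This is the sole serious obstacle and is precisely the gap flagged by the authors.

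\emph{Combinatorial step.} Granted the two analytic estimates, decompose $M^p$ into pieces $M^p_\I$ indexed by the clustering partition $\I(x)$ at scale $b_p d^{-1/2}\ln d$, exactly as in Definition~\ref{def Idp} (whose formulation is already dimension-free). The volume of $M^p_\I$ is $O(d^{-n(p-\norm{\I})/2}(\ln d)^{n(p-\norm{\I})})$ and, combined with the pointwise bound $\mathcal{R}^k_d = O(d^{kr/2})$, yields an integral of order $d^{pr/2 - n(p-\norm{\I})/2}$ up to logarithms on each piece. The centering operation kills any partition containing a singleton, and the remaining constraint $\norm{\I} \geq p/2$ then forces $\I \in \pp_p$, so that partitions into pairs alone carry the leading term. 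For each $\I \in \pp_p$, the cluster decorrelation estimate factorizes the integral over $M^p_\I$ as a product, indexed by the pairs of $\I$, of two-point integrals, each of which is asymptotically equal to $m_2(\nu_d)(\phi_i,\phi_j)$ by the higher-dimensional variance asymptotics recalled in the excerpt. Summing over $\I \in \pp_p$ produces the claimed main term $d^{(p/2)(r-n/2)}\sigma_{n,r}^p \sum_{\I \in \pp_p}\prod_{\{i,j\} \in \I}\int_M \phi_i\phi_j\rmes{M}$; specializing $\phi_1 = \dots = \phi_p = \phi$ recovers the Gaussian moment $\mu_p = \card(\pp_p)$ in the even case and the vanishing main term in the odd case.
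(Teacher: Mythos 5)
The statement you are proving is labeled \emph{Conjecture} in the paper; the authors explicitly do not prove it, they only explain why they believe it and locate the obstruction. Your proposal reproduces that discussion faithfully. The combinatorial machinery of Section~\ref{sec asymptotics of the central moments} --- cutting $M^p$ into clustering cells at scale $d^{-1/2}\ln d$, eliminating singleton clusters via the centering, and reducing the leading term to partitions into pairs so that $\card(\pp_p)=\mu_p$ appears --- is indeed dimension-independent, and your order-of-magnitude accounting is consistent: in ambient dimension $n$ the cell indexed by $\I$ has volume $O\!\left((d^{-1/2}\ln d)^{n(p-\norm{\I})}\right)$, and combined with the conjectural bound $\mathcal{R}^k_d=O(d^{kr/2})$ this singles out $\norm{\I}=p/2$ as carrying the leading order $d^{(p/2)(r-n/2)}$. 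Matching each pair's contribution to $m_2(\nu_d)(\phi_i,\phi_j)$ via the cited variance estimate is also the right move, exactly as in the proof of Theorem~\ref{thm main}.

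What you do not do is prove the two analytic estimates, and that is the whole content of the conjecture. Your ``analytic step'' paragraph is a plan, not an argument. You propose replacing the Olver-multispace compactness used in~\cite{Anc2019} by a direct study of the Bergman kernel scaling together with ``explicit control of how the scaling $d^{-1/2}$ interacts with the inversion of the conditional covariance matrices.'' But that inversion is exactly where the difficulty sits: as the $k$ points collapse in $M^k$ with $n>1$, the evaluation map $\ev^d_x$ degenerates at different rates along different directions, so the singularity of $\odet{\ev^d_x}^{-1}$ along $\Delta_k$ is no longer a one-parameter blow-up that the one-dimensional Olver construction resolves. Merely invoking the Ma--Marinescu near-diagonal and off-diagonal asymptotics does not by itself yield the uniform bound on $\mathcal{R}^k_d$ near $\Delta_k$, nor the factorization with a controlled error. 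Until this is carried out, your argument establishes only the conditional statement ``if the two estimates hold then the conjecture follows,'' which is precisely what the paper already records.

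A minor point: your Kac--Rice reduction ``translating factorial moments into central ones'' via Lemma~\ref{lem decomposition nu ks} is needed only when $r=n$, where $Z_d$ is a finite set and $Z_d^p$ charges $\Delta_p$. When $r<n$ the zero set has positive dimension, $Z_d^p$ gives no mass to the diagonal, and that decomposition is vacuous; this is what the authors mean by the combinatorics being simpler in codimension $r<n$. Your proposal treats the two cases identically, which is harmless but slightly misrepresents where the residual work lies.
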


Proving this conjecture for $n=2$ and $p=4$ is enough to prove that the strong Law of Large Numbers (Theorem~\ref{thm law of large numbers}) holds for $n=2$, which is the only missing case. This conjecture also implies the Central Limit Theorem (Theorem~\ref{thm central limit theorem}) in dimension $n$ and codimension $r$, with the same proof as the one given in Section~\ref{subsec proof of the CLT}. Note that a Central Limit Theorem for the volume of the common zero set of $r$ Kostlan polynomials in $\R P^n$ was proved in~\cite{AADL2018,AADL2021}.

\paragraph{Other related works.}

The complex roots of complex Kostlan polynomials have been studied in relation with Physics in~\cite{BBL1992}. More generally, complex zeros of random holomorphic sections of positive line bundles over projective manifolds were studied in~\cite{SZ1999} and some subsequent papers by the same authors. In~\cite{SZ1999}, they computed the asymptotics of the expected current of integration over the complex zeros of such a random section, and proved a Law of Large Numbers similar to Theorem~\ref{thm law of large numbers}. In~\cite{SZ2010}, they obtained a variance estimate for this random current, and proved that it satisfies a Central Limit Theorem. This last paper extends the results of~\cite{ST2004} for the complex roots of a family of random polynomials, including elliptic ones. In~\cite{BSZ2000}, Bleher, Shiffman and Zelditch studied the $p$-points zero correlation function associated with random holomorphic sections. These functions are the Kac--Rice densities for the non-central $p$-th moment of the linear statistics in the complex case. The results of~\cite{Anc2021} are also valid in the $1$-dimensional complex case, see~\cite[Section~5]{Anc2021}. Thus, Theorem~\ref{thm main} can be extended to the complex case, with the same proof.

In Corollaries~\ref{cor concentration} and~\ref{cor hole probability}, we deduce from Theorem~\ref{thm main} some concentration in probability, faster than any negative power of $d$. However, our results are not precise enough to prove that this concentration is exponentially fast in $d$. In order to obtain such a large deviation estimate, one would need to investigate how the constants involved in the error terms in Theorem~\ref{thm main} grow with~$p$. Some large deviations estimates are known for the complex roots of random polynomials. As far as real roots are concerned, the only result of this kind we are aware of is~\cite{BDFZ2018}.

The Central Limit Theorem (Theorem~\ref{thm central limit theorem}) was already known for the roots of Kostlan polynomials, see~\cite{Dal2015}. In the wider context of random real geometry, Central Limit Theorems are known in several settings, see~\cite{AADL2018,AADL2021,Ros2019} and the references therein. The proofs of all these results rely on Wiener chaos techniques developed by Kratz--Le\`on~\cite{KL2001}. Our proof of Theorem~\ref{thm central limit theorem} follows a different path, relying on the method of moments, for two reasons. First, to the best of our knowledge, Wiener chaos techniques are not available for random sections of line bundles. Second, these techniques are particularly convenient when dealing with random models with slowly decaying correlations, for example Random Waves models. In these cases, one of the first chaotic components, usually the second or the fourth, is asymptotically dominating. Hence, one can reduce the study of, say, the number of zeros to that of its dominating chaotic component, which is easier to handle. In the complex Fubini--Study setting we are considering in this paper, the correlations are exponentially decreasing, so that all chaotic components of the number of zeros have the same order of magnitude as the degree goes to infinity. In order to use Wiener chaoses to prove a Central Limit Theorem in our setting, one would have to study the joint asymptotic behavior of all the chaotic components, which seems more difficult than our method.

For real zeros in ambient dimension $n=1$, Nazarov and Sodin~\cite{NS2016} proved a strong Law of Large Numbers, as $R \to +\infty$, for the number of zeros of a Gaussian process lying in the interval $[-R,R]$. Finally, to the best of our knowledge, Theorem~\ref{thm main} gives the first precise estimate for the central moments of the number of real zeros of a family of random processes.

\paragraph{Organization of the paper.}

This paper is organized as follows. In Section~\ref{sec framework and background} we introduce the object of our study and recall some useful previous results. More precisely, we introduce our geometric framework in Section~\ref{subsec geometric setting}. In Section~\ref{subsec partitions products and diagonal inclusions} we introduce various notations that will allow us to make sense of the combinatorics involved in our problem. The random measures we study are defined in Section~\ref{subsec zeros of random real sections}. Finally, we state the Kac--Rice formulas for higher moments in Section~\ref{subsec Kac-Rice formulas and density functions} and we recall several results from~\cite{Anc2021} concerning the density functions appearing in these formulas. In Section~\ref{sec asymptotics of the central moments}, we prove our main result, that is the moments estimates of Theorem~\ref{thm main}. Section~\ref{sec proof of the corollaries} is concerned with the proofs of the corollaries of Theorem~\ref{thm main}. We prove the Law of Large Numbers (Theorem~\ref{thm law of large numbers}) in Section~\ref{subsec proof of the LLN}, the Central Limit Theorem (Theorem~\ref{thm central limit theorem}) in Section~\ref{subsec proof of the CLT} and the remaining corollaries (Corollaries~\ref{cor concentration} and~\ref{cor hole probability}) in Section~\ref{subsec proof of the corollaries}.

\paragraph{Acknowledgments.}

Thomas Letendre thanks Hugo Vanneuville for a useful discussion about the method of moments. He is also grateful to Julien Fageot for inspiring conversations about generalized random processes. The authors thank Damien Gayet and Jean-Yves Welschinger for their unwavering support. 


\section{Framework and background}
\label{sec framework and background}

We start this section by defining precisely the geometric setting in which we work. This is the purpose of Section~\ref{subsec geometric setting}. In Section~\ref{subsec partitions products and diagonal inclusions}, we introduce the counting measures that we study and explain how they can be splitted in terms that are easier to handle. Section~\ref{subsec zeros of random real sections} is dedicated to the definition of the model of random sections we consider. Finally, we recall the Kac--Rice formulas we need in Section~\ref{subsec Kac-Rice formulas and density functions}, as well as several useful estimates for the density functions appearing in these formulas.


\subsection{Geometric setting}
\label{subsec geometric setting}

In this section, we introduce our geometric framework, which is the same as that of~\cite{Anc2021,GW2016,Let2016,Let2019,LP2019}. See also~\cite{BSZ2000,SZ1999,SZ2010}, where the authors work in a related complex setting. A classical reference for some of the material of this section is~\cite{GH1994}.

\begin{itemize}
\item Let $(\X,c_{\X})$ be a smooth real projective curve, that is a smooth complex projective manifold $\X$ of complex dimension $1$, equipped with an anti-holomorphic involution $c_{\X}$. We denote by
$M$ the real locus of $\X$, that is the set of fixed points of $c_\X$. Throughout the paper, we assume that $M$ is not empty. In this case, $M$ is a smooth compact submanifold of $\X$ of real dimension $1$ without boundary, that is the disjoint union of a finite number of circles.

\item Let $(\E,c_\E)$ and $(\L,c_\L)$ be two real holomorphic line bundles over $(\X,c_\X)$. Denoting by $\pi_\E$ (resp.~$\pi_\L$) the bundle projection, this means that $\E \to \X$ (resp.~$\L \to \X$) is an holomorphic line bundle such that $\pi_\E \circ c_\E = c_\X \circ \pi_\E$ (resp.~$\pi_\L \circ c_\L = c_\X \circ \pi_\L$) and that $c_\E$ (resp.~$c_\L$) is anti-holomorphic and fiberwise $\C$-anti-linear. For any $d \in \N$, we denote by $c_d = c_\E \otimes c_\L^d$. Then, $(\E \otimes \L^d,c_d)$ is a real holomorphic line bundle over $(\X,c_\X)$.

\item We equip $\E$ with a real Hermitian metric $h_\E$. That is $(\E,h_\E)$ is an Hermitian line bundle, and moreover $c_\E^*h_\E = \bar{h_\E}$. Similarly, let $h_\L$ denote a real Hermitian metric on $\L$. For all $d \in \N$, we denote by $h_d = h_\E \otimes h_\L^d$, which defines a real Hermitian metric on $\E \otimes \L^d \to \X$. We assume that $(\L,h_\L)$ is positive, in the sense that its curvature form $\omega$ is a Kähler form. Recall that $\omega$ is locally defined as $\frac{1}{2i}\partial\debar \ln(h_\L(\zeta,\zeta))$, where $\zeta$ is any local holomorphic frame of $\L$. The Kähler structure defines a Riemannian metric $g = \omega(\cdot,i\cdot)$ on $\X$, hence on $M$. The Riemannian volume form on $\X$ associated with $g$ is simply $\omega$. We denote by $\rmes{M}$ the arc-length measure on $M$ associated with~$g$. For all $k \in \N^*$, we denote by $\rmes{M}^k$ the product measure on $M^k$.

\item For any $d \in \N$, we denote by $\H$ the space of global holomorphic sections of $\E \otimes \L^d$. This is a complex vector space of complex dimension $N_d$. By the Riemann--Roch Theorem, $N_d$ is finite and diverges to infinity as $d \to +\infty$. We denote by:
\begin{equation*}
\R\H = \left\{s \in \H \mvert s \circ c_\X = c_d \circ s \right\}
\end{equation*}
the space of global real holomorphic sections of $\E \to \L^d$, which is a real vector space of real dimension $N_d$. Let $x \in M$, the fiber $(\E \otimes \L^d)_x$ is a complex line equipped with a $\C$-anti-linear involution $c_d(x)$. We denote by $\R(\E \otimes \L^d)_x$ the set of fixed points of $c_d(x)$, which is a real line. Then, $\R(\E \otimes \L^d) \to M$ is a real line bundle and, for any $s \in \R\H$, the restriction of $s$ to $M$ is a smooth section of $\R(\E \otimes \L^d) \to M$.

\item The volume form $\omega$ and the Hermitian metric $h_d$ induce an Hermitian $L^2$-inner product $\prsc{\cdot}{\cdot}$ on $\H$. It is defined by:
\begin{equation}
\label{eq def L2 inner product}
\forall s_1,s_2 \in \H, \qquad \prsc{s_1}{s_2}=\int_\X h_d(s_1,s_2) \omega.
\end{equation}
The restriction of this inner product to $\R\H$ is a Euclidean inner product.

\item For any section $s \in \R\H$, we denote by $Z_s = s^{-1}(0) \cap M$ its real zero set. Since~$s$ is holomorphic, if $s \neq 0$ its zeros are isolated. In this case, $Z_s$ is finite by compactness of $M$, and we denote by $\nu_s = \sum_{x \in Z_s} \delta_x$, where $\delta_x$ stands for the unit Dirac mass at $x \in M$. The measure~$\nu_s$ is called the \emph{counting measure} of $Z_s$. It is a Radon measure, that is a continuous linear form on the space $(\mathcal{C}^0(M), \Norm{\cdot}_\infty)$ of continuous functions equipped with the sup-norm. It acts on continuous functions by: for all $\phi \in \mathcal{C}^0(M)$, $\prsc{\nu_s}{\phi} = \sum_{x \in Z_s} \phi(x)$.
\end{itemize}

\begin{exa}[Kostlan scalar product]
\label{ex Kostlan inner product}
We conclude this section by giving an example of our geometric setting. We consider $\X =\C P^1$, equipped with the conjugation induced by the one in~$\C^2$. Its real locus is $M=\R P^1$. We take $\E$ to be trivial and $\L$ to be the dual of the tautological line bundle $\left\{(v,x) \in \C^2 \times \C P^1 \mvert v \in x \right\}$, that is $\L = \mathcal{O}(1)$. Both $\E$ and $\L$ are canonically real Hermitian line bundle over $\C P^1$, and the curvature of $\L$ is the Fubini--Study form, normalized so that $\vol{\C P^1}=\pi$. The corresponding Riemannian metric on~$\R P^1$ is the quotient of the metric on the Euclidean unit circle, so that the length of $\R P^1$ is~$\pi$.

In this setting, $\H$ (resp.~$\R\H$) is the space of homogeneous polynomials of degree $d$ in two variables with complex (resp.~real) coefficients. If $s \in \R\H$ is such a polynomial, then $Z_s$ is the set of its roots in $\R P^1$. Finally, up to a factor $(d+1)\pi$ which is irrelevant to us, the inner product of Equation~\eqref{eq def L2 inner product} is defined by:
\begin{equation*}
\prsc{P}{Q} = \frac{1}{\pi^2d!} \int_{\C^2} P(z) \bar{Q(z)} e^{-\Norm{z}^2} \dx z,
\end{equation*}
for any homogeneous polynomials $P$ and $Q$ of degree $d$ in two variables. In particular, the family $\left\{\sqrt{\binom{d}{k}}X^kY^{d-k} \mvert 0\leq k\leq d\right\}$ is an orthonormal basis of $\R\H$ for this inner product.
\end{exa}


\subsection{Partitions, products and diagonal inclusions}
\label{subsec partitions products and diagonal inclusions}

In this section, we introduce some notations that will be useful throughout the paper, in particular to sort out the combinatorics involved in the proof of Theorem~\ref{thm main} (see Section~\ref{sec asymptotics of the central moments}). In all this section, $M$ denotes a smooth manifold.

\begin{ntns}
\label{ntn product indexed by A}
Let $A$ be a finite set.
\begin{itemize}
\item We denote by $\card(A)$ or by $\norm{A}$ the cardinality of $A$.
\item We denote by $M^A$ the Cartesian product of $\norm{A}$ copies of $M$, indexed by the elements of $A$.
\item A generic element of $M^A$ is denoted by $\underline{x}_A =(x_a)_{a \in A}$. If $B \subset A$ we denote by $\underline{x}_B =(x_a)_{a \in B}$.
\item Let $(\phi_a)_{a \in A}$ be continuous functions on $M$, we denote by $\phi_A = \boxtimes_{a\in A}\phi_a$ the function on $M^A$ defined by: $\phi_A(\underline{x}_A)=\prod_{a\in A}\phi_a(x_a)$, for all $\underline{x}_A = (x_a)_{a \in A} \in M^A$.
\end{itemize}
If $A$ is clear from the context or of the form $\{1,\dots,k\}$ with $k \in \N^*$, we use the simpler notations $x$ for $\underline{x}_A$ and $\phi$ for $\phi_A$.
\end{ntns}

Recall the we defined the set $\pa_A$ (resp.~$\pa_k$) of partitions of a finite set $A$ (resp. of $\{1,\dots,k\}$) in the introduction, see Definition~\ref{def partitions}.

\begin{dfn}[Diagonals]
\label{def diagonals}
Let $A$ be a finite set, we denote by $\Delta_A$ the \emph{large diagonal} of $M^A$, that is:
\begin{equation*}
\Delta_A = \left\{(x_a)_{a \in A}\in M^A \mvert \rule{0pt}{10pt} \exists a,b \in A \text{ such that } a \neq b \text{ and } x_a = x_b \right\}.
\end{equation*}
Moreover, for all $\I \in \pa_A$, we denote by
\begin{equation*}
\Delta_{A,\I} = \left\{(x_a)_{a \in A}\in M^A \mvert \rule{0pt}{10pt} \forall a,b \in A, \left(x_a=x_b \iff \exists I \in \I \text{ such that } a \in I \text{ and } b \in I \right)\right\}.
\end{equation*}
If $A = \{1,\dots,k\}$, we use the simpler notations $\Delta_k$ for $\Delta_A$, and $\Delta_{k,\I}$ for $\Delta_{A,\I}$.
\end{dfn}

\begin{dfn}[Diagonal inclusions]
\label{def diagonal inclusions}
Let $A$ be a finite set and let $\I \in \pa_A$. The \emph{diagonal inclusion} $\iota_\I:M^\I\rightarrow M^A$ is the function defined, for all $\underline{y}_\I= (y_I)_{I \in \I} \in M^\I$, by $\iota_\I(\underline{y}_\I)= (x_a)_{a\in A}$, where for all $I \in \I$, for all $a \in I$, we set $x_a = y_I$.
\end{dfn}

\begin{rem}
\label{rem diagonal inclusions}
With these definitions, we have $M^A = \bigsqcup_{\I \in \pa_A} \Delta_{A,\I}$ and $\Delta_A = \bigsqcup_{\I \in \pa_A \setminus \{\I_0(A)\}} \Delta_{A,\I}$, where we denoted by $\I_0(A)= \left\{ \{a\} \mvert a \in A\right\}$. Moreover, $\iota_\I$ is a smooth diffeomorphism from $M^\I \setminus \Delta_\I$ onto $\Delta_{A,\I} \subset M^A$. Note that $\Delta_{A,\I_0(A)}$ is the configuration space $M^A \setminus \Delta_A$ of $\norm{A}$ distinct points in~$M$. In the following, we avoid using the notation $\Delta_{A,\I_0(A)}$ and use $M^A \setminus \Delta_A$ instead.
\end{rem}

Let us now go back to the setting of Section~\ref{subsec geometric setting}, in which $M$ is the real locus of the projective manifold $\X$. Let $d \in \N$ and let $s \in \R\H \setminus \{0\}$. In Section~\ref{subsec geometric setting}, we defined the counting measure $\nu_s$ of the real zero set $Z_s$ of $s$. More generally, for any finite set $A$, we can define the counting measure of $Z_s^A = \left\{(x_a)_{a \in A} \in M^A \mvert \forall a \in A, x_a \in Z_s \right\}$ and that of $Z_s^A \setminus \Delta_A$. The latter is especially interesting for us, since this is the one that appears in the Kac--Rice formulas, see Proposition~\ref{prop Kac Rice} below.

\begin{dfn}[Counting measures]
\label{def nu A}
Let $d \in \N$ and let $A$ be a finite set. For any non-zero $s \in \R\H$, we denote~by:
\begin{align*}
\nu^A_s &= \sum_{x \in Z_s^A} \delta_x & &\text{and} & \tilde{\nu}^A_s &= \sum_{x \in Z_s^A \setminus \Delta_A} \delta_x,
\end{align*}
where $\delta_x$ is the unit Dirac mass at $x =(x_a)_{a \in A} \in M^A$ and $\Delta_A$ is defined by Definition~\ref{def diagonals}. These measures are the counting measures of $Z_s^A$ and $Z_s^A \setminus \Delta_A$ respectively. Both $\nu^A_s$ and $\tilde{\nu}^A_s$ are Radon measure on $M^A$. They act on $\mathcal{C}^0(M^A)$ as follows: for any $\phi \in \mathcal{C}^0(M^A)$,
\begin{align*}
\prsc{\nu^A_s}{\phi} &= \sum_{x \in Z_s^A} \phi(x) & &\text{and} & \prsc{\tilde{\nu}^A_s}{\phi} &= \sum_{x \in Z_s^A \setminus \Delta_A} \phi(x).
\end{align*}
As usual, if $A = \{1,\dots,k\}$, we denote $\nu^k_s$ for $\nu^A_s$ and $\tilde{\nu}^k_s$ for $\tilde{\nu}^A_s$.
\end{dfn}

We have seen in Remark~\ref{rem diagonal inclusions} that $M^A$ splits as the disjoint union of the diagonals $\Delta_{A,\I}$, with $\I \in \pa_A$. Taking the intersection with $Z_s^A$ yields a splitting of this set. Using the diagonal inclusions of Definition~\ref{def diagonal inclusions}, this can be expressed in terms of counting measures as follows.

\begin{lem}
\label{lem decomposition nu ks}
Let $d \in \N$ and let $A$ be a finite set. For any $s \in \R\H \setminus \{0\}$, we have:
\begin{equation*}
\nu^A_s = \sum_{\I \in \pa_A} (\iota_\I)_*\left(\tilde{\nu}^\I_s\right).
\end{equation*}
\end{lem}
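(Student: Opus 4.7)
The plan is to verify the identity by directly comparing the two Radon measures on an arbitrary test-function, using the partition $M^A = \bigsqcup_{\I \in \pa_A}\Delta_{A,\I}$ recalled in Remark~2.2, restricted to $Z_s^A$. First I would observe that to any tuple $x=(x_a)_{a\in A}\in Z_s^A$ one can associate the equivalence relation on $A$ given by $a\sim b \iff x_a=x_b$, whose equivalence classes form a partition $\I(x)\in \pa_A$. By definition of the loci $\Delta_{A,\I}$, one has $x\in \Delta_{A,\I}$ if and only if $\I(x)=\I$, so that
\begin{equation*}
Z_s^A = \bigsqcup_{\I \in \pa_A} \left( Z_s^A \cap \Delta_{A,\I}\right).
\end{equation*}

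Next I would use the fact, already mentioned in Remark~2.2, that $\iota_\I$ restricts to a diffeomorphism $M^\I\setminus\Delta_\I \to \Delta_{A,\I}$. Unwinding Definition~2.2, a tuple $\underline{y}_\I=(y_I)_{I\in\I}\in M^\I\setminus\Delta_\I$ satisfies $\iota_\I(\underline{y}_\I)\in Z_s^A$ if and only if $y_I\in Z_s$ for every $I\in\I$, i.e.\ if and only if $\underline{y}_\I \in Z_s^\I\setminus \Delta_\I$. Hence $\iota_\I$ induces a bijection between $Z_s^\I\setminus\Delta_\I$ and $Z_s^A\cap \Delta_{A,\I}$.

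Finally, by the pushforward formula for Dirac masses, $(\iota_\I)_*\delta_{\underline{y}_\I} = \delta_{\iota_\I(\underline{y}_\I)}$, so that
\begin{equation*}
(\iota_\I)_*(\tilde{\nu}^\I_s) \;=\; \sum_{\underline{y}_\I \in Z_s^\I\setminus \Delta_\I} \delta_{\iota_\I(\underline{y}_\I)} \;=\; \sum_{x \in Z_s^A \cap \Delta_{A,\I}} \delta_x.
\end{equation*}
Summing over $\I\in\pa_A$ and using the disjoint union above yields $\sum_{\I\in\pa_A}(\iota_\I)_*(\tilde{\nu}^\I_s)=\sum_{x\in Z_s^A}\delta_x =\nu^A_s$, which is the claim. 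There is no real obstacle here: the whole content is to notice that the tautological partition of $A$ attached to a tuple $x\in Z_s^A$ is exactly the index labelling the diagonal stratum it belongs to, and that the diagonal inclusion $\iota_\I$ provides the inverse parametrization of that stratum by $\norm{\I}$-tuples of pairwise distinct zeros of $s$.
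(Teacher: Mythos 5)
Your argument is correct and follows exactly the same route as the paper's proof: decompose $Z_s^A$ along the diagonal strata $\Delta_{A,\I}$, use that $\iota_\I$ restricts to a bijection between $Z_s^\I\setminus\Delta_\I$ and $Z_s^A\cap\Delta_{A,\I}$, and push forward the Dirac masses. Nothing to add.
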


\begin{proof}
Recall that $M^A = \bigsqcup_{\I \in \pa_A} \Delta_{A,\I}$. Then, we have:
\begin{equation*}
\nu^A_s = \sum_{x \in Z_s^A} \delta_x = \sum_{\I \in \pa_A} \left(\sum_{x \in Z_s^A \cap \Delta_{A,\I}} \delta_x\right).
\end{equation*}
Let $\I \in \pa_A$, recall that $\iota_\I$ defines a smooth diffeomorphism from $M^\I \setminus \Delta_\I$ onto $\Delta_{A,\I}$. Moreover, $\iota_\I(Z^\I_s \setminus \Delta_\I) = Z^A_s \cap \Delta_{A,\I}$ (see Definition~\ref{def diagonals} and~\ref{def diagonal inclusions}). Hence,
\begin{equation*}
\sum_{x \in Z_s^A \cap \Delta_{A,\I}} \delta_x = \sum_{y \in Z^\I_s \setminus \Delta_\I} \delta_{\iota_\I(y)} = \sum_{y \in Z^\I_s \setminus \Delta_\I} (\iota_\I)_*\delta_y = (\iota_\I)_*\tilde{\nu}^\I_s.\qedhere
\end{equation*}
\end{proof}


\subsection{Zeros of random real sections}
\label{subsec zeros of random real sections}

Let us now introduce the main object of our study: the random Radon measure $\nu_d$ encoding the real zeros of a random real section $s_d \in \R\H$. The model of random real sections we study is often referred to as the complex Fubini--Study model. It was introduced in this generality by Gayet and Welschinger in~\cite{GW2011}. This model is the real counterpart of the model of random holomorphic sections studied by Shiffman and Zelditch in~\cite{SZ1999} and subsequent articles.

\begin{dfn}[Gaussian vectors]
\label{def Gaussian distribution}
Let $(V,\prsc{\cdot}{\cdot})$ be a Euclidean space of dimension $N$, and let $\Lambda$ denote a positive self-adjoint operator on $V$. Recall that a random vector $X$ in $V$ is said to be a \emph{centered Gaussian} with \emph{variance operator} $\Lambda$ if its distribution admits the following density with respect to the normalized Lebesgue measure:
\begin{equation*}
v \mapsto \frac{1}{(2\pi)^\frac{N}{2} \det(\Lambda)^\frac{1}{2}} \exp\left(-\frac{1}{2}\prsc{v}{\Lambda^{-1}v}\right).
\end{equation*}
We denote by $X \sim \mathcal{N}(0,\Lambda)$ the fact that $X$ follows this distribution. If $X \sim \mathcal{N}(0,\Id)$, where $\Id$ is the identity of $V$, we say that $X$ is a \emph{standard Gaussian vector} in $V$.
\end{dfn}

\begin{rem}
Recall that if $(e_1,\dots,e_N)$ is an orthonormal basis of $V$, a random vector $X \in V$ is a standard Gaussian vector if and only if $X = \sum_{i=1}^N a_i e_i$ where the $(a_i)_{1 \leq i \leq N}$ are independent identically distributed $\mathcal{N}(0,1)$ real random variables.
\end{rem}

In the setting of Section~\ref{subsec geometric setting}, for any $d \in \N$, the space $\R\H$ is endowed with the Euclidean inner product $\prsc{\cdot}{\cdot}$ defined by Equation~\eqref{eq def L2 inner product}. We denote by $s_d$ a standard Gaussian vector in $\R\H$. Almost surely $s_d \neq 0$, hence its real zero set and the associated counting measure are well-defined. For simplicity, we denote by $Z_d = Z_{s_d}$ and by $\nu_d = \nu_{s_d}$. Similarly, for any finite set $A$, we denote by $\nu_d^A = \nu^A_{s_d}$ and by $\tilde{\nu}^A_d = \tilde{\nu}^A_{s_d}$ (see Definition~\ref{def nu A}).

\begin{exa}[Kostlan polynomials]
\label{ex Kostlan polynomials}
In the setting of Example~\ref{ex Kostlan inner product}, we have $M=\R P^1$, and $\R\H$ is the set of real homogeneous polynomials of degree $d$ in two variables endowed with the Kostlan inner product. In this case, the standard Gaussian vector $s_d \in \R\H$ is the homogeneous Kostlan polynomial of degree~$d$ defined in Section~\ref{sec introduction}.

Note that the rotations of the circle $M=\R P^1$ are induced by orthogonal transformations of~$\R^2$. The group $O_2(\R)$ acts on the space of homogeneous polynomials in two variables of degree $d$ by composition on the right. Example~\ref{ex Kostlan inner product} shows that the Kostlan inner product is invariant under this action, hence so is the distribution of $s_d$. In other terms, the random process $(s_d(x))_{x \in M}$ is stationary.
\end{exa}

\begin{exa}
\label{ex not Kostlan}
Let us give another example showing that the complex Fubini--Study model goes beyond Kostlan polynomials. Let $\X$ be a Riemann surface embedded in $\C P^n$. We assume that $\X$ is real, in the sense that it is stable under complex conjugation in $\C P^n$. Note that, in our setting, we can always realize $\X$ in such a way for some $n$ large enough, by Kodaira's Embedding Theorem. Then $M = \X \cap \R P^n$ is the disjoint union of at most $\mathfrak{g}$+1 smooth circles in $\R P^n$, where $\mathfrak{g}$ is the genus of $\X$. Figure~\ref{fig Real Riemann Surface} below shows two examples of this geometric situation.

\begin{figure}[ht]
\hspace{0.05\textwidth}
\subfloat[$M$ is a circle even though $\X$ has genus $2$. \label{fig a}]{\includegraphics{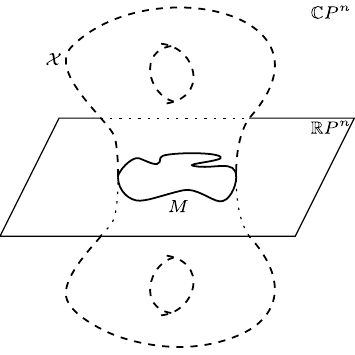}}
\hfill
\subfloat[$\X$ is connected but $M$ has three connected components $M_1$, $M_2$ and $M_3$. \label{fig b}]{\includegraphics{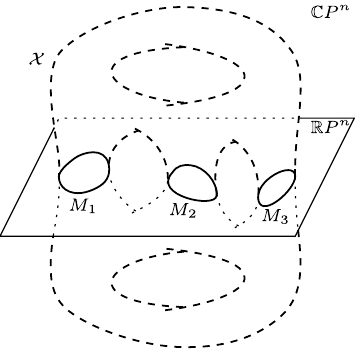}}
\hspace{0.05\textwidth}
\caption{Two examples of a real Riemann surface $\X$ embedded in $\C P^n$ and its real locus $M$.}
\label{fig Real Riemann Surface}
\end{figure}

We choose $\E$ to be trivial and $\L$ to be the restriction to $\X$ of the hyperplane line bundle $\mathcal{O}(1) \to \C P^n$. Then, the elements of $\R\H$ are restrictions to $\X$ of homogeneous polynomials of degree $d$ in $n+1$ variables with real coefficients. This space is equipped with the inner product~\eqref{eq def L2 inner product}. In Equation~\eqref{eq def L2 inner product}, the Kähler form $\omega$ is the restriction $\X$ of the Fubini--Study form on $\C P^n$. However, the domain of integration is $\X$, so that~\eqref{eq def L2 inner product} is not the analogue in $n+1$ variables of the Kostlan inner product of Example~\ref{ex Kostlan inner product}.
\begin{itemize}
\item On Figure~\ref{fig a}, the curve $M$ is connected hence diffeomorphic to $\R P^1$. However, $\E \otimes \L^d \to \X$ is not an avatar of $\mathcal{O}(d) \to \C P^1$ for some $d \geq 1$, since $\X \not\simeq \C P^1$. In particular, the previous construction gives a random homogeneous polynomial $s_d$ on $M \simeq \R P^1$, which is Gaussian and centered but is not a Kostlan polynomial of some degree.

Unlike what happens in Example~\ref{ex Kostlan polynomials}, rotations in $M$ are not obtained as restriction of isometries of $\X$ (generically, the only isometries of $\X$ are the identity and the complex conjugation). Thus, there is no reason why the process $(s_d(x))_{x \in M}$ should be stationary.

\item On Figure~\ref{fig b}, the real locus $M$ had several connected components while $\X$ is connected. Since the inner product~\eqref{eq def L2 inner product} is defined by integrating on the whole complex locus~$\X$, the values of $s_d$ in different connected components of $M$ are a priori correlated.
\end{itemize}
\end{exa}

\begin{lem}[Boundedness of linear statistics]
\label{lem L inf random variables}
Let $d \geq 1$ and $s_d \sim \mathcal{N}(0,\Id)$ in $\R\H$. For all $\phi \in \mathcal{C}^0(M)$, the random variable $\prsc{\nu_d}{\phi}$ is bounded. In particular, it admits finite moments of any order.
\end{lem}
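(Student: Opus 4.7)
The plan is to exhibit a deterministic (almost sure) uniform bound on $\card(Z_d)$ that depends only on $d$, and then to deduce the conclusion from the trivial estimate $\left|\prsc{\nu_d}{\phi}\right| \leq \card(Z_d)\, \Norm{\phi}_\infty$.

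First I would recall that, since $s_d$ is a standard Gaussian vector in the finite-dimensional space $\R\H$, we have $s_d \neq 0$ almost surely. On this almost sure event, $s_d$ is a non-zero global holomorphic section of the line bundle $\E \otimes \L^d \to \X$ over the compact Riemann surface $\X$. The classical degree formula for line bundles over a compact Riemann surface then gives that the number of zeros of $s_d$ on $\X$, counted with multiplicity, equals $\deg(\E \otimes \L^d) = \deg(\E) + d\deg(\L)$. In particular the complex zero set $s_d^{-1}(0) \subset \X$ is finite of cardinality at most $\deg(\E)+d\deg(\L)$, and hence so is $Z_d = s_d^{-1}(0) \cap M$. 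This yields the deterministic bound
\begin{equation*}
\card(Z_d) \leq \deg(\E) + d\deg(\L) =: C_d,
\end{equation*}
valid almost surely.

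Given this, for every $\phi \in \mathcal{C}^0(M)$ we have, almost surely,
\begin{equation*}
\left|\prsc{\nu_d}{\phi}\right| = \left|\sum_{x \in Z_d} \phi(x)\right| \leq \card(Z_d)\, \Norm{\phi}_\infty \leq C_d \Norm{\phi}_\infty,
\end{equation*}
which shows that $\prsc{\nu_d}{\phi}$ is an essentially bounded random variable. A bounded random variable admits moments of all orders, which is the remaining claim.

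I do not expect a substantial obstacle here: the only subtlety is to make sure the author's notation $\prsc{Z_d}{\phi}$ is the linear statistic $\prsc{\nu_d}{\phi}$ of Definition~\ref{def nu A}, and to check that the a.s. non-vanishing of $s_d$ is genuinely used so that the degree formula applies.
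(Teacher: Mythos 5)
Your proof is correct and follows the same route as the paper: almost surely $s_d \neq 0$, the number of complex zeros of $s_d$ on $\X$ is bounded by $\deg(\E) + d\deg(\L)$ (the paper phrases this via Poincaré duality of the zero divisor with $c_1(\E\otimes\L^d)$, which is the same degree formula you invoke), and then $\norm{\prsc{\nu_d}{\phi}} \leq \Norm{\phi}_\infty \card(Z_d)$. The notational point you flag is indeed a minor typo in the lemma statement; $\prsc{Z_d}{\phi}$ means $\prsc{\nu_d}{\phi}$.
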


\begin{proof}
We have $\norm{\prsc{\nu_d}{\phi}} = \norm{\sum_{x \in Z_d}\phi(x)} \leq \Norm{\phi}_\infty \card(Z_d)$. Hence it is enough to prove that $\card(Z_d)$ is a bounded random variable.

The cardinality of $Z_d$ is the number of zeros of $s_d$ in $M$, which is smaller than the number of zeros of $s_d$ in $\X$. Now, almost surely, $s_d \neq 0$, and the complex zero set of $s_d$ defines a divisor which is Poincaré-dual to the first Chern class of $\E \otimes \L^d$ (see~\cite[pp.~136 and 141]{GH1994}). Hence, almost surely:
\begin{equation*}
\card(Z_d) \leq \card \left\{ x \in \X \mvert s_d(x)=0 \right\} \leq \int_\X c_1(\E \otimes \L^d)=d\cdot\deg(\L)+\deg(\E).\qedhere
\end{equation*}
\end{proof}

\begin{rem}
\label{rem roots}
In the case of polynomials, the proof is clearer: the number of real roots of a non-zero polynomial of degree $d$ is bounded by the number of its complex roots, which is at most $d$.
\end{rem}


\subsection{Kac-Rice formulas and density functions}
\label{subsec Kac-Rice formulas and density functions}

In this section, we recall some important facts about Kac-Rice formulas. These formulas are classical tools in the study of moments of local quantities such as the cardinality, or more generally the volume, of the zero set of a smooth Gaussian process. Classical references for this material are~\cite{AT2007,AW2009}. With a more geometric point of view, the following formulas were proved and used in~\cite{Anc2021,GW2016,LP2019}, see also~\cite{Let2016}. In the same spirit, Lerario and Stecconi derived a Kac--Rice formula for sections of fiber bundles, see~\cite[Theorem~23.6]{LS2019}.

\begin{rem}
In some of the papers we refer to in this section, the line bundle $\E$ is taken to be trivial. That is the authors considers random real sections of $\L^d$ instead of $\E \otimes \L^d$. As we already explained (see Remark~\ref{rem line bundle E}), the proofs of the results we cite below rely on asymptotics for the Bergman kernel of $\E \otimes \L^d$, as $d \to +\infty$. These asymptotics do not depend on $\E$, see~\cite[Theorem~4.2.1]{MM2007} and~\cite[Theorem~1]{MM2015}. Hence the results established in the case of a trivial line bundle $\E$ can be extended to the case of a general $\E$ without modifying the proofs.
\end{rem}

\begin{dfn}[Jacobian]
\label{def jacobian}
Let $V$ and $V'$ be two Euclidean spaces of dimension $N$ and $N'$ respectively. Let $L:V \to V'$ be a linear map and let $L^*$ denote its adjoint. We denote by $\odet{L} = \det\left(LL^*\right)^\frac{1}{2}$ the \emph{Jacobian} of $L$.
\end{dfn}

\begin{rem}
If $N' \leq N$, an equivalent definition of the Jacobian is the following: $\odet{L}$ is the absolute value of the determinant of the matrix of $L$, restricted to $\ker(L)^\perp$, in orthonormal bases. Note that, in any case, $\odet{L} > 0$ if and only if $L$ is surjective.
\end{rem}
Let us now consider the geometric setting of Section~\ref{subsec geometric setting}.

\begin{dfn}[Evaluation map]
\label{def ev dx}
Let $k \in \N^*$, for all $d \in \N$, for all $x=(x_1,\dots,x_k)\in M^k$, we denote by:\begin{equation*}
\ev^d_x:\R\H \longrightarrow \bigoplus_{i=1}^k \R(\E \otimes \L^d)_{x_i}
\end{equation*}
the \emph{evaluation map} at $x$, defined by $\ev^d_x:s\mapsto (s(x_1),\dots,s(x_k))$.
\end{dfn}

\begin{lem}[{\cite[Proposition~2.11]{Anc2021}}]
\label{lem surjectivity ev dx}
Let $k \in \N^*$, there exists $d_k \in \N$ such that, for all $d \geq d_k$, for all $x \in M^k \setminus \Delta_k$, the evaluation map $\ev^d_x$ is surjective (i.e.~$\odet{\ev_x^d}>0$).
\end{lem}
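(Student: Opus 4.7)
The plan is to reduce the real statement to its complex analogue via the real structure, and then invoke a standard ampleness argument to obtain the complex surjectivity uniformly in $x$.

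First I would consider the complexified evaluation map
\[
\ev^{d,\C}_x \colon H^0(\X,\E\otimes\L^d)\longrightarrow\bigoplus_{i=1}^k (\E\otimes\L^d)_{x_i},
\]
and show that there exists $d_k\in\N$ such that $\ev^{d,\C}_x$ is surjective for every $d\geq d_k$ and every $k$-tuple of \emph{distinct} complex points $(x_1,\dots,x_k)\in\X^k\setminus\Delta_k$. Since $(\L,h_\L)$ has positive curvature, $\L$ is ample. A standard consequence (either via Serre vanishing applied to the ideal sheaf of any length-$k$ zero-dimensional subscheme $Z\subset\X$, which forces $H^1(\X,\E\otimes\L^d\otimes\mathcal{I}_Z)=0$ for $d$ large enough, or via the theorem that sufficiently high tensor powers of an ample line bundle on a smooth projective variety are $(k-1)$-very ample) yields exactly the desired uniform surjectivity. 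On the curve $\X$ one can alternatively use Riemann--Roch together with the vanishing of $H^1(\X,\E\otimes\L^d\otimes\mathcal{I}_Z)$, which holds for $d\deg(\L)+\deg(\E)-k$ large compared to $2\mathfrak{g}-2$, where $\mathfrak{g}$ is the genus of $\X$.

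Once the complex surjectivity is in hand, I would deduce the real statement by symmetrization. Fix $d\geq d_k$, $x\in M^k\setminus\Delta_k$, and a target $(v_1,\dots,v_k)\in\bigoplus_i \R(\E\otimes\L^d)_{x_i}$. By complex surjectivity pick $s\in H^0(\X,\E\otimes\L^d)$ with $s(x_i)=v_i$ for each $i$. Since $c_\X$ and $c_d$ are compatible anti-holomorphic involutions, the section
\[
s^\R=\tfrac{1}{2}\bigl(s+c_d\circ s\circ c_\X\bigr)
\]
lies in $\R H^0(\X,\E\otimes\L^d)$. At every real point $x_i\in M$ we have $c_\X(x_i)=x_i$ and $c_d(x_i)v_i=v_i$ because $v_i$ is by assumption fixed by $c_d(x_i)$; therefore $s^\R(x_i)=\tfrac{1}{2}(v_i+v_i)=v_i$. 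Thus any real value in the right-hand side is reached by a real section, i.e.\ $\ev^d_x$ is surjective, equivalently $\odet{\ev^d_x}>0$.

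The main obstacle is precisely the uniformity of $d_k$ with respect to $x$: the configuration space $M^k\setminus\Delta_k$ is not compact, so one needs the complex surjectivity to persist as the $x_i$ collide. This is exactly the content of $(k-1)$-very ampleness of $\E\otimes\L^d$ (which handles collisions in the form of infinitesimal jet data) and is the only nontrivial ingredient; everything else is formal manipulation of the real structure. As a sanity check, this matches the philosophy of the Bergman-kernel peak-section estimates used elsewhere in the paper: at scale $d^{-1/2}$ distinct points are already ``separated'' by normalized peak sections, while tighter clusters are resolved by higher-order jet separation, both of which are encoded in $(k-1)$-very ampleness for large $d$.
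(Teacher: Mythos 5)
Your proof is essentially correct, and the paper itself does not give a proof of this lemma (it only cites \cite[Proposition~2.11]{Anc2019}), so there is no direct comparison to make. The reduction to the complex case via the symmetrization $s^\R = \tfrac{1}{2}(s + c_d\circ s\circ c_\X)$ is sound: one checks that $c_d\circ s\circ c_\X$ is again a holomorphic section, that $s^\R$ satisfies $s^\R\circ c_\X = c_d\circ s^\R$, and that at a real point the two summands agree so that $s^\R(x_i)=v_i$ whenever $v_i$ is $c_d(x_i)$-fixed. The complex surjectivity at $k$ distinct points follows, as you say, from the long exact sequence attached to $0\to\E\otimes\L^d\otimes\mathcal{I}_Z\to\E\otimes\L^d\to(\E\otimes\L^d)|_Z\to 0$ once $H^1(\X,\E\otimes\L^d\otimes\mathcal{I}_Z)=0$, and on a curve that vanishing is a degree count.

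The one place you misstate matters is in the final paragraph, where you identify the ``main obstacle'' as the non-compactness of $M^k\setminus\Delta_k$ and the possibility of collisions, and claim that $(k-1)$-very ampleness (jet separation) is ``the only nontrivial ingredient.'' This is not needed here and somewhat obscures why the argument works. Since the $x_i$ are assumed distinct, $Z=\{x_1,\dots,x_k\}$ is a reduced subscheme and $\mathcal{I}_Z=\mathcal{O}_\X(-x_1-\dots-x_k)$ is always a line bundle of degree exactly $d\deg(\L)+\deg(\E)-k$, independently of where the points sit or how close they are to one another. So the condition $d\deg(\L)+\deg(\E)-k>2\mathfrak{g}-2$ gives $H^1=0$ \emph{uniformly} over all of $M^k\setminus\Delta_k$ with no further input; there is no ``degeneration as $x_i\to x_j$'' to control, because colliding configurations lie outside the set under consideration. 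Jet separation ($(k-1)$-very ampleness) would be the relevant notion if the lemma were about non-reduced subschemes or continuity of $\odet{\ev^d_x}$ up to $\Delta_k$, but that is a different statement. Your Riemann--Roch argument as written is already complete and uniform; the appeal to very ampleness and Bergman-kernel peak sections is extraneous.
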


\begin{rem}
\label{rem dk to infinity}
The dimension $N_d$ of $\R\H$ does not depend on $k$, while the dimension of the target space of $\ev^d_x$ equals $k$. In particular, for any $d$, the linear map $\ev_x^d$ can only be surjective if $k \leq N_d$. This shows that $d_k \xrightarrow[k \to +\infty]{} +\infty$. Moreover, $(d_k)_{k \geq 1}$ is non-decreasing.
\end{rem}

\begin{rem}
\label{rem non degeneracy}
If $s_d \sim \mathcal{N}(0,\Id)$ in $\R\H$, then $\ev^d_x(s_d) = (s_d(x_1),\dots,s_d(x_k))$ is a centered Gaussian variable in $\bigoplus_{i=1}^k \R(\E \otimes \L^d)_{x_i}$ whose variance operator is $\ev^d_x (\ev^d_x)^*$. If $d \geq d_k$, then for all $x \in M^k \setminus \Delta_k$, this variance operator is positive and $\ev^d_x(s_d)$ is non-degenerate.
\end{rem}

\begin{dfn}[Kac--Rice density]
\label{def Rkd}
Let $k \in \N^*$, for all $d \geq d_k$ (cf.~Lemma~\ref{lem surjectivity ev dx}), we define the \emph{density function} $\mathcal{R}^k_d:M^k \setminus \Delta_k \to \R$ as follows:
\begin{equation*}
\forall x=(x_1,\dots,x_k) \in M^k \setminus \Delta_k, \qquad \mathcal{R}^k_d(x) = (2\pi)^{-\frac{k}{2}}\frac{\espcond{\prod_{i=1}^k \Norm{\nabla_{x_i} s_d}}{\ev_x^d(s_d)=0}}{\odet{\ev^d_x}}.
\end{equation*}
Here, $\nabla$ is any connection on $\E \otimes \L^d$ and $\espcond{\prod_{i=1}^k \Norm{\nabla_{x_i} s_d}}{\ev_x^d(s_d)=0}$ stands for the conditional expectation of $\prod_{i=1}^k \Norm{\nabla_{x_i} s_d}$ given that $\ev_x^d(s_d)=0$.
\end{dfn}

\begin{rem}
\label{rem choice of nabla}
Recall that if $s$ is a section and $s(x)=0$, then the derivative $\nabla_x s$ does not depend on the choice of the connection $\nabla$. In the conditional expectation appearing at the numerator in Definition~\ref{def Rkd}, we only consider derivatives of $s_d$ at the points $x_1,\dots,x_k$, under the condition that $s_d$ vanishes at these points. Hence $\mathcal{R}^k_d$ does not depend on the choice of $\nabla$.
\end{rem}

We can now state the Kac--Rice formula we are interested in, see~\cite[Propositions~2.5 and~2.9]{Anc2021}. See also~\cite[Theorem~5.5]{LP2019} in the case $k=2$. Recall that $\tilde{\nu}_d^k$ was defined by Definition~\ref{def nu A} and is the counting measure of the random set $\left(s_d^{-1}(0) \cap M\right)^k \setminus \Delta_k$. 

\begin{prop}[Kac--Rice formula]
\label{prop Kac Rice}
Let $k \in \N^*$ and let $d \geq d_k$. Let $s_d \sim \mathcal{N}(0,\Id)$ in $\R\H$, for any $\phi \in \mathcal{C}^0(M^k)$, we have:
\begin{equation*}
\esp{\prsc{\tilde{\nu}_d^k}{\phi}} = \int_{x \in M^k} \phi(x) \mathcal{R}^k_d(x) \rmes{M}^k.
\end{equation*}
where $\mathcal{R}^k_d$ is the density function defined by Definition~\ref{def Rkd}.
\end{prop}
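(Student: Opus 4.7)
This is a higher-order Kac--Rice formula, and I would derive it from the classical area-formula argument, adapted to the real Hermitian bundle setting. First I would localize: by a partition of unity on $M^k$ it suffices to establish the identity when $\phi$ is compactly supported in $U_1\times\dots\times U_k$, for pairwise disjoint small coordinate neighborhoods $U_i\subset M$ of the $x_i$, over each of which $\E\otimes\L^d$ admits a smooth unit-norm real trivialization. In these trivializations, the section $s_d$ is represented on $U_i$ by a smooth real Gaussian function $f_i$, and the evaluation $\ev_x^d(s_d)$ corresponds to the vector $(f_1(x_1),\dots,f_k(x_k))\in\R^k$; by Lemma~\ref{lem surjectivity ev dx}, for $d\ge d_k$ and $x\in M^k\setminus\Delta_k$ this is a non-degenerate centered Gaussian whose density at the origin equals $(2\pi)^{-k/2}\odet{\ev_x^d}^{-1}$ (cf.~Remark~\ref{rem non degeneracy}).

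Next I would represent the factorial counting measure as a limit. Almost surely $s_d$ has only non-degenerate real zeros, since real sections with a multiple real zero form a proper real analytic subvariety of $\R\H$, of Gaussian measure zero. On this full-measure event, provided $\epsilon>0$ is small enough that $\{\norm{s_d(x_i)}<\epsilon\}\cap\mathrm{supp}(\phi)$ stays inside $U_1\times\dots\times U_k$, the change of variable $t_i=f_i(x_i)$ on each $U_i$ yields
\begin{equation*}
\prsc{\tilde\nu_d^k}{\phi}=\lim_{\epsilon\to 0}\frac{1}{(2\epsilon)^k}\int_{M^k}\phi(x)\prod_{i=1}^k\bigl(\Norm{\nabla_{x_i}s_d}\,\mathbf{1}_{\{\norm{s_d(x_i)}<\epsilon\}}\bigr)\rmes{M}^k(x).
\end{equation*}
The distinctness requirement implicit in the definition of $\tilde\nu_d^k$ is automatic for small $\epsilon$, since $\phi$ is supported away from $\Delta_k$.

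Then I would take expectations on both sides and exchange $\mathbb{E}$ with $\lim_{\epsilon\to 0}$. For fixed $x\in M^k\setminus\Delta_k$, conditioning on $\ev_x^d(s_d)=t$ (well defined thanks to the non-degeneracy above) rewrites the inner expectation as
\begin{equation*}
\frac{1}{(2\epsilon)^k}\int_{[-\epsilon,\epsilon]^k}\espcond{\prod_{i=1}^k\Norm{\nabla_{x_i}s_d}}{\ev_x^d(s_d)=t}\,p_x(t)\,dt,
\end{equation*}
where $p_x$ is the Gaussian density of $\ev_x^d(s_d)$. By continuity at $0$ of $t\mapsto p_x(t)$ and of the regressed conditional expectation (standard for jointly Gaussian variables), this converges as $\epsilon\to 0$ to $p_x(0)\cdot\espcond{\prod_i\Norm{\nabla_{x_i}s_d}}{\ev_x^d(s_d)=0}$, which is exactly $\mathcal{R}^k_d(x)$.

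The main obstacle will be justifying the exchange of $\mathbb{E}$ and $\lim_{\epsilon\to 0}$. Since $\phi$ is supported away from $\Delta_k$, the Jacobian $\odet{\ev_x^d}$ is bounded below on $\mathrm{supp}(\phi)$ and $x\mapsto p_x(0)$ is bounded there; combined with the Gaussian tails of $\Norm{\nabla_{x_i}s_d}$ uniform in $x\in M$, this produces an $\epsilon$-independent integrable dominator, so Lebesgue's dominated convergence theorem applies. Summing the contributions of the partition of unity gives the identity of Proposition~\ref{prop Kac Rice}.
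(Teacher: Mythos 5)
The paper does not give its own proof of Proposition~\ref{prop Kac Rice}; it cites~\cite[Propositions~2.5 and~2.9]{Anc2019} (and~\cite[Theorem~5.5]{LP2019} for $k=2$). Your proposal reconstructs the statement by the standard coarea/smoothed-delta derivation, which is the right kind of argument, but as written it has a genuine gap in the localization. A finite family of products $U_1\times\dots\times U_k$ with \emph{pairwise disjoint} $U_i$ can only cover the configuration space $M^k\setminus\Delta_k$; no such product contains a neighborhood of a point of the large diagonal $\Delta_k$. So your partition-of-unity reduction only proves the identity for $\phi$ compactly supported away from $\Delta_k$, while the statement asserts it for all $\phi\in\mathcal{C}^0(M^k)$. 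Extending it requires an approximation step (e.g.\ $\phi_n=\phi\chi_n$ with $\chi_n$ cutting off a shrinking neighborhood of $\Delta_k$), and that step itself uses two non-trivial facts you don't invoke: on the left-hand side, that the total mass of $\tilde{\nu}^k_d$ is a.s.\ bounded by a deterministic constant (this follows from $\card(Z_d)\leq d\deg(\L)+\deg(\E)$, as in Lemma~\ref{lem L inf random variables}); on the right-hand side, that $\mathcal{R}^k_d$ stays bounded near $\Delta_k$ despite $\odet{\ev^d_x}\to 0$ there, which is precisely~\cite[Theorem~4.7]{Anc2019} and not something that comes for free from your construction.

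A secondary issue is that your dominator is aimed at the wrong exchange. The lower bound on $\odet{\ev^d_x}$ over $\mathrm{supp}(\phi)$, the bound on $p_x(0)$, and the Gaussian moment bounds give an $\epsilon$-independent integrable dominator for the \emph{outer} $x$-integral after you have already conditioned, i.e.\ they justify $\lim_{\epsilon\to 0}\esp{\cdot}=\int\phi\,\mathcal{R}^k_d\rmes{M}^k$ via Fubini and dominated convergence in $x$. They do not dominate the random variable $(2\epsilon)^{-k}\int_{M^k}\phi(x)\prod_i\bigl(\Norm{\nabla_{x_i}s_d}\mathbf{1}_{\{\norm{s_d(x_i)}<\epsilon\}}\bigr)\rmes{M}^k(x)$ as a function of $s_d$, which is what is needed to justify $\esp{\lim_{\epsilon\to 0}\cdot}=\lim_{\epsilon\to 0}\esp{\cdot}$. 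For that one wants a pathwise, $\epsilon$-uniform bound, e.g.\ by rewriting the $x$-integral via the area formula as an average over $t\in[-\epsilon,\epsilon]^k$ of level-set counting sums, and then bounding the number of solutions by a deterministic constant coming from the degree of $\E\otimes\L^d$. This is the step the proposal labels as the main obstacle, and the sketch given does not actually address it.
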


Let $k \geq 2$ and $d \in \N$. If $x \in \Delta_k$, then the evaluation map $\ev_x^d$ can not be surjective. Hence, the continuous map $x \mapsto \odet{\ev^d_x}$ from $M^k$ to $\R$ vanishes on $\Delta_k$, and one would expect $\mathcal{R}^k_d$ to be singular along the diagonal. Yet, Ancona showed that one can extend continuously $\mathcal{R}^k_d$ to the whole of $M^k$ and that the extension vanishes on $\Delta_k$, see~\cite[Theorem~4.7]{Anc2021}. Moreover, he showed that $d^{-\frac{k}{2}}\mathcal{R}^k_d$ is uniformly bounded on $M^k \setminus \Delta_k$ as $d \to +\infty$. We will use this last fact repeatedly in the proof of Theorem~\ref{thm main}, see Section~\ref{sec asymptotics of the central moments} below.

\begin{prop}[{\cite[Theorem~3.1]{Anc2021}}]
\label{prop bounded density}
For any $k\in \N^*$, there exists a constant $C_k >0$ such that, for all $d$ large enough, for all $x \in M^k \setminus \Delta_k$, we have $d^{-\frac{k}{2}}\mathcal{R}^k_d(x)\leq C_k$.
\end{prop}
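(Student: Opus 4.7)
The plan is to reduce the bound to a universal statement via the standard $d^{-\frac{1}{2}}$ rescaling coming from the near-diagonal expansion of the Bergman kernel $K_d$ of $\E \otimes \L^d$. At this scale, the Bergman kernel and all its derivatives admit a universal local scaling limit (the Bargmann--Fock model), and pointwise derivatives of a section are typically of size $\sqrt{d}$. Heuristically, $\odet{\ev_x^d}$ is of size $d^\frac{k}{2}$ when the points are well-separated, $\Norm{\nabla_{x_i} s_d}$ is of size $d^\frac{1}{2}$ for each $i$, and the conditional expectation in the numerator stays bounded; multiplying these contributions one recovers the expected $d^\frac{k}{2}$ growth of $\mathcal{R}^k_d$ on $M^k \setminus \Delta_k$.

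To turn this heuristic into a uniform bound, I would split $M^k \setminus \Delta_k$ into two regimes depending on a parameter $b>0$: a \emph{well-separated} regime $\Omega_d^{\text{far}} = \{x \mid \min_{i\neq j} \operatorname{dist}(x_i,x_j) \geq b \frac{\ln d}{\sqrt{d}}\}$ and a \emph{clustered} regime $\Omega_d^{\text{near}}$ consisting of its complement (intersected with $M^k\setminus\Delta_k$). In $\Omega_d^{\text{far}}$, the exponential off-diagonal decay of $K_d$ at scale $d^{-\frac{1}{2}}$ (choosing $b$ large enough) makes the Gram matrix $(K_d(x_i,x_j))_{i,j}$ a small perturbation of its diagonal part, so that $\odet{\ev^d_x}$ is bounded below by a constant multiple of $\prod_i K_d(x_i,x_i)^{\frac{1}{2}}$, and the Gaussian conditional expectation in the numerator approximately factorizes over $i$. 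This reduces the bound on $\mathcal{R}^k_d$ to the case $k=1$, which is uniform by compactness of $M$ and the standard on-diagonal Bergman kernel expansion.

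In $\Omega_d^{\text{near}}$, the configuration $x$ is contained in a ball of radius $O(d^{-\frac{1}{2}}\ln d)$ around some point $x_* \in M$. Working in a Kähler normal chart around $x_*$ and a local real holomorphic trivialization of $\E \otimes \L^d$, I would introduce rescaled coordinates $y_i = \sqrt{d}(x_i - x_*)$. The covariance of the rescaled process and of its first derivatives converges, in $\mathcal{C}^\infty_{\text{loc}}$, to the universal model covariance; a change of variables produces the global factor $d^\frac{k}{2}$ (from $\odet{\ev^d_x}$ and $\prod_i \Norm{\nabla_{x_i}s_d}$ together with the rescaling of conditional Gaussian measures). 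Thus $d^{-\frac{k}{2}}\mathcal{R}^k_d$ is, on $\Omega_d^{\text{near}}$, a small perturbation of a universal continuous function of the rescaled configuration $(y_1,\dots,y_k)$.

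The main obstacle is the apparent singularity as some of the rescaled points collide: a priori both numerator and denominator degenerate on the diagonal, and one needs to show that the ratio stays bounded. This is where the Olver multispace construction enters, exactly as in~\cite{Anc2019}: replacing the configuration space of $k$ ordered points by the $k$-th Olver multispace, which compactifies it by resolving collisions via equality of jets, the evaluation--jet map becomes surjective in the scaling limit, and the numerator acquires enough vanishing to cancel the vanishing of $\odet{\ev_x^d}$. Consequently $d^{-\frac{k}{2}}\mathcal{R}^k_d$ extends continuously to the (compact) multispace, uniformly in $d$ large, and the desired bound $C_k$ follows by compactness. Combining this with the estimate on $\Omega_d^{\text{far}}$ yields the proposition.
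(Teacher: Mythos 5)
The paper cites this proposition from~\cite{Anc2019} without reproducing the proof, so there is no internal argument here for you to match. Your sketch nonetheless identifies the two mechanisms that the cited proof rests on and that this paper explicitly attributes to~\cite{Anc2019} in the introduction and in Remark~\ref{rem Michele's paper}: the $d^{-\frac{1}{2}}$ rescaling to a universal Bargmann--Fock local model, and the Olver multispace compactification, which resolves the apparent $0/0$ singularity of $\mathcal{R}^k_d$ along~$\Delta_k$ by recording jet data at collisions.

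The concrete gap is the binary split $\Omega_d^{\text{far}}\sqcup\Omega_d^{\text{near}}$. The complement of ``every pair is at least $b\frac{\ln d}{\sqrt{d}}$ apart'' is not ``all $k$ points fit in a ball of radius $O(d^{-\frac{1}{2}}\ln d)$'': a configuration can contain both a nearby pair and a far-away point. Your rescaling step $y_i=\sqrt{d}(x_i-x_*)$ therefore does not apply on all of $\Omega_d^{\text{near}}$ as you defined it, and the multispace compactness argument would not cover that set. What is actually needed is a cluster decomposition of the index set, exactly in the spirit of what this paper sets up in Section~\ref{subsec cutting MA into pieces} (Definitions~\ref{def Gdp}--\ref{def Idp}): group the indices by connected components of the clustering graph, factorize $\mathcal{R}^k_d$ across well-separated clusters using the multiplicativity estimate of Proposition~\ref{prop multiplicativity Rk} applied inductively, and only then rescale around a center and invoke multispace compactness within each individual cluster. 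A further small point: \cite{Anc2019} obtains the required Bergman-kernel estimates via peak sections rather than by quoting~\cite{MM2007,MM2015} (cf.\ Remark~\ref{rem Michele's paper}); this is a cosmetic difference, but worth flagging since your proposal leans on the latter.
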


Let $d \geq 1$ and let $s_d \in \R\H$ be a standard Gaussian. A fundamental idea in our problem is that the values (and more generally the $k$-jets) of~$s_d$ at two points $x$ and $y \in M$ are ``quasi-independent'' if $x$ and $y$ are far from one another, at scale~$d^{-\frac{1}{2}}$. More precisely, $(s_d(x))_{x \in M}$ defines a Gaussian process with values in $\R(\E \otimes \L^d)$ whose correlation kernel is the Bergman kernel $E_d$ of $\E \otimes \L^d$. Recall that $E_d$ is the integral kernel of the orthogonal projection from the space of square integrable sections of $\E \otimes \L^d$ onto $\H$, for the inner product defined by Equation~\eqref{eq def L2 inner product}. In particular, for any $x,y \in \X$, we have $E_d(x,y) \in (\E \otimes \L^d)_x \otimes (\E \otimes \L^d)^*_y$. For our purpose, it is more convenient to consider the normalized Bergman kernel
\begin{equation*}
e_d:(x,y) \mapsto E_d(x,x)^{-\frac{1}{2}}E_d(x,y)E_d(y,y)^{-\frac{1}{2}}.
\end{equation*}
For any $x \in \X$, the map $E_d(x,x)$ is an endomorphism of the $1$-dimensional space $(\E \otimes \L^d)_x$, hence can be seen as a scalar. Note that $e_d$ is the correlation kernel of the normalized process $\left(E_d(x,x)^{-\frac{1}{2}}s_d(x)\right)_{x \in M}$, which has unit variance and the same zero set as $(s_d(x))_{x \in M}$.

The normalized Bergman kernel $e_d$ admits a universal local scaling limit at scale~$d^{-\frac{1}{2}}$ around any point of~$\X$ (cf.~\cite[Theorem~4.2.1]{MM2007}). Moreover, the correlations are exponentially decreasing at scale~$d^{-\frac{1}{2}}$, in the sense that there exists $C>0$ such that $\Norm{e_d(x,y)} = O\!\left(\exp\left(-C\sqrt{d}\rho_g(x,y)\right)\right)$ as $d \to +\infty$, uniformly in $(x,y)$, where $\rho_g$ is the geodesic distance. Similar estimates hold for the derivatives of $e_d$ with the same constant $C$. We refer to~\cite[Theorem~1]{MM2015} for a precise statement. These facts were extensively used in~\cite{Anc2021,Let2019,LP2019} and we refer to these papers for a more detailed discussion of how estimates for the Bergman kernel are used in the context of random real algebraic geometry. An important consequence of these estimates that we use in the present paper is Proposition~\ref{prop multiplicativity Rk} below.

\begin{dfn}
\label{def bp}
Let $p \in \N$, we denote by $b_p = \frac{1}{C}\left(1+\frac{p}{4}\right)$, where $C>0$ is the same as above. That is $C$ is the constant appearing in the exponential in~\cite[Theorem~1, Equation~(1.3)]{MM2015}.
\end{dfn}

\begin{prop}[{\cite[Proposition~3.2]{Anc2021}}]
\label{prop multiplicativity Rk}
Let $p \geq 2$. Recall that $\rho_g$ denotes the geodesic distance and that $b_p$ is defined by Definition~\ref{def bp}. The following holds uniformly for all $k \in \{2,\dots,p\}$, for all $A$ and $B \subset \{1,\dots,k\}$ disjoint such that $A \sqcup B = \{1,\dots,k\}$, for all $x \in M^k \setminus \Delta_k$ such that for all $a \in A$ and $b \in B$ we have $\rho_g(x_a,x_b) \geq b_p \frac{\ln d}{\sqrt{d}}$:
\begin{equation*}
\mathcal{R}^k_d(x) = \mathcal{R}^{\norm{A}}_d(\underline{x}_A)\mathcal{R}^{\norm{B}}_d(\underline{x}_B) + O(d^{\frac{k}{2}-\frac{p}{4}-1}).
\end{equation*}
Here we used the notations defined in Section~\ref{subsec partitions products and diagonal inclusions} (see Notation~\ref{ntn product indexed by A}), and by convention $\mathcal{R}_d^0=1$ for all $d$.
\end{prop}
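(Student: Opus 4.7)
The plan is to write $\mathcal{R}^k_d$ as the ratio of the conditional expectation over the Jacobian appearing in Definition~\ref{def Rkd}, and to quasi-factorize each of the two pieces separately along the partition $\{A,B\}$ thanks to the exponential decay of the normalized Bergman kernel at scale $d^{-1/2}$. Set $D_d(x) = \odet{\ev^d_x} = (\det \Sigma_x)^{1/2}$ and $N_d(x) = (2\pi)^{-k/2}\espcond{\prod_{i=1}^k \Norm{\nabla_{x_i} s_d}}{\ev^d_x(s_d)=0}$, where $\Sigma_x$ is the covariance matrix of $(s_d(x_i))_{1 \leq i \leq k}$, so that $\mathcal{R}^k_d(x) = N_d(x)/D_d(x)$. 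The entries of $\Sigma_x$ are $E_d(x_i,x_j)$, and after the scaling $D = \mathrm{diag}(E_d(x_i,x_i)^{1/2})$ the normalized matrix $\tilde\Sigma_x = D^{-1}\Sigma_x D^{-1}$ has entries $e_d(x_i,x_j)$. Reordering indices so that those in $A$ precede those in $B$ yields a $2\times 2$ block decomposition of $\tilde\Sigma_x$ whose off-diagonal block $(e_d(x_a,x_b))_{a\in A, b\in B}$ is of size $O(\exp(-C\sqrt{d}\rho_g(x_a,x_b)))$ by~\cite[Theorem~1]{MM2015}, which is $O(d^{-1-p/4})$ under the hypothesis thanks to the choice $b_p = (1+p/4)/C$. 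Analogous decay bounds hold for the partial derivatives of $e_d$ up to any fixed order, with the same constant $C$.

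For the Jacobian, the Schur-complement formula gives
\[
\det \tilde\Sigma_x = \det \tilde\Sigma_{AA}\,\det\!\bigl(\tilde\Sigma_{BB}-\tilde\Sigma_{BA}\tilde\Sigma_{AA}^{-1}\tilde\Sigma_{AB}\bigr) = \det\tilde\Sigma_{AA}\det\tilde\Sigma_{BB} + O(d^{-2-p/2}),
\]
provided the diagonal blocks $\tilde\Sigma_{AA}, \tilde\Sigma_{BB}$ remain uniformly positive-definite as $d\to+\infty$ (which follows from the Bergman kernel scaling asymptotics that also underlie Lemma~\ref{lem surjectivity ev dx}). Multiplying back by $\prod_i E_d(x_i,x_i)$ and taking square roots yields $D_d(x) = \odet{\ev^d_{\underline{x}_A}}\,\odet{\ev^d_{\underline{x}_B}}(1+O(d^{-1-p/4}))$. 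For the numerator, I would work with the joint Gaussian vector $J_x = (s_d(x_i),\nabla_{x_i}s_d)_{1\leq i\leq k}$, whose covariance matrix involves $E_d$ and its first two derivatives and presents the same block structure: after rescaling the gradient entries by $d^{-1/2}$ the diagonal blocks stay in a fixed compact set of positive-definite matrices while the cross-blocks remain $O(d^{-1-p/4})$. The conditional law of $(\nabla_{x_i}s_d)_i$ given $\ev^d_x(s_d)=0$ is the centered Gaussian with covariance equal to the Schur complement of the evaluation block in $\mathrm{Cov}(J_x)$, and the same perturbation bound shows this Schur complement is block-diagonal up to a cross-block of size $O(d^{-1-p/4})$. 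Since the map $\Sigma \mapsto \esp{\prod_i\Norm{Y_i}}$ for $Y\sim\mathcal{N}(0,\Sigma)$ is smooth on compact sets of positive-definite matrices, this perturbation propagates to a factorization of $N_d(x)$ along $\{A,B\}$, again with relative error $O(d^{-1-p/4})$.

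Combining gives $\mathcal{R}^k_d(x) = \mathcal{R}^{\norm{A}}_d(\underline{x}_A)\mathcal{R}^{\norm{B}}_d(\underline{x}_B)(1+O(d^{-1-p/4}))$, and Proposition~\ref{prop bounded density} turns this relative error into the absolute error $O(d^{k/2-p/4-1})$ claimed. The main obstacle is the uniform control in $d$ on all the perturbation constants: one needs $\tilde\Sigma_{AA}$ (and its jet-enlarged analogue) to remain uniformly well-conditioned as $d\to+\infty$ across the whole of $M^{\norm{A}}\setminus\Delta_{\norm{A}}$, including close to $\Delta_{\norm{A}}$ where the naive Jacobian degenerates. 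This is precisely where the compactness of the Olver multispaces enters as in~\cite{Anc2019}: it permits treating the degenerate and non-degenerate regimes uniformly by passing to a blown-up configuration space on which the rescaled Bergman kernel admits a well-behaved universal limit, and is the reason the argument works in dimension one but resists direct generalization to higher dimensions.
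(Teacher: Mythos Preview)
Your proposal is correct and follows essentially the same route as the paper, which in fact does not give a self-contained argument but simply refers to \cite[Proposition~3.2]{Anc2019} and explains two adjustments: adding the twist by $\E$ leaves the Bergman kernel asymptotics unchanged, and replacing the cut-off constant $1/C$ by $b_p=(1+p/4)/C$ upgrades the error from $O(d^{k/2-1})$ to $O(d^{k/2-p/4-1})$ via the off-diagonal decay bound $O(d^{k/2}e^{-C\sqrt{d}L})$. Your Schur-complement unpacking of numerator and denominator is the natural way to make the cited argument explicit, and you correctly identify both the mechanism (small cross-blocks in the normalized kernel and its jets) and the genuine obstacle (uniform control near $\Delta_{\norm{A}}$, handled by Olver multispace compactness). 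The paper's phrasing bounds the difference $\norm{\mathcal{R}^k_d(x)-\mathcal{R}^{\norm{A}}_d(\underline{x}_A)\mathcal{R}^{\norm{B}}_d(\underline{x}_B)}$ directly by a constant times the $\mathcal{C}^k$-norm of $e_d$ on the far set, rather than tracking relative errors of $N_d$ and $D_d$ separately as you do, but this is only a difference in packaging.
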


\begin{proof}
Proposition~\ref{prop multiplicativity Rk} is the same as \cite[Proposition~3.2]{Anc2021} but for two small points. We refer to~\cite{Anc2021} for the core of the proof of this proposition. Here, let us just explain what the differences are between Proposition~\ref{prop multiplicativity Rk} and \cite[Proposition~3.2]{Anc2021}, and how these differences affect the proof.

\begin{itemize}
\item Here we consider a line bundle of the form $\E \otimes \L^d$, while in \cite{Anc2021} the author only considers~$\L^d$, which corresponds to the case where $\E$ is trivial. In the proof of~\cite[Proposition~3.2]{Anc2021}, the geometry of line bundle $\L^d$ only appears through the leading term in the asymptotics of its Bergman kernel, as $d \to +\infty$. More precisely, the necessary estimates are those of~\cite[Theorem~4.2.1]{MM2007} and~\cite[Theorem~1]{MM2015}. One can check that, if we replace $\L^d$ by $\E \otimes \L^d$ in the estimates of~\cite{MM2007,MM2015}, we obtain estimates of the same form for the Bergman kernel of $\E \otimes \L^d$. Thus, the addition of the line bundle $\E$ in the problem does not affect the proof, except in a typographical way.

\item In the statement of~\cite[Proposition~3.2]{Anc2021}, the constant $b_p$ is replaced by $\frac{1}{C}=b_0$ and the error term is replaced by $O(d^{\frac{k}{2}-1})$. Let us explain how using $b_p$ instead of $b_0$ yields an error term of the form $O(d^{\frac{k}{2}-\frac{p}{4}-1})$. The error term in Proposition~\ref{prop multiplicativity Rk} and~\cite[Proposition~3.2]{Anc2021} comes from the off-diagonal decay estimate of~\cite[Theorem~1]{MM2015}. More precisely, in both cases, the term:
\begin{equation*}
\norm{\mathcal{R}^k_d(x) - \mathcal{R}^{\norm{A}}_d(\underline{x}_A)\mathcal{R}^{\norm{B}}_d(\underline{x}_B)}
\end{equation*}
is bounded from above by some constant (that does not depend on $x$ and $d$) times the $\mathcal{C}^k$-norm of the normalized Bergman kernel $e_d$ of $\E \otimes \L^d$ restricted to $\left\{(x,y) \in M^2 \mid \rho_g(x,y) \geq L \right\}$, where
\begin{equation*}
L = \min_{(a,b) \in A \times B} \rho_g(x_a,x_b).
\end{equation*}
By~\cite[Theorem~1]{MM2015}, this term is $O\!\left(d^\frac{k}{2}e^{-C\sqrt{d}L}\right)$. In the setting of~\cite[Proposition~3.2]{Anc2021}, we have $L \geq \frac{1}{C} \frac{\ln d}{\sqrt{d}}$, which gives an error term of the form $O(d^{\frac{k}{2}-1})$. In Proposition~\ref{prop multiplicativity Rk}, we have $L \geq \frac{1}{C}(1+\frac{p}{4})\frac{\ln d}{\sqrt{d}}$, so that the error term is indeed $O(d^{\frac{k}{2}-\frac{p}{4}-1})$.\qedhere
\end{itemize}
\end{proof}

\begin{rems}
\label{rem Michele's paper}
We conclude this section with some comments for the reader who might be interested in the proof of~\cite[Proposition~3.2]{Anc2021}.
\begin{itemize}
\item In~\cite{Anc2021}, estimates for the Bergman kernel (i.e. the correlation function of the random process under study) are obtained using peak sections. This alternative method yields the necessary estimates without having to use the results of~\cite{MM2007,MM2015}.

\item In~\cite{Anc2021}, the proof of Proposition~3.2 is written for $k=2$ and $\norm{A}=1=\norm{B}$, for clarity of exposition. The extension to $k >2$ is non-trivial, and the proof for $k >2$ requires in fact the full power of the techniques developed in \cite[Section~4]{Anc2021}. More recently, the authors developed similar techniques for smooth stationary Gaussian processes in $\R$, see~\cite[Theorem~1.14]{AL2021}.
\end{itemize}
\end{rems}


\section{Asymptotics of the central moments}
\label{sec asymptotics of the central moments}

The goal of this section is to prove Theorem~\ref{thm main}. In Section~\ref{subsec an integral expression of the central moments} we derive an integral expression of the central moments we want to estimate, see Lemma~\ref{lem integral expression} below. Then, in Section~\ref{subsec cutting MA into pieces}, we define a decomposition of the manifolds $M^A$, where $M$ is as in Section~\ref{subsec geometric setting} and $A$ is a finite set. In Sections~\ref{subsec an upper bound on the contribution of each piece}, \ref{subsec contribution of the partitions with an isolated point} and~\ref{subsec contribution of the partitions into pairs}, we compute the contributions of the various pieces of the decomposition defined in Section~\ref{subsec cutting MA into pieces} to the asymptotics of the integrals appearing in Lemma~\ref{lem integral expression}. Finally, we conclude the proof of Theorem~\ref{thm main} in Section~\ref{subsec proof of the main theorem}.


\subsection{An integral expression of the central moments}
\label{subsec an integral expression of the central moments}

The purpose of this section is to derive a tractable integral expression for the central moments of the form $m_p(\nu_d)(\phi_1,\dots,\phi_p)$, defined by Definition~\ref{def m p nu d} and appearing in Theorem~\ref{thm main}. This is done in Lemma~\ref{lem integral expression} below. Before stating and proving this lemma, we introduce several concepts and notations that will be useful to deal with the combinatorics of this section and the following ones.

Recall that we already defined the set $\pa_A$ of partitions of a finite set $A$ (see Definition~\ref{def partitions}). The next concept we need to introduce is that of induced partition.

\begin{dfn}[Induced partition]
\label{def induced partition}
Let $B$ be a finite set, let $A \subset B$ and let $\I \in \pa_B$. The partition $\I$ defines a partition $\I_A = \left\{ I \cap A \mvert I \in \I, I \cap A \neq \emptyset \right\}$ of $A$, which is called the partition of $A$ \emph{induced} by $\I$.
\end{dfn}

In this paper, we will only consider the case where $\I \in \pa_B$ and $A$ is the reunion of some of the elements of $\I$, that is: for any $I \in \I$, either $I \subset A$ or $I \cap A = \emptyset$. This condition is equivalent to $\I_A \subset \I$, and in this case we have $\I_A = \{I \in \I \mid I \subset A\}$.

Another useful notion is the following order relation on partitions.

\begin{dfn}[Order on partitions]
\label{def order on partitions}
Let $A$ be a finite set and let $\I,\J \in \pa_A$. We denote by $\J \leq \I$ and we say that $\J$ is \emph{finer} than $\I$ if $\J$ is obtained by subdividing the elements of $\I$. That is, for any $J \in \J$ there exists $I \in \I$ such that $J \subset I$. If $\J \leq \I$ and $\J \neq \I$, we say that $\J$ is \emph{strictly finer} than $\I$, which we denote by $\J < \I$.
\end{dfn}

One can check the following facts. Given a finite set $A$, the relation $\leq$ defines a partial order on $\pa_A$ such that $\I \mapsto \norm{\I}$ is decreasing. The partially ordered set $\left(\pa_A,\leq \right)$ admits a unique maximum equal to $\{A\}$. It also admits a unique minimum equal to $\left\{ \{a\} \mvert a \in A \right\}$, that we denote by $\I_0(A)$ as in Remark~\ref{rem diagonal inclusions}. If $A$ is of the form $\{1,\dots,k\}$, we use the simpler notations $\I_0(k)=\I_0(A)$. Finally, note that $\J \leq \I$ if and only if $\J = \bigsqcup_{I \in \I} \J_I$.

The last thing we need to define is the notion of subset adapted to a partition. A subset $A$ of a finite set $B$ is adapted to the partition $\I \in \pa_B$ if $B \setminus A$ is a union of singletons of $\I$. In other words, $A$ is adapted to $\I$ if and only if $\I = \I_A \sqcup \{\{i\} \mid i \in B \setminus A\}$, or equivalently if and only if $\I \leq \{A\} \sqcup \I_0(B \setminus A)$. Here is the formal definition.

\begin{dfn}[Subset adapted to a partition]
\label{def subset adapted to I}
Let $B$ be a finite set and let $\I \in \pa_B$, we denote~by:
\begin{equation*}
\mathcal{S}_\I = \left\{A \subset B \mvert \forall I \in \I, \text{ if } \norm{I} \geq 2, \text{ then } I \subset A \right\}.
\end{equation*}
A subset $A \in \mathcal{S}_\I$ is said to be \emph{adapted} to $\I$.
\end{dfn}

The set $\mathcal{S}_\I$ will appear as an index set in the integral expression derived in Lemma~\ref{lem integral expression} below. The only thing we need to know about it is the following natural result.

\begin{lem}
\label{lem subsets adapted to I}
Let $B$ be any finite set, then the map $(A,\I) \mapsto (A,\I_A)$ defines a bijection from $\{(A,\I) \mid \I\in \pa_B, A \in \mathcal{S}_\I \}$ to $\{ (A,\J) \mid A \subset B, \J \in \pa_A \}$.
\end{lem}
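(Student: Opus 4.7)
The plan is to exhibit an explicit two-sided inverse. Given $(A,\J)$ with $A\subset B$ and $\J\in\pa_A$, define
\begin{equation*}
\Phi(A,\J) = \bigl(A,\ \J\sqcup \I_0(B\setminus A)\bigr),
\end{equation*}
where $\I_0(B\setminus A)=\{\{b\}\mid b\in B\setminus A\}$ as in Remark~\ref{rem diagonal inclusions}. First I would check that $\Phi$ lands in the source set: $\J\sqcup\I_0(B\setminus A)$ is a disjoint union of non-empty subsets of $B$ whose union is $A\sqcup(B\setminus A)=B$, so it is an element of $\pa_B$; moreover every block of size at least $2$ belongs to $\J$ and is therefore a subset of $A$, so $A\in\mathcal{S}_{\J\sqcup\I_0(B\setminus A)}$.

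Next I would verify the two compositions. For $(A,\J)$ in the target set, the partition $\I=\J\sqcup\I_0(B\setminus A)$ satisfies $\I_A=\{I\in\I\mid I\subseteq A\}=\J$, because the singletons in $\I_0(B\setminus A)$ meet $A$ trivially. Hence $(A,\I_A)=(A,\J)$. In the other direction, starting from $(A,\I)$ with $A\in\mathcal{S}_\I$, the defining condition of $\mathcal{S}_\I$ means that every $I\in\I$ with $\norm{I}\geq 2$ lies in $A$, while every $I\in\I$ not contained in $A$ must intersect $B\setminus A$, hence (being disjoint from any block contained in $A$) must itself be a singleton in $B\setminus A$. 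Conversely, every $b\in B\setminus A$ belongs to a block of $\I$ which, by the preceding remark, is the singleton $\{b\}$. Therefore
\begin{equation*}
\I \;=\; \{I\in\I\mid I\subseteq A\}\,\sqcup\,\{\{b\}\mid b\in B\setminus A\} \;=\; \I_A\sqcup\I_0(B\setminus A),
\end{equation*}
so $\Phi(A,\I_A)=(A,\I)$, which finishes the proof.

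There is no serious obstacle here; the only thing to be careful about is the case $A=B$, for which $\I_0(B\setminus A)=\emptyset$ and the formulas above still make sense with the convention that the empty partition is the unique partition of the empty set (cf.~Definition~\ref{def partitions}). The argument is otherwise a direct unpacking of Definitions~\ref{def induced partition} and~\ref{def subset adapted to I}.
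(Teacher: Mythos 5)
Your proof is correct and takes essentially the same approach as the paper: you exhibit the inverse map $(A,\J) \mapsto (A,\, \J \sqcup \{\{b\} \mid b \in B \setminus A\})$, which is exactly what the paper does (the paper simply states this inverse without verifying the compositions, whereas you spell out the details). The only cosmetic remark is that the parenthetical justification "(being disjoint from any block contained in $A$)" is a slight detour; the cleaner reason that a block $I \not\subseteq A$ is a singleton is just the contrapositive of the defining condition of $\mathcal{S}_\I$, namely $\norm{I}\geq 2 \Rightarrow I \subseteq A$.
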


\begin{proof}
The inverse map is given by $(A,\J) \mapsto (A, \J \sqcup \{\{i\} \mid i \in B \setminus A\})$.
\end{proof}

We can now derive the integral expression of the central moments we are looking for. We will make use of the various notations introduced in Section~\ref{subsec partitions products and diagonal inclusions}.

\begin{lem}
\label{lem integral expression}
Let $p \geq 2$ and let $\phi_1,\dots,\phi_p \in \mathcal{C}^0(M)$. Let $d_p \in \N$ be given by Lemma~\ref{lem surjectivity ev dx}, for all $d \geq d_p$ we have:
\begin{equation*}
m_p(\nu_d)(\phi_1,\dots,\phi_p)= \sum_{\I \in \pa_p} \int_{\underline{x}_\I \in M^\I} \iota_\I^*\phi(\underline{x}_\I)\, \D^\I_d(\underline{x}_\I) \rmes{M}^{\norm{\I}},
\end{equation*}
where, for any finite set $B$, any $\I \in \pa_B$ and any $\underline{x}_\I = (x_I)_{I \in \I} \in M^\I$,
\begin{equation*}
\D^\I_d(\underline{x}_\I) = \sum_{A \in \mathcal{S}_\I} (-1)^{\norm{B} - \norm{A}} \mathcal{R}_d^{\norm{\I_A}}(\underline{x}_{\I_A}) \prod_{i \notin A} \mathcal{R}_d^1(x_{\{i\}}).
\end{equation*}
Here, $\mathcal{S}_\I$ is the one we defined in Definition~\ref{def subset adapted to I}.
\end{lem}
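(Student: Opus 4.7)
The plan is to expand the product defining $m_p(\nu_d)(\phi_1,\dots,\phi_p)$, reduce each resulting expectation to an integral using the Kac--Rice formula (Proposition~\ref{prop Kac Rice}), and then reorganize the combinatorics so that the sum is indexed by partitions $\I \in \pa_p$.

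First, expanding the $p$-fold product $\prod_{i=1}^p (\prsc{\nu_d}{\phi_i} - \esp{\prsc{\nu_d}{\phi_i}})$ yields
\begin{equation*}
m_p(\nu_d)(\phi_1,\dots,\phi_p) = \sum_{A \subset \{1,\dots,p\}} (-1)^{p-\norm{A}} \esp{\prod_{i \in A}\prsc{\nu_d}{\phi_i}} \prod_{i \notin A} \esp{\prsc{\nu_d}{\phi_i}},
\end{equation*}
which is licit because each $\prsc{\nu_d}{\phi_i}$ admits moments of all orders by Lemma~\ref{lem L inf random variables}. The product $\prod_{i \in A} \prsc{\nu_d}{\phi_i}$ equals $\prsc{\nu_d^A}{\phi_A}$ with the notation of Section~\ref{subsec partitions products and diagonal inclusions}, so by Lemma~\ref{lem decomposition nu ks},
\begin{equation*}
\prod_{i \in A}\prsc{\nu_d}{\phi_i} = \prsc{\nu_d^A}{\phi_A} = \sum_{\J \in \pa_A} \prsc{\tilde\nu_d^\J}{\iota_\J^*\phi_A}.
\end{equation*}

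Next, since $d \geq d_p \geq d_k$ for every $k \leq p$ (by Remark~\ref{rem dk to infinity}), we can apply Proposition~\ref{prop Kac Rice} termwise, obtaining
\begin{equation*}
\esp{\prod_{i \in A}\prsc{\nu_d}{\phi_i}} = \sum_{\J \in \pa_A} \int_{M^\J} \iota_\J^*\phi_A(\underline{y}_\J)\, \mathcal{R}_d^{\norm{\J}}(\underline{y}_\J)\, \rmes{M}^{\norm{\J}},
\end{equation*}
and similarly $\esp{\prsc{\nu_d}{\phi_i}} = \int_M \phi_i\, \mathcal{R}_d^1\, \rmes{M}$ for the remaining factors. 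Substituting these integral expressions back gives an absolutely summable iterated sum/integral.

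Finally, the combinatorial repackaging is the content of Lemma~\ref{lem subsets adapted to I}: the pairs $(A,\J)$ with $A \subset \{1,\dots,p\}$ and $\J \in \pa_A$ are in bijection with the pairs $(A,\I)$ where $\I \in \pa_p$ and $A \in \mathcal{S}_\I$, via $\I = \J \sqcup \{\{i\} \mid i \notin A\}$ (so that $\J = \I_A$). For such $(A,\I)$, the product measure $\rmes{M}^{\norm{\J}} \otimes \rmes{M}^{\norm{\{1,\dots,p\}\setminus A}}$ on $M^\J \times M^{\{1,\dots,p\}\setminus A}$ is exactly $\rmes{M}^{\norm{\I}}$ on $M^\I$, and the integrand factors as
\begin{equation*}
\iota_\I^*\phi(\underline{x}_\I) = \iota_{\I_A}^*\phi_A(\underline{x}_{\I_A}) \cdot \prod_{i \notin A} \phi_i(x_{\{i\}}),
\end{equation*}
because $\phi = \boxtimes_{i=1}^p \phi_i$ splits multiplicatively across the blocks of $\I$. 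Gathering the $\mathcal{R}_d$--factors into $\mathcal{R}_d^{\norm{\I_A}}(\underline{x}_{\I_A}) \prod_{i \notin A} \mathcal{R}_d^1(x_{\{i\}})$ and swapping the (finite) sum over $A \in \mathcal{S}_\I$ with the integral then yields precisely the asserted formula with the function $\D_d^\I$. No real obstacle is expected; the only care needed is bookkeeping to check that the sign $(-1)^{p-\norm{A}}$ and the bijection of Lemma~\ref{lem subsets adapted to I} match the definition of $\D_d^\I$.
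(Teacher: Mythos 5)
Your proposal is correct and follows exactly the same route as the paper's proof: expand the product defining the central moment, rewrite $\prod_{i\in A}\prsc{\nu_d}{\phi_i}$ via Lemma~\ref{lem decomposition nu ks}, apply Proposition~\ref{prop Kac Rice} termwise, and reindex using the bijection of Lemma~\ref{lem subsets adapted to I}. The extra bookkeeping you spell out (the factorization of $\iota_\I^*\phi$ across blocks and the matching of product measures) is correct and merely makes explicit what the paper leaves implicit.
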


\begin{proof}
By definition, $m_p(\nu_d)(\phi_1,\dots,\phi_p)$ is the expectation of a linear combination of integrals over sets of the form $Z_d^k$, with $k \in \{1,\dots,p\}$. We subdivide each $Z_d^k$ according to the various diagonals in $M^k$. The corresponding splitting of the counting measure $\nu_d^k$ of $Z_d^k$ was computed in Lemma~\ref{lem decomposition nu ks}. We can then apply the Kac--Rice formula to each term. The result follows by carefully reordering the terms.

We start from the definition of $m_p(\nu_d)$, see Definition~\ref{def m p nu d}. Developing the product, we get:
\begin{align*}
m_p(\nu_d)(\phi_1,\dots,\phi_p) &= \sum_{A \subset \{1,\dots,p\}} (-1)^{p-\norm{A}} \esp{\prod_{i \in A} \prsc{\nu_d}{\phi_i}} \prod_{i \notin A}\esp{\prsc{\nu_d}{\phi_i}}\\
&= \sum_{A \subset \{1,\dots,p\}} (-1)^{p-\norm{A}} \esp{\prsc{\nu_d^A}{\phi_A}} \prod_{i \notin A}\esp{\prsc{\nu_d}{\phi_i}}\\
&= \sum_{A \subset \{1,\dots,p\}} \sum_{\I \in \pa_A} (-1)^{p-\norm{A}} \esp{\prsc{\tilde{\nu}_d^\I}{\iota_\I^*\phi_A}} \prod_{i \notin A}\esp{\prsc{\nu_d}{\phi_i}},
\end{align*}
where the last equality comes from Lemma~\ref{lem decomposition nu ks}. By the Kac--Rice formulas of Proposition~\ref{prop Kac Rice}, this equals:
\begin{equation*}
\sum_{A \subset \{1,\dots,p\}} \sum_{\I \in \pa_A} (-1)^{p-\norm{A}} \left(\int_{M^\I} (\iota_\I^*\phi_A) \mathcal{R}_d^{\norm{\I}} \rmes{M}^{\norm{\I}}\right) \prod_{i \notin A} \left(\int_M \phi_i \mathcal{R}_d^1 \rmes{M}\right).
\end{equation*}
By Lemma~\ref{lem subsets adapted to I}, we can exchange the two sums and obtain the following:
\begin{multline*}
\sum_{\I \in \pa_p} \sum_{A \in \mathcal{S}_\I} (-1)^{p-\norm{A}} \left(\int_{M^{\I_A}} (\iota_{\I_A}^*\phi_A) \mathcal{R}_d^{\norm{\I_A}} \rmes{M}^{\norm{\I_A}}\right) \prod_{i \notin A} \left(\int_M \phi_i \mathcal{R}_d^1 \rmes{M}\right)\\
\begin{aligned}
&= \sum_{\I \in \pa_p} \sum_{A \in \mathcal{S}_\I} (-1)^{p-\norm{A}} \int_{\underline{x}_\I \in M^\I} \iota_\I^*\phi(\underline{x}_\I) \mathcal{R}_d^{\norm{\I_A}}(\underline{x}_{\I_A}) \prod_{i \notin A} \mathcal{R}_d^1(x_{\{i\}}) \rmes{M}^{\norm{\I}}\\
&= \sum_{\I \in \pa_p} \int_{\underline{x}_\I \in M^\I} \iota_\I^*\phi(\underline{x}_\I)\,\D_d^\I(\underline{x}_\I) \rmes{M}^{\norm{\I}},
\end{aligned}
\end{multline*}
which concludes the proof.
\end{proof}

Before going further, let us try to give some insight into what is going to happen. Our strategy is to compute the large $d$ asymptotics of the terms of the form
\begin{equation}
\label{eq integral MI}
\int_{\underline{x}_\I \in M^\I} \iota_\I^*\phi(\underline{x}_\I) \D^\I_d(\underline{x}_\I) \rmes{M}^{\norm{\I}}
\end{equation}
appearing in Lemma~\ref{lem integral expression}. In the case where $p$ is even, which is a bit simpler to describe, many of these terms will contribute a leading term of order $d^\frac{p}{4}$, and the others will only contribute a smaller order term. Terms of the form~\eqref{eq integral MI} can all be dealt with in the same way, hence we will not make any formal difference between them in the course of the proof. Note however, that the term indexed by $\I_0(p)$ is simpler to understand, as shown in Example~\ref{ex term I0} below. This term is the one we considered in our sketch of proof in Section~\ref{sec introduction}. At each step in the following, we advise the reader to first understand what happens for this term, before looking at the general case.

\begin{exa}[Term indexed by $\I_0(p)$]
\label{ex term I0}
Recall that $\I_0(p) = \left\{\{i\} \mvert 1 \leq i \leq p \right\}$. Identifying $\I_0(p)$ with $\{1,\dots,p\}$ through the canonical bijection  $i \mapsto \{i\}$, the map $\iota_{\I_0(p)}:M^{\I_0(p)} \to M^p$ is the identity of $M^p$, see Definition~\ref{def diagonal inclusions}. Hence, the term indexed by $\I_0(p)$ in Lemma~\ref{lem integral expression} is:
\begin{equation*}
\int_{(x_1,\dots,x_p) \in M^p} \left(\prod_{i=1}^p\phi_i(x_i)\right) \D_d^{\I_0(p)}(x_1,\dots,x_p) \rmes{M}^p.
\end{equation*}

Then, $\mathcal{S}_{\I_0(p)}$ is just the set of all subsets of $\{1,\dots,p\}$, see Definition~\ref{def subset adapted to I}. Furthermore, for any $A \subset \{1,\dots,p\}$ the induced partition $\I_0(p)_A$ equals $\{\{i\} \mid i \in A\}$, hence is canonically in bijection with~$A$. Finally, we obtain that:
\begin{equation*}
\D_d^{\I_0(p)} : (x_1,\dots,x_p) \longmapsto \sum_{A \subset \{1,\dots,p\}} (-1)^{p-\norm{A}} \mathcal{R}_d^{\norm{A}}(\underline{x}_A) \prod_{i \notin A} \mathcal{R}^1_d(x_i).
\end{equation*}
\end{exa}


\subsection{Cutting \texorpdfstring{$M^A$}{} into pieces}
\label{subsec cutting MA into pieces}

In this section, we define a way to cut the Cartesian product $M^A$ into disjoint pieces, for any finite subset $A$. The upshot is to use this decomposition of $M^A$ with $A = \I \in \pa_p$, in order to obtain the large $d$ asymptotics of integrals of the form~\eqref{eq integral MI}. For this to work, we need to define a splitting of $M^A$ that depends on the parameters $p \in \N$ and $d \geq 1$.

\begin{dfn}[Clustering graph]
\label{def Gdp}
Let $A$ be a finite set and let $p \in \N$. For all $d \geq 1$, for all $x =(x_a)_{a \in A} \in M^A$, we define a graph $G^p_d(x)$ as follows.
\begin{itemize}
\item The vertices of $G^p_d(x)$ are the elements of $A$.
\item Two vertices $a$ and $b \in A$ are joined by an edge of $G^p_d(x)$ if and only if $\rho_g(x_a,x_b) \leq b_p \frac{\ln d}{\sqrt{d}}$ and $a \neq b$, where $\rho_g$ is the geodesic distance in $M$ and $b_p$ is the constant defined by Definition~\ref{def bp}.
\end{itemize}
\end{dfn}

The clustering graph $G^p_d(x)$ contains more information than we need. What we are actually interested in is the partition of $A$ given by the connected components of $G^p_d(x)$. This partition, defined below, encodes how the components of $x$ are clustered in $M$ at scale $b_p\frac{\ln d}{\sqrt{d}}$. An example of this construction is given on Figure~\ref{fig clusters} below, for $A = \{1,\dots,6\}$ and $M$ a circle.

\begin{dfn}[Clustering partition]
\label{def Idp}
Let $A$ be a finite set and let $p \in \N$. For all $d \geq 1$, for all $x \in M^A$, we denote by $\I_d^p(x)$ the partition of $A$ defined by the connected components of~$G_d^p(x)$. That is $a$ and $b \in A$ belong to the same element of $\I_d^p(x)$ if and only if they are in the same connected component of $G_d^p(x)$.
\end{dfn}

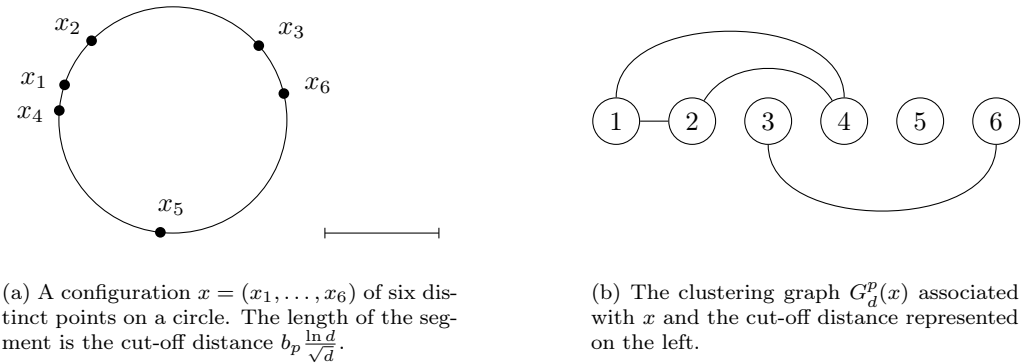
\begin{figure}[ht]
\hspace{0.05\textwidth}
\subfloat[A configuration $x = (x_1,\dots,x_6)$ of six distinct points on a circle. The length of the segment is the cut-off distance $b_p \frac{\ln d}{\sqrt{d}}$. \label{fig config}]{\begin{tikzpicture}[x=0.5cm,y=0.5cm]
\clip(-4.5,-4) rectangle (7.5,4);
\draw(0,0) circle (1.5cm);
\draw (4,-3)-- (7,-3);
\fill [color=black] (-2.85,0.93) circle (2pt);
\draw[color=black] (-3.65,1.06) node {$x_1$};
\fill [color=black] (-2.14,2.1) circle (2pt);
\draw[color=black] (-2.76,2.48) node {$x_2$};
\fill [color=black] (2.26,1.97) circle (2pt);
\draw[color=black] (3.13,2.37) node {$x_3$};
\fill [color=black] (-2.99,0.25) circle (2pt);
\draw[color=black] (-3.75,0.15) node {$x_4$};
\fill [color=black] (-0.33,-2.98) circle (2pt);
\draw[color=black] (-0.04,-2.31) node {$x_5$};
\fill [color=black] (2.92,0.7) circle (2pt);
\draw[color=black] (3.82,0.92) node {$x_6$};
\draw [color=black,shift={(4,-3)}] (0,-2pt)--(0,2pt);
\draw [color=black,shift={(7,-3)}] (0,-2pt)--(0,2pt);
\end{tikzpicture}}
\hfill
\subfloat[The clustering graph $G^p_d(x)$ associated with $x$ and the cut-off distance represented on the left. \label{fig graph}]{\begin{tikzpicture}
\tikz{
	\node (a) [circle,draw] at (0.3,0.2) {$1$};
	\node (b) [circle,draw] at (1.3,0.2) {$2$};
	\node (c) [circle,draw] at (2.3,0.2) {$3$};
	\node (d) [circle,draw] at (3.3,0.2) {$4$};
	\node (e) [circle,draw] at (4.3,0.2) {$5$};
	\node (f) [circle,draw] at (5.3,0.2) {$6$};
	
	\graph{
		(a) -- (b);
		(a) --[bend left=60] (d);
		(b) --[bend left=45] (d);
		(c) --[bend right=60] (f)
	};
}
\end{tikzpicture}}
\hspace{0.05\textwidth}
\caption{Example of a configuration of six points on a curve $M$ isometric to a circle. The clustering partition of $x = (x_1,\dots,x_6)$ is the partition of $\{1,\dots,6\}$ according to the connected components of the graph~$G^p_d(x)$. Here it equals $\I^p_d(x) = \left\{\{1,2,4\},\{3,6\},\{5\}\right\}$. \label{fig clusters}}
\end{figure}

Given $p \in \N$ and $d \geq 1$, we can now divide the points of $M^A$ according to their clustering partition at scale $b_p\frac{\ln d}{\sqrt{d}}$. This yields a splitting of $M^A$ into pieces indexed by $\pa_A$.

\begin{dfn}
\label{def M AIdp}
Let $A$ be a finite set, let $\I \in \pa_A$, let $p \in \N$ and let $d \geq 1$, we define:
\begin{equation*}
M^{A,p}_{\I,d} = \left\{ x \in M^A \mvert \I^p_d(x) = \I \right\}.
\end{equation*}
\end{dfn}

\begin{rems}
\label{rem M AIdp}
\begin{itemize}
\item For any $p \in \N$ and $d \geq 1$, we have: $M^A = \bigsqcup_{\I \in \pa_A} M^{A,p}_{\I,d}$.
\item Let $\I \in \pa_A$ and let $(x_a)_{a \in A} \in M^{A,p}_{\I,d}$. For all $I \in \I$, for any $a$ and $b \in I$, the geodesic distance from $x_a$ to $x_b$ satisfies: $\rho_g(x_a,x_b) \leq (\norm{I}-1) b_p\frac{\ln d}{\sqrt{d}} \leq \norm{A} b_p\frac{\ln d}{\sqrt{d}}$.
\item Recalling Definitions~\ref{def diagonals} and~\ref{def order on partitions}, if $d >1$ the set $M^{A,p}_{\I,d}$ is a neighborhood of the diagonal $\Delta_{A,\I}$ minus some neighborhood of $\bigsqcup_{\J > \I} \Delta_{A,\J}$. If $d=1$, we have $M^{A,p}_{\I,1}=\Delta_{A,\I}$.
\end{itemize}
\end{rems}

In the course of the proof of Theorem~\ref{thm main}, we will make use of the following two auxiliary results concerning the sets $M^{A,p}_{\I,d}$ defined above.

\begin{lem}
\label{lem M AIdp restricted}
Let $A$ be a finite set, let $\I \in \pa_A$, let $p \in \N$ and let $d \geq 1$. Let $B \subset A$ be a union of elements of $\I$, i.e.~for all $I \in \I$ we have either $I \subset B$ or $I \cap B = \emptyset$. Then, for all $\underline{x}_A \in M^{A,p}_{\I,d}$, we have $\underline{x}_B \in M^{B,p}_{\I_B,d}$.
\end{lem}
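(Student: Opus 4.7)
The plan is to reduce the statement to an elementary property of graphs: taking an induced subgraph on a union of connected components. The key observation is that $G^p_d(\underline{x}_A)$, as defined in Definition~\ref{def Gdp}, has its edges determined only by the pairwise geodesic distances of the coordinates it sees. Concretely, because the cut-off $b_p\frac{\ln d}{\sqrt{d}}$ depends only on $p$ and $d$ (and not on $A$), the graph $G^p_d(\underline{x}_B)$ coincides \emph{exactly} with the induced subgraph of $G^p_d(\underline{x}_A)$ on the vertex subset $B$. This compatibility is the only non-combinatorial input; once it is noted, everything else is pure graph theory.

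Then I would argue as follows. By the assumption $\underline{x}_A \in M^{A,p}_{\I,d}$ and Definition~\ref{def Idp}, the connected components of $G^p_d(\underline{x}_A)$ are precisely the elements of $\I$. The hypothesis on $B$ says that $B$ is a union of some of these components. A standard fact about graphs states that if one restricts a graph $G$ to an induced subgraph on a vertex set $B$ which is a union of connected components of $G$, then the connected components of the restriction are exactly those components of $G$ contained in $B$. Applied here, the connected components of $G^p_d(\underline{x}_B)$ are exactly $\{I \in \I \mid I \subset B\}$, which by the discussion following Definition~\ref{def induced partition} equals $\I_B$.

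Therefore $\I^p_d(\underline{x}_B) = \I_B$, and by Definition~\ref{def M AIdp} this means $\underline{x}_B \in M^{B,p}_{\I_B,d}$, as required. The only point requiring attention is that both directions of the graph-theoretic fact above are used: paths in $G^p_d(\underline{x}_A)$ within a single component $I\subset B$ stay within $I\subset B$, so $I$ remains connected in $G^p_d(\underline{x}_B)$; conversely, any two distinct components $I,I'\subset B$ of $G^p_d(\underline{x}_A)$ are \emph{a fortiori} not linked by an edge of $G^p_d(\underline{x}_B)$, hence lie in distinct components of the restriction. There is no real obstacle: the statement is essentially a consistency check of Definitions~\ref{def Gdp}, \ref{def Idp} and~\ref{def M AIdp} with the notion of induced partition from Definition~\ref{def induced partition}.
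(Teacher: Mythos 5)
Your proposal is correct and takes essentially the same route as the paper: both identify $G^p_d(\underline{x}_B)$ as the induced subgraph of $G^p_d(\underline{x}_A)$ on the vertex set $B$, and both invoke the fact that restricting a graph to a union of its connected components preserves exactly those components. The paper spells out the two directions slightly more verbosely, but the argument is the same.
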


\begin{proof}
Let $\underline{x} = (x_a)_{a \in A} \in M^{A,p}_{\I,d}$, recall that $\underline{x}_B = (x_a)_{a \in B} \in M^B$. By Definition~\ref{def M AIdp}, we want to prove that $\I^p_d(\underline{x}_B) = \I_B$, where $\I_B = \{I \in \I \mid I \subset B\}$ is the partition of $B$ induced by $\I$, see Definition~\ref{def induced partition}.

Recalling Definitions~\ref{def Gdp} and~\ref{def Idp}, the partition $\I^p_d(\underline{x}_B)$ is defined by the connected components of $G^p_d(\underline{x}_B)$. The graph $G^p_d(\underline{x}_B)$ is a subgraph of $G^p_d(\underline{x}_A)$, obtained by erasing the vertices indexed by $a \in A \setminus B$ and the edges having at least one endpoint in $A \setminus B$. If $B$ was any subset of $A$, two vertices $a,b \in B$ in the same connected component of $G^p_d(\underline{x}_A)$ could end up in different connected components of $G^p_d(\underline{x}_B)$. In terms of clustering partitions, if $I \in \I_d^p(\underline{x}_A)$, then $I \cap B$ might be a disjoint union of several elements of $\I_d^p(\underline{x}_B)$, neither empty nor equal to $I$. Our hypothesis that $B$ is a union of elements of $\I$ ensures that this does not happen.

To conclude the proof it is enough to show that, for all $I \in \I$ such that $I \subset B$, we have $I \in \I_d^p(\underline{x}_B)$. Let $I \in \I$ such that $I \subset B$. Since $I \in \I = \I_d^p(\underline{x}_A)$, the elements of $I$ are the vertices of one of the connected components of $G_d^p(\underline{x}_A)$, which is equal to $G_d^p(\underline{x}_I)$. This connected component is not modified when we erase the vertices in $A \setminus B$ and the corresponding edges from $G_d^p(\underline{x}_A)$. Thus, $G_d^p(\underline{x}_I)$ is still a connected component in $G_d^p(\underline{x}_B)$, and $I \in \I_d^p(\underline{x}_B)$.
\end{proof}

\begin{lem}
\label{lem M AIdp blurred}
Let $A$ be a finite set, let $\I \in \pa_A$, let $p \in \N$ and let $d \geq 1$, we have the following inclusion between subsets of $M^A$:
\begin{equation*}
M^{A,p}_{\I,d} \subset \prod_{I \in \I} M^{I,p}_{\{I\},d} \subset \bigsqcup_{\J \geq \I} M^{A,p}_{\J,d}.
\end{equation*}
\end{lem}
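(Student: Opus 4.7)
Both inclusions follow directly from the definitions once we observe that the clustering graph behaves well with respect to taking subsets of coordinates. I would treat the two inclusions separately.

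For the first inclusion, let $\underline{x}_A \in M^{A,p}_{\I,d}$. For each $I \in \I$, the set $I$ is trivially a union of elements of $\I$ (namely $\{I\}$), so Lemma~\ref{lem M AIdp restricted} applies with $B = I$ and yields $\underline{x}_I \in M^{I,p}_{\I_I,d}$. But the partition of $I$ induced by $\I$ is $\I_I = \{I\}$, so $\underline{x}_I \in M^{I,p}_{\{I\},d}$. Taking the product over $I \in \I$ gives $\underline{x}_A \in \prod_{I \in \I} M^{I,p}_{\{I\},d}$.

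For the second inclusion, let $\underline{x}_A \in \prod_{I \in \I} M^{I,p}_{\{I\},d}$ and set $\J = \I^p_d(\underline{x}_A) \in \pa_A$, so that by definition $\underline{x}_A \in M^{A,p}_{\J,d}$. It remains to show $\J \geq \I$, i.e.\ $\I \leq \J$ in the sense of Definition~\ref{def order on partitions}. Fix $I \in \I$. Since $\underline{x}_I \in M^{I,p}_{\{I\},d}$, the clustering partition of $\underline{x}_I$ is the single block $\{I\}$, which means the graph $G^p_d(\underline{x}_I)$ is connected. Now, directly from Definition~\ref{def Gdp}, $G^p_d(\underline{x}_I)$ is the subgraph of $G^p_d(\underline{x}_A)$ induced on the vertex set $I$; hence all vertices in $I$ lie in a single connected component of $G^p_d(\underline{x}_A)$, which by Definition~\ref{def Idp} means that $I$ is contained in some element $J \in \J$. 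This holds for every $I \in \I$, so $\I \leq \J$, giving $\underline{x}_A \in M^{A,p}_{\J,d} \subset \bigsqcup_{\J \geq \I} M^{A,p}_{\J,d}$.

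There is no real obstacle here; the argument is purely combinatorial and set-theoretic, and the only point that deserves care is the observation that restricting the coordinates from $A$ to a union of blocks of $\I$ (in the first inclusion) or to a single block of $\I$ (in the second inclusion) corresponds exactly to passing to an induced subgraph of the clustering graph, which preserves connectedness within each $I$ but may merge clusters across different $I$'s, producing a coarser partition $\J \geq \I$.
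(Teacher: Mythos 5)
Your proof is correct and follows essentially the same approach as the paper: the first inclusion via Lemma~\ref{lem M AIdp restricted} applied to each block $I \in \I$, and the second by observing that each block of $\I$ sits inside a single connected component of the full clustering graph, so the clustering partition of $\underline{x}_A$ can only be coarser than $\I$. Your version of the second inclusion is slightly more explicit than the paper's (making the induced-subgraph observation precise rather than arguing informally), but the underlying idea is identical.
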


\begin{proof}
Let $\underline{x}_A= (x_a)_{a \in A} \in M^{A,p}_{\I,d}$. We know that the components of $\underline{x}_A$ are clustered in~$M$ according to $\I$. Informally, if we remember that the points indexed by each $I \in \I$ are close together, but we forget about the distances between points in different clusters, then all we can say is that $\underline{x}_A \in \prod_{I \in \I} M^{I,p}_{\{I\},d}$. More formally, let $I \in \I$, then $I$ is a union of elements of $\I$, and $\I_I = \{I\}$. By Lemma~\ref{lem M AIdp restricted}, we have $\underline{x}_I \in M^{I,p}_{\{I\},d}$ for any $I \in \I$. Hence $\underline{x}_A =(\underline{x}_I)_{I \in \I} \in \prod_{I \in \I} M^{I,p}_{\{I\},d}$. This proves the first inclusion in Lemma~\ref{lem M AIdp blurred}.

Let us now prove the second inclusion. Let $\underline{x}_A \in \prod_{I \in \I} M^{I,p}_{\{I\},d} \subset M^A$, we want to recover its clustering partition $\I_d^p(\underline{x}_A)$. We know that the components of $\underline{x}_A$ indexed by a given $I \in \I$ are close together in $M$, hence belong to the same cluster. However, given $I$ and $J \in \I$, the points indexed by $I$ and those indexed by $J$ might be close enough in $M$ that they belong to the same cluster. Thus, all we can say is that the clusters of $\underline{x}_A$ are indexed by unions of elements of $\I$, that is $\I_d^p(\underline{x}_A) \geq \I$. This shows that $\underline{x}_A \in \bigsqcup_{\J \geq \I} M^{A,p}_{\J,d}$ and concludes the proof.
\end{proof}

We conclude this section by bounding from above the volume of $M^{A,p}_{\I,d}$. What we are interested in here is an asymptotic upper bound as $d \to +\infty$ with $A$, $\I$ and $p$ fixed.

\begin{lem}[Volume bound]
\label{lem volume M AIdp}
Let $A$ be a finite set, let $\I \in \pa_A$ and let $p \in \N$. As $d \to +\infty$, we have:
\begin{equation*}
\vol{M^{A,p}_{\I,d}} = O\!\left(\left(\frac{\ln d}{\sqrt{d}}\right)^{\norm{A}-\norm{\I}}\right).
\end{equation*}
\end{lem}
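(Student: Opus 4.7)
The plan is to reduce the bound on $\vol{M^{A,p}_{\I,d}}$ to a bound on the volume of each factor $M^{I,p}_{\{I\},d}$ for $I \in \I$, using the inclusion provided by Lemma~\ref{lem M AIdp blurred}. Since
\begin{equation*}
M^{A,p}_{\I,d} \subset \prod_{I \in \I} M^{I,p}_{\{I\},d},
\end{equation*}
Fubini gives $\vol{M^{A,p}_{\I,d}} \leq \prod_{I \in \I} \vol{M^{I,p}_{\{I\},d}}$, so it suffices to show that $\vol{M^{I,p}_{\{I\},d}} = O\!\left(\left(\frac{\ln d}{\sqrt{d}}\right)^{\norm{I}-1}\right)$ as $d \to +\infty$ for each $I \in \I$.

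To estimate $\vol{M^{I,p}_{\{I\},d}}$, I would fix some distinguished index $a_0 \in I$ and integrate first over $x_{a_0} \in M$, then over the remaining coordinates. The key observation, already made in the second bullet of Remark~\ref{rem M AIdp}, is that if $\underline{x}_I \in M^{I,p}_{\{I\},d}$, then the graph $G^p_d(\underline{x}_I)$ is connected, hence any two coordinates $x_a, x_b$ satisfy $\rho_g(x_a, x_b) \leq (\norm{I}-1)\, b_p\, \frac{\ln d}{\sqrt{d}}$. In particular, for each $a \in I \setminus \{a_0\}$, the point $x_a$ lies in the geodesic ball of radius $(\norm{I}-1)\,b_p\,\frac{\ln d}{\sqrt{d}}$ centered at $x_{a_0}$. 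Since $M$ is a smooth compact Riemannian $1$-manifold, any such ball has arc-length measure at most $2(\norm{I}-1)\,b_p\,\frac{\ln d}{\sqrt{d}}$ for $d$ large. Integrating $x_{a_0}$ against the finite total measure $\vol{M}$ and each remaining coordinate against this local bound gives
\begin{equation*}
\vol{M^{I,p}_{\{I\},d}} \leq \vol{M} \left(2(\norm{I}-1)\,b_p\,\frac{\ln d}{\sqrt{d}}\right)^{\norm{I}-1} = O\!\left(\left(\frac{\ln d}{\sqrt{d}}\right)^{\norm{I}-1}\right).
\end{equation*}

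Taking the product over $I \in \I$ and using that $\sum_{I \in \I}(\norm{I}-1) = \norm{A} - \norm{\I}$ yields the desired estimate. The argument is essentially a volume count, and I do not expect any serious obstacle: the only mild subtlety is justifying the ball-volume bound uniformly in $d$, which is immediate from compactness of $M$ and the fact that $\frac{\ln d}{\sqrt{d}} \to 0$, so for $d$ large the balls in question are contained in charts on which the Riemannian volume is comparable to Lebesgue measure.
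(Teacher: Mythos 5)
Your proof is correct and follows essentially the same approach as the paper: fix a distinguished coordinate in each block of $\I$, bound the remaining coordinates by geodesic balls of radius $O\!\left(\frac{\ln d}{\sqrt{d}}\right)$, and conclude with $\sum_{I \in \I}(\norm{I}-1) = \norm{A}-\norm{\I}$. The only cosmetic difference is that you route through Lemma~\ref{lem M AIdp blurred} to factor $M^{A,p}_{\I,d}$ into a product before bounding each factor, whereas the paper performs the same ball-volume count directly on $M^{A,p}_{\I,d}$.
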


\begin{proof}
For all $I \in \I$, we fix a preferred element $a_I$ of $I$. Let $x = (x_a)_{a \in A} \in M^{A,p}_{\I,d}$. For all $I \in \I$, for all $a \in I \setminus \{a_I\}$, the point $x_a$ belongs to the geodesic ball of center $x_{a_I}$ and radius $\norm{A} b_p \frac{\ln d}{\sqrt{d}}$. Since $M$ is $1$-dimensional, this ball has volume $2\norm{A} b_p \frac{\ln d}{\sqrt{d}}$. Hence,
\begin{equation*}
\vol{M^{A,p}_{\I,d}} \leq \vol{M}^{\norm{\I}} \prod_{I \in \I} \left(2\norm{A}b_p \frac{\ln d}{\sqrt{d}}\right)^{\norm{I}-1} = O\!\left(\prod_{I \in \I}\left(\frac{\ln d}{\sqrt{d}}\right)^{\norm{I}-1}\right).
\end{equation*}
The result follows from $\sum_{I \in \I} (\norm{I}-1) = \norm{A}-\norm{\I}$.
\end{proof}


\subsection{An upper bound on the contribution of each piece}
\label{subsec an upper bound on the contribution of each piece}

Let $p \geq 2$, given $\I \in \pa_p$ and test-functions $(\phi_i)_{1 \leq i \leq p}$, we want to estimate the integral~\eqref{eq integral MI} as $d \to +\infty$. This is done by splitting $M^\I$ as in Section~\ref{subsec cutting MA into pieces} and estimating the contribution of each piece. One difficulty is that $\I \in \pa_p$, and the pieces of our decomposition of $M^\I$ are indexed by partitions of $\I$, seen as a finite set. Hence we need to consider partitions of partitions.

Let $\J \in \pa_\I$, we are interested in the large $d$ asymptotics of:
\begin{equation}
\label{eq integral MIpJd}
\int_{\underline{x}_\I \in M^{\I,p}_{\J,d}} \iota_\I^*\phi(\underline{x}_\I)\, \D^\I_d(\underline{x}_\I) \rmes{M}^{\norm{\I}}.
\end{equation}
Note that the integral~\eqref{eq integral MI} is the sum over $\J \in \pa_\I$ of the terms of the form~\eqref{eq integral MIpJd}. Hence, the central moment $m_p(\nu_d)(\phi_1,\dots,\phi_p)$ we are interested in is the double sum of these terms over $\I \in \pa_p$ and $\J \in \pa_\I$, see Lemma~\ref{lem integral expression}. As far as the order of magnitude of the integral~\eqref{eq integral MIpJd} is concerned, we can think of the test-function $\iota_\I^*\phi$ as being constant. This order of magnitude will then result from the competition between two things: the order of the typical values of $\D_d^\I$ and the volume of the set $M^{\I,p}_{\J,d}$.

In this section, we compute an asymptotic upper bound as $d \to +\infty$ for the contribution of the integral~\eqref{eq integral MIpJd}. This upper bound shows that, if $\norm{\J} < \frac{p}{2}$, then the integral~\eqref{eq integral MIpJd} only contributes an error term in the estimates of Theorem~\ref{thm main}. This is not surprising since in this case the volume of $M^{\I,p}_{\J,d}$ is comparatively small, see Lemma~\ref{lem volume M AIdp}.

\begin{lem}
\label{lem upper bound}
Let $p \geq 2$ and let $\phi_1,\dots,\phi_p \in \mathcal{C}^0(M)$. Let $\I \in \pa_p$, for all $\J \in \pa_\I$, we have:
\begin{equation*}
\int_{\underline{x}_\I \in M^{\I,p}_{\J,d}} \iota_\I^*\phi(\underline{x}_\I)\, \D^\I_d(\underline{x}_\I) \rmes{M}^{\norm{\I}} = \left(\prod_{i=1}^p \Norm{\phi_i}_\infty\right) O\!\left(d^{\frac{\norm{\J}}{2}} (\ln d)^{\norm{\I}-\norm{\J}}\right),
\end{equation*}
where the error term does not depend on $(\phi_1,\dots,\phi_p)$.
\end{lem}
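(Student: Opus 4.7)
The plan is to bound the integrand pointwise by $\left(\prod_{i=1}^p \Norm{\phi_i}_\infty\right) \cdot O(d^{\norm{\I}/2})$ and then multiply by the volume of the domain, which Lemma~\ref{lem volume M AIdp} controls. First, $\iota_\I^*\phi(\underline{x}_\I) = \prod_{i=1}^p \phi_i(\iota_\I(\underline{x}_\I)_i)$ is bounded by $\prod_{i=1}^p \Norm{\phi_i}_\infty$ uniformly, and this factor pulls out of the integral.

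The real content is the uniform bound $|\D_d^\I(\underline{x}_\I)| = O(d^{\norm{\I}/2})$ on $M^\I$. Recall from Lemma~\ref{lem integral expression} that
\begin{equation*}
\D_d^\I(\underline{x}_\I) = \sum_{A \in \mathcal{S}_\I} (-1)^{p-\norm{A}} \mathcal{R}_d^{\norm{\I_A}}(\underline{x}_{\I_A}) \prod_{i \notin A} \mathcal{R}_d^1(x_{\{i\}}).
\end{equation*}
By Proposition~\ref{prop bounded density}, each summand is $O(d^{(\norm{\I_A}+p-\norm{A})/2})$. A short combinatorial check shows that the exponent $\norm{\I_A}+p-\norm{A}$ equals $\norm{\I}$ for every $A \in \mathcal{S}_\I$. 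Indeed, split $\I = \I_{\geq 2} \sqcup \I_1$ into its multi-element blocks and its singletons; since $A$ is adapted to $\I$, it contains $\bigcup \I_{\geq 2}$ entirely and may contain some singletons, so $\I_A = \I_{\geq 2} \sqcup \{\{i\} : i \in A \cap \bigcup \I_1\}$. Hence $\norm{\I_A}-\norm{A} = \norm{\I_{\geq 2}} - \sum_{I \in \I_{\geq 2}} \norm{I}$, which is independent of $A$; plugging in $A = \{1,\dots,p\}$ gives $\norm{\I}-p$. Summing over the (finitely many) $A \in \mathcal{S}_\I$ therefore yields $|\D_d^\I| = O(d^{\norm{\I}/2})$ uniformly. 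Since $\mathcal{R}_d^k$ extends continuously to all of $M^k$ and vanishes on the diagonal, the bound is valid on the whole of $M^\I$.

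Finally, Lemma~\ref{lem volume M AIdp}, applied to the finite set $\I$ (of cardinality $\norm{\I}$) and the partition $\J \in \pa_\I$ (of cardinality $\norm{\J}$), gives $\vol{M^{\I,p}_{\J,d}} = O((\ln d/\sqrt{d})^{\norm{\I}-\norm{\J}})$. Multiplying the three bounds yields
\begin{equation*}
\int_{M^{\I,p}_{\J,d}} \iota_\I^*\phi \cdot \D_d^\I \rmes{M}^{\norm{\I}} = \left(\prod_{i=1}^p \Norm{\phi_i}_\infty\right) O\!\left(d^{\norm{\I}/2} \left(\tfrac{\ln d}{\sqrt{d}}\right)^{\norm{\I}-\norm{\J}}\right) = \left(\prod_{i=1}^p \Norm{\phi_i}_\infty\right) O\!\left(d^{\norm{\J}/2}(\ln d)^{\norm{\I}-\norm{\J}}\right),
\end{equation*}
which is the claim. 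The constants hidden in the $O(\cdot)$ depend only on $p$ (through $b_p$ and the constants $C_k$ for $k \leq p$) and not on $(\phi_1,\dots,\phi_p)$.

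There is no real obstacle here: this is purely a worst-case estimate that matches the trivial scale $d^{\norm{\I}/2}$ of $\D_d^\I$ against the volume $O((\ln d/\sqrt{d})^{\norm{\I}-\norm{\J}})$ of the clustering stratum. The sharper multiplicative estimate for the density (Proposition~\ref{prop multiplicativity Rk}), which exploits the exponential decay of the Bergman kernel away from the diagonal, will only be needed later to extract the leading-order contributions indexed by $\J \in \pp_p$ with $\norm{\J} = p/2$; for the crude upper bound of Lemma~\ref{lem upper bound} the pointwise bound from Proposition~\ref{prop bounded density} suffices.
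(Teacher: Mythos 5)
Your proof is correct and follows essentially the same route as the paper: bound $\iota_\I^*\phi$ by $\prod_i \Norm{\phi_i}_\infty$, bound $\D_d^\I$ uniformly by $O(d^{\norm{\I}/2})$ via Proposition~\ref{prop bounded density}, and multiply by the volume estimate of Lemma~\ref{lem volume M AIdp}. Your explicit combinatorial check that $\norm{\I_A}+p-\norm{A}=\norm{\I}$ for every $A \in \mathcal{S}_\I$ (which is immediate from $\I = \I_A \sqcup \{\{i\} \mid i \notin A\}$) spells out a step the paper leaves implicit, but the argument is identical.
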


\begin{proof}
We have $\Norm{\iota_\I^*\phi}_\infty \leq \Norm{\phi}_\infty \leq \prod_{i=1}^p \Norm{\phi_i}_\infty$. Besides, by the definition of $\D_d^\I$ (see Lemma~\ref{lem integral expression}) and Proposition~\ref{prop bounded density}, the density $\D_d^\I$ is bounded uniformly on $M^\I$ by $O(d^\frac{\norm{\I}}{2})$. Integrating this estimate over $M^{\I,p}_{\J,d}$ yields:
\begin{equation*}
\norm{\int_{\underline{x}_\I \in M^{\I,p}_{\J,d}} \iota_\I^*\phi(\underline{x}_\I) \, \D^\I_d(\underline{x}_\I) \rmes{M}^{\norm{\I}}} \leq \left(\prod_{i=1}^p \Norm{\phi_i}_\infty\right) O(d^{\frac{\norm{\I}}{2}}) \vol{M^{\I,p}_{\J,d}}.
\end{equation*}
Then the result follows from the volume estimate of Lemma~\ref{lem volume M AIdp}.
\end{proof}


\subsection{Contribution of the partitions with an isolated point}
\label{subsec contribution of the partitions with an isolated point}

In this section, we still work with a fixed $p \geq 2$ and some fixed $\I \in \pa_p$. Lemma~\ref{lem upper bound} seems to say that the main contribution to the integral~\eqref{eq integral MI} should be given by terms of the form~\eqref{eq integral MIpJd} where $\norm{\J}$ is large, i.e.~$\J$ defines many small clusters. However, by looking more carefully at $\D_d^\I$, we can prove that the contribution of $M^{\I,p}_{\J,d}$ is also small if $\J$ contains an isolated point. In particular, for such a $\J$, the integral~\eqref{eq integral MIpJd} will only contribute an error term in the estimates of Theorem~\ref{thm main}. The main result of this section is Corollary~\ref{cor partitions with a singleton} below.

\begin{lem}
\label{lem partitions with a singleton}
Let $p \geq 2$ and let $\I \in \pa_p$. We assume that there exists $j \in \{1,\dots,p\}$ such that $\{j\} \in \I$. Then, for all $\J \in \pa_\I$ such that $\left\{\{j\}\right\} \in \J$, we have:
\begin{equation*}
\forall \underline{x}_\I \in M^{\I,p}_{\J,d}, \qquad \D^\I_d(\underline{x}_\I) = O(d^{\frac{p}{4}-1}),
\end{equation*}
as $d \to +\infty$, uniformly in $\underline{x}_\I \in M^{\I,p}_{\J,d}$.
\end{lem}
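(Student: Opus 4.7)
The plan is to exploit a sign-cancellation in the defining sum of $\D_d^\I$ coming from Lemma~\ref{lem integral expression}, combined with the quasi-independence estimate of Proposition~\ref{prop multiplicativity Rk}. Recall that
\begin{equation*}
\D^\I_d(\underline{x}_\I) = \sum_{A \in \mathcal{S}_\I} (-1)^{p - \norm{A}} \mathcal{R}_d^{\norm{\I_A}}(\underline{x}_{\I_A}) \prod_{i \notin A} \mathcal{R}_d^1(x_{\{i\}}).
\end{equation*}
Because $\{j\} \in \I$ is a singleton of $\I$, both $A$ and $A \cup \{j\}$ satisfy the condition of Definition~\ref{def subset adapted to I}; hence the involution $A \mapsto A \triangle \{j\}$ acts on $\mathcal{S}_\I$ without fixed points. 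I would use this involution to pair the $2^{p-1}$ (or fewer) terms of the sum two by two, each pair consisting of some $A$ with $j \notin A$ and its companion $A' = A \cup \{j\}$.

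For such a pair, a direct computation gives
\begin{equation*}
\mathrm{Term}(A) + \mathrm{Term}(A') = (-1)^{p-\norm{A}} \left(\prod_{i \notin A'} \mathcal{R}_d^1(x_{\{i\}})\right)\left[\mathcal{R}_d^{\norm{\I_A}}(\underline{x}_{\I_A})\mathcal{R}_d^1(x_{\{j\}}) - \mathcal{R}_d^{\norm{\I_A}+1}(\underline{x}_{\I_A},x_{\{j\}})\right],
\end{equation*}
because $\I_{A'} = \I_A \sqcup \{\{j\}\}$. The hypothesis $\{\{j\}\} \in \J$ precisely means, via Definition~\ref{def Idp}, that $x_{\{j\}}$ is at geodesic distance at least $b_p \frac{\ln d}{\sqrt{d}}$ from every other $x_I$ with $I \in \I \setminus \{\{j\}\}$. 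This is exactly the separation hypothesis of Proposition~\ref{prop multiplicativity Rk} applied to $k = \norm{\I_A}+1 \leq \norm{\I} \leq p$, with the two blocks being $\I_A$ and $\{\{j\}\}$. That proposition therefore yields
\begin{equation*}
\mathcal{R}_d^{\norm{\I_A}+1}(\underline{x}_{\I_A},x_{\{j\}}) = \mathcal{R}_d^{\norm{\I_A}}(\underline{x}_{\I_A})\mathcal{R}_d^1(x_{\{j\}}) + O\!\left(d^{\frac{\norm{\I_A}+1}{2} - \frac{p}{4} - 1}\right),
\end{equation*}
so the bracket above is $O(d^{\frac{\norm{\I_A}+1}{2} - \frac{p}{4} - 1})$.

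To estimate the prefactor, I would use Proposition~\ref{prop bounded density} to bound $\mathcal{R}_d^1(x_{\{i\}}) = O(d^{\frac{1}{2}})$ for each $i \notin A'$, giving $\prod_{i \notin A'} \mathcal{R}_d^1(x_{\{i\}}) = O(d^{\frac{p - \norm{A} - 1}{2}})$. Multiplying the two bounds, the exponent of $d$ for each paired contribution is
\begin{equation*}
\frac{p - \norm{A} - 1}{2} + \frac{\norm{\I_A}+1}{2} - \frac{p}{4} - 1 = \frac{p - (\norm{A}-\norm{\I_A})}{2} - \frac{p}{4} - 1 \leq \frac{p}{4} - 1,
\end{equation*}
since $\norm{\I_A} \leq \norm{A}$. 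Summing over the finitely many (at most $2^{p-1}$, a constant in $d$) pairs yields the claimed bound $\D_d^\I(\underline{x}_\I) = O(d^{\frac{p}{4}-1})$, uniformly in $\underline{x}_\I \in M^{\I,p}_{\J,d}$. The only delicate point is to check that the bound in Proposition~\ref{prop multiplicativity Rk} is applicable for \emph{every} $A \in \mathcal{S}_\I$ with $j \notin A$; this is automatic because $\{\{j\}\}$ being a singleton of $\J$ separates $x_{\{j\}}$ from all other $x_I$ simultaneously, regardless of $A$.
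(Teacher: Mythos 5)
Your proof is correct and follows essentially the same route as the paper: pair $A$ with $A \sqcup \{j\}$ (the paper phrases this via the bijection between $\{A \in \mathcal{S}_\I \mid j\notin A\}$ and its complement, you phrase it via the involution $A \mapsto A \triangle \{j\}$, which is the same pairing), observe that the bracketed difference is controlled by Proposition~\ref{prop multiplicativity Rk} because $\{\{j\}\}\in\J$ places $x_{\{j\}}$ at distance $\geq b_p \frac{\ln d}{\sqrt d}$ from every other coordinate, bound the prefactor by Proposition~\ref{prop bounded density}, and conclude with $\norm{\I_A} \leq \norm{A}$. The only cosmetic slip is the count ``$2^{p-1}$ (or fewer) terms'' — $\mathcal{S}_\I$ can have up to $2^p$ elements, hence up to $2^{p-1}$ \emph{pairs} — but this is immaterial since all that matters is that the number of pairs is bounded independently of $d$.
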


\begin{rem}
\label{rem partitions with a singletons}
This statement and its proof are typically easier to understand when $\I =\I_0(p)$. In this case, we can identify $\I$ with $\{1,\dots,p\}$ canonically as in Example~\ref{ex term I0}. Then $\J \in \pa_p$, and our hypothesis simply means that $\J$ contains a singleton $\{j\}$. The statement of the lemma becomes that: if $x=(x_i)_{1 \leq i \leq p} \in M^p$ is such that $x_j$ is far from the other components of $x$, meaning at distance greater than $b_p \frac{\ln d}{\sqrt{d}}$, then $\D_d^{\I_0(p)}(x) = O(d^{\frac{p}{4}-1})$.
\end{rem}

\begin{proof}[Proof of Lemma~\ref{lem partitions with a singleton}]
Recall that we defined $\mathcal{S}_\I$ in Definition~\ref{def subset adapted to I}. Since $\{j\} \in \I$, we can split $\mathcal{S}_\I$ into the subsets $A \in \mathcal{S}_\I$ that contain $j$, and those that do not. Moreover, $A \mapsto A \sqcup \{j\}$ is a bijection from $\{ A \in \mathcal{S}_\I \mid j \notin A\}$ to $\{ A \in \mathcal{S}_\I \mid j \in A\}$.

Let $\underline{x}_\I =(x_I)_{I \in \I} \in M^{\I,p}_{\J,d}$ and let $A \in \mathcal{S}_\I$ be such that $j \notin A$. We regroup the terms corresponding to $A$ and $A \sqcup\{j\}$ in the sum defining $\D_d^\I(\underline{x}_\I)$, see Lemma~\ref{lem integral expression}. Since we have $\I_{A \sqcup \{j\}} = \I_A \sqcup \{\{j\}\}$, we obtain:
\begin{multline}
\label{eq difference two terms}
(-1)^{p - \norm{A}} \mathcal{R}_d^{\norm{\I_A}}(\underline{x}_{\I_A}) \prod_{i \notin A} \mathcal{R}_d^1(x_{\{i\}}) + (-1)^{p - \norm{A}-1} \mathcal{R}_d^{\norm{\I_A}+1}(\underline{x}_{\I_A},x_{\{j\}}) \prod_{i \notin A \sqcup \{j\}} \mathcal{R}_d^1(x_{\{i\}})\\
= (-1)^{p - \norm{A}-1} \left(\mathcal{R}_d^{\norm{\I_A}+1}(\underline{x}_{\I_A},x_{\{j\}})-\mathcal{R}_d^{\norm{\I_A}}(\underline{x}_{\I_A})\mathcal{R}_d^1(x_{\{j\}})\right)\prod_{i \notin A \sqcup \{j\}} \mathcal{R}_d^1(x_{\{i\}}),
\end{multline}
where we used the fact that $\mathcal{R}_d^{\norm{\I_A}+1}$ is a symmetric function of its arguments, cf.~Definition~\ref{def Rkd}. Recall that $\mathcal{R}^0_d=1$ for all $d \geq 1$ by convention.

Since $\{\{j\}\} \in \J$, the point $x_{\{j\}}$ is far from the other coordinates of $\underline{x}_\I$ by definition of $M^{\I,p}_{\J,d}$, see Definition~\ref{def M AIdp}. More precisely, for all $I \in \I \setminus \{\{j\}\}$ we have $\rho_g(x_I,x_{\{j\}}) \geq b_p \frac{\ln d}{\sqrt{d}}$. Then, by Proposition~\ref{prop multiplicativity Rk}, the term on the right-hand side of Equation~\eqref{eq difference two terms} is $O(d^{\frac{1}{2}(\norm{\I_A}-\norm{A})+ \frac{p}{4}-1})$, uniformly in $\underline{x}_\I \in M^{\I,p}_{\J,d}$. Since $\I_A \in \pa_A$, we have $\norm{\I_A} \leq \norm{A}$. Hence, the previous term is $O(d^{\frac{p}{4}-1})$. We conclude the proof by summing this estimates over $\{ A \in \mathcal{S}_\I \mid j \notin A\}$.
\end{proof}

\begin{cor}
\label{cor partitions with a singleton}
Let $p \geq 2$ and $\I \in \pa_p$. We assume that there exists $j \in \{1,\dots,p\}$ such that $\{j\} \in \I$. Then, for all $\J \in \pa_\I$ such that $\left\{\{j\}\right\} \in \J$, we have:
\begin{equation*}
\forall \phi_1,\dots,\phi_p \in \mathcal{C}^0(M), \qquad \int_{\underline{x}_\I \in M^{\I,p}_{\J,d}} \iota_\I^*\phi(\underline{x}_\I)\, \D^\I_d(\underline{x}_\I) \rmes{M}^{\norm{\I}} = \left(\prod_{i=1}^p \Norm{\phi_i}_\infty\right) O(d^{\frac{p}{4}-1}),
\end{equation*}
where the error term does not depend on $(\phi_1,\dots,\phi_p)$.
\end{cor}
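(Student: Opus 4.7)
The plan is that Corollary~\ref{cor partitions with a singleton} follows directly from combining the pointwise density estimate of Lemma~\ref{lem partitions with a singleton} with the volume estimate of Lemma~\ref{lem volume M AIdp}. There is no new analytic input required; all the work has already been done in these two ingredients.

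First, I would pull the test-functions out of the integral via the uniform bound $|\iota_\I^*\phi(\underline{x}_\I)| \leq \prod_{i=1}^p \Norm{\phi_i}_\infty$, which holds pointwise on $M^\I$ and is clearly independent of $(\phi_1,\dots,\phi_p)$. Second, I would apply Lemma~\ref{lem partitions with a singleton} (whose hypotheses $\{j\}\in\I$ and $\{\{j\}\}\in\J$ are exactly the assumptions of the corollary) to obtain the uniform bound $|\D^\I_d(\underline{x}_\I)| = O(d^{p/4-1})$ on the domain of integration $M^{\I,p}_{\J,d}$. Third, since $\J \in \pa_\I$, I would invoke Lemma~\ref{lem volume M AIdp} applied to the finite set $A=\I$ and the partition $\J$, to get $\vol{M^{\I,p}_{\J,d}} = O\bigl((\ln d/\sqrt{d})^{\norm{\I}-\norm{\J}}\bigr)$.

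Multiplying these three estimates yields
\begin{equation*}
\left|\int_{\underline{x}_\I \in M^{\I,p}_{\J,d}} \iota_\I^*\phi(\underline{x}_\I)\, \D^\I_d(\underline{x}_\I) \rmes{M}^{\norm{\I}}\right| = \left(\prod_{i=1}^p \Norm{\phi_i}_\infty\right) O\!\left(d^{\frac{p}{4}-1} \left(\frac{\ln d}{\sqrt{d}}\right)^{\norm{\I}-\norm{\J}}\right).
\end{equation*}
Since $\J \in \pa_\I$, we have $\norm{\J} \leq \norm{\I}$, so the exponent $\norm{\I}-\norm{\J}$ is a non-negative integer. If $\norm{\I}=\norm{\J}$ the extra factor is $1$; otherwise $d^{-(\norm{\I}-\norm{\J})/2}(\ln d)^{\norm{\I}-\norm{\J}} = o(1)$ as $d \to +\infty$. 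In both cases the extra factor is $O(1)$, and the bound collapses to $\left(\prod_{i=1}^p \Norm{\phi_i}_\infty\right) O(d^{p/4-1})$, as claimed.

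There is no real obstacle here beyond careful bookkeeping: one must check that the partition-of-a-partition indexing is consistent between Lemma~\ref{lem partitions with a singleton} and Lemma~\ref{lem volume M AIdp}, and verify that the error terms in both lemmas are uniform in $\underline{x}_\I$ and independent of the test-functions, so that they can legitimately be pulled out of the integral. Both points are immediate from the statements of the two lemmas, so the proof reduces to a short three-line estimate.
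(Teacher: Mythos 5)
Your proof is correct and follows essentially the same route as the paper: pull out the test-functions via the sup-norm bound, then integrate the uniform pointwise estimate $\D^\I_d = O(d^{p/4-1})$ from Lemma~\ref{lem partitions with a singleton} over the domain $M^{\I,p}_{\J,d}$. The one inessential step is invoking the refined volume bound of Lemma~\ref{lem volume M AIdp}: since the extra factor it provides is merely $O(1)$ and is discarded immediately, the crude bound $\vol{M^{\I,p}_{\J,d}} \leq \vol{M}^{\norm{\I}}$ (a constant independent of $d$) already suffices, which is what the paper implicitly uses.
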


\begin{proof}
We obtain this corollary by integrating the estimate of Lemma~\ref{lem partitions with a singleton} over $M^{\I,p}_{\J,d}$, using the fact that $\Norm{\iota_\I^* \phi}_\infty \leq \prod_{i=1}^p \Norm{\phi_i}_\infty$.
\end{proof}


\subsection{Contribution of the partitions into pairs}
\label{subsec contribution of the partitions into pairs}

Let $p \geq 2$ and let $\phi_1,\dots,\phi_p$ be test-functions. Recall that $m_p(\nu_d)(\phi_1,\dots,\phi_p)$ is the sum of the integrals~\eqref{eq integral MIpJd} for $\I \in \pa_p$ and $\J \in \pa_\I$, and that our final goal is to prove Theorem~\ref{thm main}. This theorem gives the asymptotics of $m_p(\nu_d)(\phi_1,\dots,\phi_p)$ as $d$ goes to infinity, up to an error term of the form $O\!\left(d^{\frac{1}{2}\floor*{\frac{p-1}{2}}}(\ln d)^p\right)$. We proved in Lemma~\ref{lem upper bound} that the integral~\eqref{eq integral MIpJd} only contributes an error term if $\norm{\J} < \frac{p}{2}$. Besides, we proved in Corollary~\ref{cor partitions with a singleton} that the integral~\eqref{eq integral MIpJd} also contributes an error term if there exists $j$ such that $\{\{j\}\} \in \J$, in particular if $\norm{\J}$ is too large. This last point will be made more precise in Section~\ref{subsec proof of the main theorem} below.

In this section, we study the integrals of the form~\eqref{eq integral MIpJd} that will contribute to the leading term in the asymptotics of $m_p(\nu_d)(\phi_1,\dots,\phi_p)$. These integrals are indexed by couples of partitions $(\I,\J)$ satisfying the following technical condition: the double partitions into pairs. Recall that we denoted by $\pp_A$ the set of partition into pairs of the finite set $A$, see Definition~\ref{def partitions}, and that we denoted by $\I_S$ the partition of $S \subset A$ induced by a partition $\I \in \pa_A$, see~Definition~\ref{def induced partition}.

\begin{dfn}[Double partition into pairs]
\label{def CA}
Let $A$ be a finite set, let $\I \in \pa_A$ and let $\J \in \pa_\I$. We say that a couple $(\I,\J)$ is a \emph{double partition into pairs} of $A$ if there exists $S \subset A$ such that:
\begin{enumerate}
\item \label{cond 1} $\I_S = \{\{s\} \mid s \in S\}$ and $\I \setminus \I_S = \I_{A \setminus S} \in \pp_{A \setminus S}$;
\item \label{cond 2} $\J_{\I_S} \in \pp_{\I_S}$ and $\J \setminus \J_{\I_S} = \J_{\I \setminus \I_S} = \{\{I\} \mid I \in \I \setminus \I_S\}$.
\end{enumerate}
We denote by $\mathfrak{C}_A$ the set of such double partitions into pairs of $A$. If $A = \{1,\dots,p\}$ we simply denote $\mathfrak{C}_p = \mathfrak{C}_A$.
\end{dfn}

Let us take some time to comment upon this definition. Note that we will mostly use it with $A = \{1,\dots,p\}$. However, the general case will be useful in Lemmas~\ref{lem partitions into pairs} and~\ref{lem partitions into pairs 2} below. Condition~\ref{cond 1} in Definition~\ref{def CA} simply means that $\I$ only contains singletons and pairs. The singletons are exactly those of the form $\{s\}$ with $s \in S$, and the pairs are formed of elements of $A \setminus S$. Note that $S$ is the union of the singletons in~$\I$. In particular, it is uniquely defined by $(\I,\J)$. Similarly, Condition~\ref{cond 2} means that $\J$ only contains singletons and pairs with some additional constraints. The singletons in $\J$ are exactly of the form $\{I\}$ where $I \notin \I_S$ (i.e.~$I$ is itself a pair of elements of $A \setminus S$) and the pairs are of the form $\{\{s\},\{t\}\}$ with $s,t \in S$ distinct. That is, $\J$ contains only pairs of singletons and singletons of pairs\dots

\begin{rem}
\label{rem conditions 1 and 2}
Let $A$ be a finite set and let $(\I,\J) \in \mathfrak{C}_A$. Let $S$ be as in Definition~\ref{def CA}. We have $\norm{S} = \norm{\I_S}$, and since $\I_S$ admits a partition $\J_{\I_S}$ into pairs, this cardinality is even. Similarly, since $\I \setminus \I_S$ is a partition into pairs of $A \setminus S$, we have $\norm{A} = \norm{S} + 2\norm{\I \setminus \I_S}$. Hence, $\norm{A}$ is even. In particular, $\mathfrak{C}_A$ is empty if $\norm{A}$ is odd.
\end{rem}

\begin{exa}
\label{ex CA}
Let $A$ be a finite set of even cardinality, the following are examples of double partitions into pairs showing that $\mathfrak{C}_A$ is not empty.
\begin{itemize}
\item Taking $S = \emptyset$, we have $\I_S = \emptyset$ and $\I = \I \setminus \I_S$ can be chosen as any element of $\pp_A$. Then there is only one possibility for $\J$, we must have $\J = \J_{\I \setminus \I_S} = \{\{I\} \mid I \in \I\}$.
\item Taking $S = A$, we must have $\I = \I_S = \{\{a\} \mid a \in A\}$. Identifying canonically $\I$ with $A$, we can choose any $\J \in \pp_\I \simeq \pp_A$.
\end{itemize}
\end{exa}

In the remainder of this section, we study the large $d$ asymptotics of integrals of the form~\eqref{eq integral MIpJd} where $(\I,\J) \in \mathfrak{C}_p$. For this, we need to show that the function $\D_d^\I$ defined in Lemma~\ref{lem integral expression} factorizes nicely on $M^{\I,p}_{\J,d}$, up to an error term. The first step is Lemma~\ref{lem partitions into pairs} where we factor the contribution from the pairs of singletons in $\J$, i.e~the elements of $\J_{\I_S}$.

\begin{lem}
\label{lem partitions into pairs}
Let $A$ be a finite set. For all $(\I,\J) \in \mathfrak{C}_A$, let $S \subset A$ be as in Definition~\ref{def CA}. Then, the following holds uniformly for all $\underline{x}_\I \in M^{\I,p}_{\J,d}$:
\begin{equation*}
\D_d^\I(\underline{x}_\I) = \D_d^{\I \setminus \I_S}(\underline{x}_{\I \setminus \I_S}) \left(\prod_{J \in \J_{\I_S}}\D_d^J(\underline{x}_J)\right) +O(d^{\frac{\norm{\I}}{2}-\frac{p}{4}-1}).
\end{equation*}
\end{lem}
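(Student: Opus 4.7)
The plan is to expand $\D_d^\I(\underline{x}_\I)$ using the formula from Lemma~\ref{lem integral expression}, to apply Proposition~\ref{prop multiplicativity Rk} iteratively in order to factor each Kac--Rice density appearing in the sum according to the clusters of $\J$, and to rearrange the resulting combinatorial expression until it matches the claimed factorization.

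First I would use Condition~\ref{cond 1} in Definition~\ref{def CA}: every element of $\I \setminus \I_S$ is a pair of size $2$, so any $T \in \mathcal{S}_\I$ must contain $A \setminus S$ and can be written uniquely as $T = (A \setminus S) \sqcup S'$ with $S' \subset S$. Since $\I_T = (\I \setminus \I_S) \sqcup \I_{S'}$, this parametrization turns the defining sum for $\D_d^\I(\underline{x}_\I)$ into a sum over $S' \subset S$. Next, for each $S'$ and each $\underline{x}_\I \in M^{\I,p}_{\J,d}$, I would apply Proposition~\ref{prop multiplicativity Rk} iteratively---peeling off one cluster of $\J$ at a time, which is allowed because distinct clusters are at geodesic distance at least $b_p\frac{\ln d}{\sqrt{d}}$ by Definition~\ref{def M AIdp}---to factor $\mathcal{R}_d^{\norm{\I_T}}(\underline{x}_{\I_T})$ as the product of a factor $\mathcal{R}_d^1(x_I)$ for each singleton cluster $\{I\} \in \J$ with $I \in \I \setminus \I_S$ (by Condition~\ref{cond 2}), and a factor $\mathcal{R}_d^{\norm{J \cap \I_{S'}}}(\underline{x}_{J \cap \I_{S'}})$ for each pair cluster $J \in \J_{\I_S}$, up to an error $O(d^{\frac{\norm{\I_T}}{2} - \frac{p}{4} - 1})$.

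Substituting back, each error term gets multiplied by $\prod_{s \in S \setminus S'} \mathcal{R}_d^1(x_{\{s\}}) = O(d^{\norm{S \setminus S'}/2})$ (using Proposition~\ref{prop bounded density}), and the identity $\norm{\I_T} + \norm{S \setminus S'} = \norm{\I \setminus \I_S} + \norm{\I_S} = \norm{\I}$ shows that the total contribution is bounded by $O(d^{\frac{\norm{\I}}{2} - \frac{p}{4} - 1})$ uniformly in $S'$, matching the error allowed by the lemma. The main term then factors over $J \in \J_{\I_S}$ via the bijection $S' \leftrightarrow (V_J)_J$ with $V_J = S' \cap s_J$, where the pair $J = \{\{s\},\{t\}\} \in \J_{\I_S}$ is naturally encoded by the doubleton $s_J = \{s, t\} \subset S$. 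A direct comparison with Lemma~\ref{lem integral expression} applied to the partition $J$ of $s_J$ then identifies the inner bracket associated to each $J$ as $\D_d^J(\underline{x}_J)$.

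To replace the leftover product $\prod_{I \in \I \setminus \I_S} \mathcal{R}_d^1(x_I)$ by $\D_d^{\I \setminus \I_S}(\underline{x}_{\I \setminus \I_S})$, I would observe that, since every element of $\I \setminus \I_S$ is a pair, the only $T' \in \mathcal{S}_{\I \setminus \I_S}$ is $T' = A \setminus S$, so $\D_d^{\I \setminus \I_S}(\underline{x}_{\I \setminus \I_S}) = \mathcal{R}_d^{\norm{\I \setminus \I_S}}(\underline{x}_{\I \setminus \I_S})$; a final application of Proposition~\ref{prop multiplicativity Rk} replaces the latter by $\prod_{I \in \I \setminus \I_S} \mathcal{R}_d^1(x_I)$ up to an error that is absorbed into $O(d^{\frac{\norm{\I}}{2} - \frac{p}{4} - 1})$ after multiplying by the bounded factor $\prod_{J \in \J_{\I_S}} \D_d^J(\underline{x}_J) = O(d^{\norm{S}/2})$. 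The main difficulty I anticipate is the combinatorial bookkeeping in the rearrangement step: one must carefully match the signs $(-1)^{\norm{S} - \norm{S'}}$, the residual factors $\mathcal{R}_d^1(x_{\{s\}})$ for $s \in S \setminus S'$, and the products over clusters $J$ so that the inner sum is precisely the telescoping expression defining $\D_d^J$. Controlling the error terms along the way is routine once this combinatorial structure is in place, since each application of Proposition~\ref{prop multiplicativity Rk} contributes an extra gain of exactly $d^{-p/4-1}$.
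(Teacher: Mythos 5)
Your proposal is correct, and it takes a genuinely different route from the paper's proof. The paper argues by induction on $\tfrac{1}{2}\norm{A}$: it peels off one pair $J = \{\{s\},\{t\}\} \in \J_{\I_S}$ at a time, establishing $\D^\I_d = \D_d^{\I \setminus J}\D_d^J + O(d^{\norm{\I}/2 - p/4 - 1})$ by grouping the sum defining $\D_d^\I$ into quadruples $A', A'\sqcup\{s\}, A'\sqcup\{t\}, A'\sqcup\{s,t\}$, applying Proposition~\ref{prop multiplicativity Rk} to each quadruple, and then invoking the induction hypothesis on $A\setminus\{s,t\}$ (via Lemma~\ref{lem M AIdp restricted} to stay in the right domain). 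You instead do the whole computation in one pass: parametrize $\mathcal{S}_\I$ by $S' \subset S$ (which is exactly right, since Condition~\ref{cond 1} forces every adapted $T$ to contain $A\setminus S$), fully factor each $\mathcal{R}_d^{\norm{\I_T}}$ along the clusters of $\J$ by iterating Proposition~\ref{prop multiplicativity Rk}, and then resum. Your bookkeeping checks out: the sign $(-1)^{\norm{S}-\norm{S'}}$ splits multiplicatively over the pairs $s_J$, the factors $\mathcal{R}_d^{\norm{V_J}}$ and the residual $\mathcal{R}_d^1(x_{\{s\}})$ for $s \in s_J\setminus V_J$ assemble exactly into the sum defining $\D_d^J(\underline{x}_J)$ (Lemma~\ref{lem integral expression} applied to the minimal partition $J$ of $s_J$), and the error bookkeeping $\norm{\I_T} + \norm{S\setminus S'} = \norm{\I}$ gives the uniform $O(d^{\norm{\I}/2 - p/4 - 1})$. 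Your final step converting $\prod_{I\in\I\setminus\I_S}\mathcal{R}_d^1(x_I)$ back into $\D_d^{\I\setminus\I_S}$ is also sound; as a minor simplification, you could skip it by stopping the factorization of $\mathcal{R}_d^{\norm{\I_T}}$ at $\mathcal{R}_d^{\norm{\I\setminus\I_S}}(\underline{x}_{\I\setminus\I_S})$ rather than splitting it all the way to $\mathcal{R}_d^1$'s, since $\D_d^{\I\setminus\I_S} = \mathcal{R}_d^{\norm{\I\setminus\I_S}}$. The trade-off between the two approaches: the paper's induction reuses a single, simple four-term cancellation and localizes the combinatorics to one pair at a time, while your direct approach makes the global combinatorial structure of the factorization fully explicit and avoids the need to verify that $(\I\setminus J, \J\setminus\{J\})\in\mathfrak{C}_B$ and that the restricted point stays in the right cluster domain.
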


\begin{proof}
Recall that, as discussed in Remark~\ref{rem conditions 1 and 2}, the set $\mathfrak{C}_A$ is empty if $\norm{A}$ is odd. Hence Lemma~\ref{lem partitions into pairs} is true in this case. If $\norm{A}$ is even, the proof is by induction on (one half of) $\norm{A}$.

\subparagraph*{Base case.}
If $\norm{A}=0$, we have $A = \emptyset$, hence $\I = \emptyset$, hence $\J = \emptyset$. Recalling the definition of $\D^\I_d$ in Lemma~\ref{lem integral expression} and our convention that $\mathcal{R}_d^0=1$, we have $\D^\I_d =1$ and similarly $\D^{\I\setminus \I_S}_d =1$. The product over $\J_{\I_S}$ is indexed by the empty set, hence also equal to $1$. Thus, the result is tautologically true.

\subparagraph*{Inductive step.}
Let $A$ be a finite set of even positive cardinality. Let us assume that the result of Lemma~\ref{lem partitions into pairs} holds for any finite set $B$ of cardinality $\norm{A}-2$. Let $(\I,\J) \in \mathfrak{C}_A$ and let $S$ be as in Definition~\ref{def CA}. Let $J \in \J_{\I_S}$, there exists $s,t \in S$ distinct such that $J =\{\{s\},\{t\}\}$. The key part of the proof is to show that, the following holds uniformly for all $\underline{x}_\I \in M^{\I,p}_{\J,d}$:
\begin{equation}
\label{eq inductive step}
\D^\I_d(\underline{x}_\I) = \D_d^{\I \setminus J}(\underline{x}_{\I \setminus J})\D_d^J(\underline{x}_J) +O(d^{\frac{\norm{\I}}{2}-\frac{p}{4}-1}).
\end{equation}

Let us assume for now that Equation~\eqref{eq inductive step} holds. Let us denote by $B = A \setminus \{s,t\}$. Then we have $\I \setminus J = \I \setminus \{\{s\},\{t\}\} \in \pa_B$ and $\J \setminus \{J\} \in \pa_{\I \setminus J}$. One can check that $(\I \setminus J, \J \setminus \{J\}) \in \mathfrak{C}_B$. Indeed, the partition $\I \setminus J$ only contains pairs and singletons, and the union of its singletons is $T = S \setminus \{s,t\}$. Then we have $(\I \setminus J)_T = \I_S \setminus J$, and $(\J \setminus \{J\})_{\I_S \setminus J} = \J_{\I_S} \setminus \{J\}$ is a partition into pairs of $\I_S \setminus J$. Moreover, the pairs in $\I \setminus J$ are the same as the pairs in $\I$. Since $\norm{B} = \norm{A}-2$, we can apply the induction hypothesis for $(\I \setminus J, \J \setminus \{J\}) \in \mathfrak{C}_B$.

Note that $\I \setminus J$ is the union of the elements of $\J \setminus \{J\}$. Then, by Lemma~\ref{lem M AIdp restricted}, if $\underline{x}_\I \in M^{\I,p}_{\J,d}$ we have $\underline{x}_{\I \setminus J} \in M^{\I \setminus J,p}_{\J \setminus \{J\},d}$. Hence, by induction, the following holds uniformly for all $\underline{x}_\I \in M^{\I,p}_{\J,d}$:
\begin{equation*}
\D_d^{\I \setminus J}(\underline{x}_{\I \setminus J}) = \D_d^{\I \setminus \I_S}(\underline{x}_{\I \setminus \I_S}) \left(\prod_{K \in \J_{\I_S} \setminus \{J\}}\D_d^K(\underline{x}_K)\right) +O(d^{\frac{\norm{\I}}{2}-\frac{p}{4}-2}).
\end{equation*}
We use this relation in Equation~\eqref{eq inductive step}. Since $\D_d^J$ is uniformly $O(d)$ by Proposition~\ref{prop bounded density}, the conclusion of Lemma~\ref{lem partitions into pairs} is satisfied for $(\I,\J) \in \mathfrak{C}_A$. This concludes the inductive step.

\subparagraph*{Proof of Equation~\eqref{eq inductive step}.}
In order to complete the proof of Lemma~\ref{lem partitions into pairs}, we need to prove that Equation~\eqref{eq inductive step} holds. The proof of this fact is in the same spirit as the proof of Lemma~\ref{lem partitions with a singleton}. We start from the definition of $\D_d^\I(\underline{x}_\I)$, given in  Lemma~\ref{lem integral expression}, and regroup the terms in the sum in fours. For each of these quadruples of summands we apply Proposition~\ref{prop multiplicativity Rk}, using the fact that $\underline{x}_\I \in M^{\I,p}_{\J,d}$. This yields the result, keeping in mind the uniform upper bound of Proposition~\ref{prop bounded density}.

Recall that $\D_d^\I$ is defined as a sum indexed by $\mathcal{S}_\I$, and that the elements of $\mathcal{S}_\I$ are the subsets of $\{1,\dots,p\}$ adapted to $\I$ (see Definition~\ref{def subset adapted to I}). Since $\{s\}$ and $\{t\} \in \I$, we can decompose $\mathcal{S}_\I$ as the following disjoint union:
\begin{equation*}
\mathcal{S}_\I = \bigsqcup_{\{A \in \mathcal{S}_\I \mid s \notin A, t \notin A\} } \left\{A, A \sqcup \{s\}, A \sqcup \{t\}, A \sqcup \{s,t\}\right\}.
\end{equation*}

Let $A \in \mathcal{S}_\I$ be such that $A \subset \{1,\dots,p\} \setminus \{s,t\}$. Given $\underline{x}_\I \in M^\I$, we regroup the four terms corresponding to $A$, $A \sqcup \{s\}$, $A \sqcup \{t\}$ and $A \sqcup \{s,t\}$ in the sum defining $\D_d^\I(\underline{x}_\I)$. Keeping in mind that the Kac--Rice densities $(\mathcal{R}_d^k)_{k \geq 1}$ are symmetric functions of their arguments, we obtain:
\begin{equation}
\label{eq difference four terms}
\begin{aligned}
(-1)^{p-\norm{A}} &\left(\mathcal{R}_d^{\norm{\I_A}+2}(\underline{x}_{\I_A},x_{\{s\}},x_{\{t\}}) - \mathcal{R}_d^{\norm{\I_A}+1}(\underline{x}_{\I_A},x_{\{s\}})\mathcal{R}^1_d(x_{\{t\}})\right.\\
&\left.- \mathcal{R}_d^{\norm{\I_A}+1}(\underline{x}_{\I_A},x_{\{t\}})\mathcal{R}^1_d(x_{\{s\}}) + \mathcal{R}_d^{\norm{\I_A}}(\underline{x}_{\I_A})\mathcal{R}^1_d(x_{\{s\}})\mathcal{R}^1_d(x_{\{t\}})\right) \prod_{i \notin A \sqcup \{s,t\}} \mathcal{R}^1_d(x_{\{i\}})
\end{aligned}
\end{equation}
If $\underline{x}_\I \in M^{\I,p}_{\J,d}$, then for any $I \in \I \setminus \{\{s\},\{t\}\}$ we have $\rho_g(x_I,x_{\{s\}}) \geq b_p\frac{\ln d}{\sqrt{d}}$ and $\rho_g(x_I,x_{\{t\}}) \geq b_p\frac{\ln d}{\sqrt{d}}$ by definition. Applying Propositions~\ref{prop bounded density} and~\ref{prop multiplicativity Rk} in Equation~\eqref{eq difference four terms}, we obtain:
\begin{equation*}
(-1)^{p-\norm{A}} \left(\prod_{i \notin A \sqcup \{s,t\}} \mathcal{R}^1_d(x_{\{i\}})\right)\mathcal{R}_d^{\norm{\I_A}}(\underline{x}_{\I_A}) \D_d^J(\underline{x}_J)+O\!\left(d^{\frac{\norm{\I_A}+p-\norm{A}}{2}-\frac{p}{4}-1}\right).
\end{equation*}
Since $A$ is adapted to $\I$, we have $\I = \I_A \sqcup \{\{i\} \mid i \notin A\}$ by Definition~\ref{def subset adapted to I}. This implies that $\norm{\I} =\norm{\I_A} +p - \norm{A}$. Hence the error term in the previous equation is $O(d^{\frac{\norm{\I}}{2}-\frac{p}{4}-1})$. Summing these terms over the subsets $A \in \mathcal{S}_\I$ such that $A \subset \{1,\dots,p\} \setminus \{s,t\}$, we proved that Equation~\eqref{eq inductive step} is satisfied uniformly for all $\underline{x}_\I \in M^{\I,p}_{\J,d}$.
\end{proof}

With the same notations as in Lemma~\ref{lem partitions into pairs}, the second step in our factorization of $\D^\I_d$ is to factor the contributions from the singletons in $\J$, that is from the $\{\{I\} \mid  I \in \I \setminus \I_S\}$. This is the purpose of the following lemma.

\begin{lem}
\label{lem partitions into pairs 2}
Let $A$ be a finite set, let $(\I,\J) \in \mathfrak{C}_A$ and let $S \subset A$ be as in Definition~\ref{def CA}. Then, the following holds uniformly for all $\underline{x}_\I \in M^{\I,p}_{\J,d}$:
\begin{equation*}
\D_d^\I(\underline{x}_\I) = \left(\prod_{I \in \I \setminus \I_S} \mathcal{R}_d^1(x_I)\right) \left(\prod_{J \in \J_{\I_S}}\D_d^J(\underline{x}_J)\right) +O(d^{\frac{\norm{\I}}{2}-\frac{p}{4}-1}).
\end{equation*}
\end{lem}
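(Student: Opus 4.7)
The plan is to build on Lemma~\ref{lem partitions into pairs}, which already gives the identity
\[
\D_d^\I(\underline{x}_\I) = \D_d^{\I\setminus\I_S}(\underline{x}_{\I\setminus\I_S}) \prod_{J\in\J_{\I_S}}\D_d^J(\underline{x}_J) + O\!\left(d^{\norm{\I}/2-p/4-1}\right)
\]
uniformly on $M^{\I,p}_{\J,d}$. Thus it suffices to prove the separate factorization
\[
\D_d^{\I\setminus\I_S}(\underline{x}_{\I\setminus\I_S}) = \prod_{I\in\I\setminus\I_S}\mathcal{R}_d^1(x_I) + O\!\left(d^{\norm{\I\setminus\I_S}/2-p/4-1}\right)
\]
on the same domain, and then to propagate this substitution through the product while controlling the multiplicative error produced by the factor $\prod_{J\in\J_{\I_S}}\D_d^J$.

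For the main factorization, I will exploit two structural observations. First, by Condition~\ref{cond 1} of Definition~\ref{def CA}, the partition $\I\setminus\I_S=\I_{A\setminus S}$ belongs to $\pp_{A\setminus S}$ and therefore contains no singletons; consequently every class of $\I\setminus\I_S$ must be entirely contained in any subset of $A\setminus S$ adapted to it, which forces $\mathcal{S}_{\I\setminus\I_S}=\{A\setminus S\}$. The defining sum of $\D_d^{\I\setminus\I_S}$ in Lemma~\ref{lem integral expression} then collapses to a single term, giving $\D_d^{\I\setminus\I_S}(\underline{x}_{\I\setminus\I_S})=\mathcal{R}_d^{\norm{\I\setminus\I_S}}(\underline{x}_{\I\setminus\I_S})$. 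Second, Condition~\ref{cond 2} of Definition~\ref{def CA} together with the hypothesis $\underline{x}_\I\in M^{\I,p}_{\J,d}$ forces each $I\in\I\setminus\I_S$ to be an isolated vertex of $G_d^p(\underline{x}_\I)$, so the points $(x_I)_{I\in\I\setminus\I_S}$ are pairwise separated by geodesic distance at least $b_p\ln d/\sqrt d$. I can therefore apply Proposition~\ref{prop multiplicativity Rk} iteratively, peeling off one factor at a time: a straightforward induction on $m=\norm{\I\setminus\I_S}$, using the uniform estimate $\mathcal{R}_d^1=O(d^{1/2})$ from Proposition~\ref{prop bounded density} to absorb the accumulating remainders, yields the desired expansion of $\mathcal{R}_d^m(\underline{x}_{\I\setminus\I_S})$ as a product of $\mathcal{R}_d^1$'s with cumulative error $O(d^{m/2-p/4-1})$.

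The subtlest step is the final accounting: substituting this factorization into Lemma~\ref{lem partitions into pairs} introduces an extra error of the form $\left(\prod_{J\in\J_{\I_S}}\D_d^J(\underline{x}_J)\right)\cdot O(d^{\norm{\I\setminus\I_S}/2-p/4-1})$, and bringing it back into the target bound $O(d^{\norm{\I}/2-p/4-1})$ requires an accurate uniform bound on the multiplier. Each $\D_d^J$ is a fixed finite $\Z$-linear combination of Kac--Rice densities of total order two, and is therefore $O(d)$ by Proposition~\ref{prop bounded density}. Since $\J_{\I_S}\in\pp_{\I_S}$ has exactly $\norm{\I_S}/2$ elements, the whole product is $O(d^{\norm{\I_S}/2})$, and using $\norm{\I_S}+\norm{\I\setminus\I_S}=\norm{\I}$ the combined error telescopes exactly to $O(d^{\norm{\I}/2-p/4-1})$. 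I expect the main obstacle to lie in this bookkeeping rather than in the induction itself: even though the factorization of $\mathcal{R}_d^m$ has a nominally weaker remainder (depending on $m$ rather than on $\norm{\I}$), the crude but sharp uniform bound on $\prod_J\D_d^J$ compensates it exactly, and keeping track of the fact that this compensation is tight is the key technical point.
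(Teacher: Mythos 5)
Your proposal follows the paper's proof almost step for step: starting from Lemma~\ref{lem partitions into pairs}, collapsing $\D_d^{\I\setminus\I_S}$ to $\mathcal{R}_d^{\norm{\I\setminus\I_S}}$ via $\mathcal{S}_{\I\setminus\I_S}=\{A\setminus S\}$, iterating Proposition~\ref{prop multiplicativity Rk} with Proposition~\ref{prop bounded density} on the separated points indexed by $\I\setminus\I_S$, and closing the error budget using the $O(d^{\norm{\I_S}/2})$ bound on $\prod_{J\in\J_{\I_S}}\D_d^J$ together with $\norm{\I}=\norm{\I_S}+\norm{\I\setminus\I_S}$. The argument is correct and essentially identical to the one in the paper.
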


\begin{proof}
Applying first Lemma~\ref{lem partitions into pairs}, uniformly for all $\underline{x}_\I \in M^{\I,p}_{\J,d}$ we have:
\begin{equation}
\label{eq part into pairs 1}
\D_d^\I(\underline{x}_\I) = \D_d^{\I \setminus \I_S}(\underline{x}_{\I \setminus \I_S}) \left(\prod_{J \in \J_{\I_S}}\D_d^J(\underline{x}_J)\right) +O(d^{\frac{\norm{\I}}{2}-\frac{p}{4}-1}).
\end{equation}
By Condition~\ref{cond 1} in Definition~\ref{def CA}, the partition $\I \setminus \I_S$ is a partition into pairs of $A \setminus S$. Recalling Definition~\ref{def subset adapted to I}, we have $\mathcal{S}_{\I \setminus \I_S} = \{A \setminus S\}$ hence $\D_d^{\I \setminus \I_S} = \mathcal{R}_d^{\norm{\I \setminus \I_S}}$.

Let $\underline{x}_\I \in M^{\I,p}_{\J,d}$, by Condition~\ref{cond 2} in Definition~\ref{def CA}, for all $I \in \I \setminus \I_S$ we have $\{I\} \in \J$, hence $x_I$ is far from the other components of $\underline{x}_\I$. In particular, for all $I,J \in \I \setminus \I_S$ distinct we have $\rho_g(x_I,x_J) \geq b_p \frac{\ln d}{\sqrt{d}}$. Applying Proposition~\ref{prop multiplicativity Rk} several times, we obtain:
\begin{equation}
\label{eq part into pairs 2}
\D_d^{\I \setminus \I_S}(\underline{x}_{\I \setminus \I_S}) = \mathcal{R}_d^{\norm{\I \setminus \I_S}}(\underline{x}_{\I \setminus \I_S}) = \prod_{I \in \I \setminus \I_S} \mathcal{R}_d^1(x_I) + O(d^{\frac{\norm{\I \setminus \I_S}}{2}-\frac{p}{4}-1}),
\end{equation}
where the error term is obtained by using the uniform upper bound of Proposition~\ref{prop bounded density}.

By Proposition~\ref{prop bounded density} once again, for any $J \in \J_{\I_S}$ we have $\D_d^J(\underline{x}_J) = O(d)$ uniformly on $M^J$. Since, $\J_{\I_S}$ is a partition into pairs of $\I_S$, we have $\norm{\I_S} = 2\norm{\J_{\I_S}}$. Hence, we have uniformly:
\begin{equation}
\label{eq part into pairs 3}
\prod_{J \in \J_{\I_S}}\D_d^J(\underline{x}_J) = O(d^\frac{\norm{\I_S}}{2}).
\end{equation}
Since $\norm{\I} = \norm{\I_S} + \norm{\I \setminus \I_S}$, the result follows from Equations~\eqref{eq part into pairs 1}, \eqref{eq part into pairs 2} and~\eqref{eq part into pairs 3}.
\end{proof}

We can finally derive the asymptotics of the integrals of the form~\eqref{eq integral MIpJd} for $(\I,\J) \in \mathfrak{C}_p$, which is the main result of this section.

\begin{lem}
\label{lem leading term}
Let $p \geq 2$, let $(\I,\J) \in \mathfrak{C}_p$ and let $S$ be as in Definition~\ref{def CA}. Then, for any $\phi_1,\dots,\phi_p \in \mathcal{C}^0(M)$, we have:
\begin{multline*}
\int_{\underline{x}_\I \in M^{\I,p}_{\J,d}}\!\iota_\I^*\phi(\underline{x}_\I) \D^\I_d(\underline{x}_\I) \rmes{M}^{\norm{\I}}= \!\left(\prod_{\{i,j\} \in \I \setminus \I_S} \int_M \phi_i\phi_j\mathcal{R}^1_d \rmes{M}\right)\!\left(\prod_{J \in \J_{\I_S}} \int_{M^J} \!\phi_J \D_d^J \rmes{M}^2 \right)\\+ \left(\prod_{i=1}^p \Norm{\phi_i}_\infty\right) O\!\left(d^{\frac{1}{2}\floor*{\frac{p-1}{2}}}(\ln d)^p\right),
\end{multline*}
where the constant involved in the $O\!\left(d^{\frac{1}{2}\floor*{\frac{p-1}{2}}}(\ln d)^p\right)$ is independent of $(\phi_1,\dots,\phi_p)$.
\end{lem}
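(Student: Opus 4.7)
The plan is to combine the factorization result of Lemma~\ref{lem partitions into pairs 2} with a careful swap of integration domains, using the inclusion provided by Lemma~\ref{lem M AIdp blurred}, the volume estimate of Lemma~\ref{lem volume M AIdp}, and the multiplicativity bound of Proposition~\ref{prop multiplicativity Rk}. A preliminary observation: since $\mathfrak{C}_p = \emptyset$ when $p$ is odd (Remark~\ref{rem conditions 1 and 2}), one may assume $p$ even, and a direct count from Definition~\ref{def CA} gives $\norm{\J} = \norm{S}/2 + (p-\norm{S})/2 = p/2$. Hence the target error $O(d^{\frac{1}{2}\floor*{\frac{p-1}{2}}}(\ln d)^p) = O(d^{(p-2)/4}(\ln d)^p)$ sits exactly a factor $d^{-1/2}$ below the leading order $d^{p/4}$, and every error estimate in the proof is calibrated to exploit this $\sqrt d$ gap.

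The first step will apply Lemma~\ref{lem partitions into pairs 2} to write $\D_d^\I = F + O(d^{\norm{\I}/2 - p/4 - 1})$ uniformly on $M^{\I,p}_{\J,d}$, where $F(\underline{x}_\I) = \prod_{I \in \I \setminus \I_S}\mathcal{R}_d^1(x_I)\cdot\prod_{J \in \J_{\I_S}}\D_d^J(\underline{x}_J)$. Combining $\Norm{\iota_\I^*\phi}_\infty \leq \prod_i \Norm{\phi_i}_\infty$ with the volume bound $O((\ln d/\sqrt d)^{\norm{\I}-\norm{\J}})$ from Lemma~\ref{lem volume M AIdp}, the integrated error amounts to $\prod_i\Norm{\phi_i}_\infty\cdot O(d^{\norm{\J}/2 - p/4 - 1}(\ln d)^{\norm{\I}-\norm{\J}}) = \prod_i\Norm{\phi_i}_\infty\cdot O(d^{-1}(\ln d)^p)$, comfortably within tolerance.

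Next, the integrand $\iota_\I^*\phi\cdot F$ will factorize as a product $\prod_{K\in\J}f_K$, with $f_{\{I\}}(x_I)=\phi_i(x_I)\phi_j(x_I)\mathcal{R}_d^1(x_I)$ for $I=\{i,j\}\in\I\setminus\I_S$, and $f_J(\underline{x}_J)=\phi_J(\underline{x}_J)\D_d^J(\underline{x}_J)$ for $J\in\J_{\I_S}$. The idea is then to enlarge the integration domain to $P_2 = \prod_{K\in\J}M^{K,p}_{\{K\},d}$; by Lemma~\ref{lem M AIdp blurred}, the difference $P_2\setminus M^{\I,p}_{\J,d}$ lies in $\bigsqcup_{\J'>\J}M^{\I,p}_{\J',d}$. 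Since Proposition~\ref{prop bounded density} gives $F = O(d^{\norm{\I}/2})$ uniformly, the same computation as in Lemma~\ref{lem upper bound} bounds each such piece by $\prod_i\Norm{\phi_i}_\infty\cdot O(d^{\norm{\J'}/2}(\ln d)^{\norm{\I}-\norm{\J'}})$; strict coarsening $\J'>\J$ forces $\norm{\J'} \leq p/2 - 1$, producing the desired error $\prod_i\Norm{\phi_i}_\infty\cdot O(d^{(p-2)/4}(\ln d)^p)$. On $P_2$ the integral is a genuine product, contributing $\int_M\phi_i\phi_j\mathcal{R}_d^1\rmes{M}$ for each $\{I\}\in\J\setminus\J_{\I_S}$ and $\int_{M^{J,p}_{\{J\},d}}\phi_J\D_d^J\rmes{M}^2$ for each $J\in\J_{\I_S}$.

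Finally, one removes the proximity constraint on each $J\in\J_{\I_S}$: on $M^J\setminus M^{J,p}_{\{J\},d}$ the two base points are $b_p\ln d/\sqrt d$-apart, so Proposition~\ref{prop multiplicativity Rk} yields $\D_d^J = \mathcal{R}_d^2 - \mathcal{R}_d^1\otimes\mathcal{R}_d^1 = O(d^{-p/4})$; each such replacement costs $O(d^{-p/4})$ in that factor and, after multiplication with the $p/2-1$ remaining factors of order $d^{1/2}$, $O(d^{-1/2})$ in the full product — again negligible. Combining all errors recovers the claimed asymptotic. The hard part is the second step: one must verify that the combinatorics of $\mathfrak{C}_p$ (equivalently $\norm{\J} = p/2$) really does force every strict coarsening $\J'>\J$ to drop the cardinality by at least one, delivering the critical $\sqrt d$ saving that Lemma~\ref{lem volume M AIdp} must supply against the uniform bound on $F$.
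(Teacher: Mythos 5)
Your proposal is correct and follows the same three-step plan as the paper's proof: (1) factor the integrand on $M^{\I,p}_{\J,d}$ via Lemma~\ref{lem partitions into pairs 2}, (2) replace $M^{\I,p}_{\J,d}$ by the product $\prod_{K\in\J} M^{K,p}_{\{K\},d}$ using Lemma~\ref{lem M AIdp blurred} together with the upper bound from Proposition~\ref{prop bounded density} and the volume estimate of Lemma~\ref{lem volume M AIdp}, then (3) enlarge each $M^{J,p}_{\{J\},d}$ to $M^J$ using the off-diagonal estimate of Proposition~\ref{prop multiplicativity Rk}. Two small remarks: your bound of $O(d^{1/2})$ for the remaining $\D_d^J$-factors in the last step should carry a $\ln d$ (or one can simply bound them by $O(d)$ as the paper does); this does not threaten the final error budget. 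Also, the fact you flag as ``the hard part'' -- that $\J'>\J$ strictly forces $\norm{\J'}\leq\norm{\J}-1$ -- is immediate from the definition of the order on partitions (merging two blocks strictly decreases the count); the identity $\norm{\J}=p/2$ for double partitions into pairs, which you already compute, is what makes that drop sufficient.
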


\begin{proof}
The idea of the proof is to use the factorization obtained in Lemma~\ref{lem partitions into pairs 2} for the density function $\D^\I_d$. The test-function $\iota_\I^*\phi$ also factorizes, hence we can write the integrand $(\iota_\I^*\phi) \D^\I_d$ as a nice product. The difficulty is that the domain of integration $M^{\I,p}_{\J,d}$ is not a Cartesian product. To deal with this, we use Lemma~\ref{lem M AIdp blurred} to show that we can replace $M^{\I,p}_{\J,d}$ by a product of domains of dimension $1$ or $2$, up to an error term.

\subparagraph*{Factorization of the integrand.}
Recall that, by Condition~\ref{cond 1} in Definition~\ref{def CA}, the partition $\I$ only contains singletons and pairs, and $S$ is the union of the singletons of $\I$. Let $\underline{x}_\I \in M^\I$, we have:
\begin{equation*}
\iota_\I^*\phi(\underline{x}_\I)\ = \phi(\iota_\I(\underline{x}_\I)) = \prod_{I \in \I} \prod_{i \in I} \phi_i(x_I) = \left(\prod_{\{i,j\}=I \in \I \setminus \I_S} \phi_i(x_I)\phi_j(x_I)\right) \left(\prod_{\{i\} \in \I_S} \phi_i(x_{\{i\}})\right).
\end{equation*}
Let $F_d^\I: M^\I \to \R$ be the function defined by:
\begin{equation}
\label{eq def FId}
F_d^\I(\underline{x}_\I) = \left(\prod_{\{i,j\}=I \in \I \setminus \I_S} \phi_i(x_I)\phi_j(x_I)\mathcal{R}^1_d(x_I)\right)\left(\prod_{J \in \J_{\I_S}} \phi_J(\underline{x}_J)\D_d^J(\underline{x}_J)\right)
\end{equation}
for all $\underline{x}_\I \in M^\I$. Here we use Notation~\ref{ntn product indexed by A}, hence for any $J = \{\{i\},\{j\}\} \in \J$ we have:
\begin{equation*}
\phi_J(\underline{x}_J) = \phi_{\{i\}}(x_{\{i\}})\phi_{\{j\}}(x_{\{j\}}) = \phi_i(x_{\{i\}})\phi_j(x_{\{j\}}).
\end{equation*}

Note that since $\I \in \pa_p$, we have $\norm{\I} \leq p$ so that $\frac{\norm{\I}}{2}-\frac{p}{4}-1 \leq \frac{p}{4}-1 < \frac{1}{2}\floor*{\frac{p-1}{2}}$. Applying Lemma~\ref{lem partitions into pairs 2}, we obtain:
\begin{equation}
\label{eq leading term}
\iota_\I^*\phi(\underline{x}_\I) \D^\I_d(\underline{x}_\I) =  F_d^\I(\underline{x}_I)+\left(\prod_{i=1}^p \Norm{\phi_i}_\infty\right) o(d^{\frac{1}{2}\floor*{\frac{p-1}{2}}}),
\end{equation}
where the error term does not depend on $\underline{x}_\I$ or $(\phi_i)_{1 \leq i \leq p}$. Thus, up to an error term, the quantity we are interested in is the integral of $F_d^\I$ over $M^{\I,p}_{\J,d}$.

\subparagraph*{Changing the domain of integration.}
By Lemma~\ref{lem M AIdp blurred}, we have:
\begin{equation*}
M^{\I,p}_{\J,d} \subset \prod_{J \in \J} M^{J,p}_{\{J\},d} \subset \bigsqcup_{\K \geq \J} M^{\I,p}_{\K,d},
\end{equation*}
hence
\begin{equation}
\label{eq domain of integration}
\prod_{J \in \J} M^{J,p}_{\{J\},d} = M^{\I,p}_{\J,d} \sqcup \bigsqcup_{\K > \J} \left(M^{\I,p}_{\K,d} \cap \prod_{J \in \J} M^{J,p}_{\{J\},d}\right).
\end{equation}

We want to prove that the integral of $F_d^\I$ over $M^{\I,p}_{\J,d}$ is equal to its integral over $\prod_{J \in \J} M^{J,p}_{\{J\},d}$, up to an error term. Let $\K \in \pa_\I$ be such that $\K > \J$. We can bound the integral of $F_d^\I$ over $\Omega_\K = M^{\I,p}_{\K,d} \cap \prod_{J \in \J} M^{J,p}_{\{J\},d}$, using the same method as in the proof of Lemma~\ref{lem upper bound}. We have:
\begin{equation*}
\norm{\int_{\underline{x}_\I \in \Omega_\K} F_d^\I(\underline{x}_\I) \rmes{M}^{\norm{\I}}} \leq \int_{\underline{x}_\I \in \Omega_\K} \norm{F_d^\I(\underline{x}_\I)} \rmes{M}^{\norm{\I}} \leq \int_{\underline{x}_\I \in M^{\I,p}_{\K,d}} \norm{F_d^\I(\underline{x}_\I)} \rmes{M}^{\norm{\I}}.
\end{equation*}
Using Proposition~\ref{prop bounded density} in Equation~\eqref{eq def FId}, we prove that the function $F_d^\I$ is bounded uniformly over~$M^\I$ by $\left(\prod_{i=1}^p \Norm{\phi_i}_\infty\right) O(d^\frac{\norm{\I}}{2})$. Then, by Lemma~\ref{lem volume M AIdp}, we have:
\begin{equation}
\label{eq error term K}
\norm{\int_{\underline{x}_\I \in \Omega_\K} F(\underline{x}_\I) \rmes{M}^{\norm{\I}}} \leq \Norm{F^\I_d}_\infty \vol{M^{\I,p}_{\K,d}} = \left(\prod_{i=1}^p \Norm{\phi_i}_\infty\right) O\!\left(d^{\frac{\norm{\K}}{2}}(\ln d)^{\norm{\I}-\norm{\K}}\right).
\end{equation}

Recall that $\K > \J$, so that $\norm{\K} < \norm{\J}$, as discussed after Definition~\ref{def order on partitions}. Since $(\I,\J) \in \mathfrak{C}_p$, by~Definition~\ref{def CA} we have:
\begin{equation*}
\norm{\J} = \frac{1}{2}\norm{\I_S} + \norm{\I \setminus \I_S} = \frac{1}{2}\norm{S} + \frac{1}{2}(p - \norm{S}) = \frac{p}{2}.
\end{equation*}
Thus, we have $\norm{\K} \leq \norm{\J}-1 = \frac{p}{2}-1 \leq \floor*{\frac{p-1}{2}}$. On the other hand, $\norm{\I} - \norm{\K} \leq \norm{\I} \leq p$. Finally, using Equations~\eqref{eq domain of integration} and~\eqref{eq error term K}, we obtain:
\begin{equation}
\label{eq integral FId}
\int_{M^{\I,p}_{\J,d}} F_d^\I \rmes{M}^{\norm{\I}} = \int_{\prod_{J \in \J} M^{J,p}_{\{J\},d}} F_d^\I \rmes{M}^{\norm{\I}} +\left(\prod_{i=1}^p \Norm{\phi_i}_\infty\right)O\!\left(d^{\frac{1}{2}\floor*{\frac{p-1}{2}}}(\ln d)^p\right).
\end{equation}

By Condition~\ref{cond 2} in Definition~\ref{def CA}, if $J \in \J \setminus \J_{\I_S}$ then $J = \{I\}$ for some $I \in \I \setminus \I_S$, so that $M^{J,p}_{\{J\},d} = M$. Bearing this fact in mind, Equations~\eqref{eq def FId}, \eqref{eq leading term} and~\eqref{eq integral FId} yield:
\begin{equation}
\label{eq integral over J}
\begin{aligned}
\int_{\underline{x}_\I \in M^{\I,p}_{\J,d}} \!\iota_\I^*\phi(\underline{x}_\I) \D^\I_d(\underline{x}_\I) \rmes{M}^{\norm{\I}}\!= \left(\prod_{\{i,j\} \in \I \setminus \I_S} \int_M \phi_i\phi_j\mathcal{R}^1_d \rmes{M}\right)&\!\left(\prod_{J \in \J_{\I_S}} \int_{M^{J,p}_{\{J\},d}}\!\phi_J \D_d^J \rmes{M}^2 \right)\\
+ \left(\prod_{i=1}^p \Norm{\phi_i}_\infty\right)& O\!\left(d^{\frac{1}{2}\floor*{\frac{p-1}{2}}}(\ln d)^p\right).
\end{aligned}
\end{equation}

\subparagraph*{Conclusion of the proof.}
In order to conclude the proof, we need to replace the integral over $M^{J,p}_{\{J\},d}$ by an integral over $M^J$ in Equation~\eqref{eq integral over J}, for all $J \in \J_{\I_S}$.

Let $J \in \J_{\I_S}$, there exists $i$ and $j \in S$ such that $J = \{\{i\},\{j\}\}$. Then, $\pa_J$ has exactly two elements: $\{J\}$ and $\I_0(J) = \left\{\rule{0em}{2.2ex}\{\{i\}\},\{\{j\}\}\right\}$. By Proposition~\ref{prop multiplicativity Rk} and the definition of $\D_d^J$ (see Lemma~\ref{lem integral expression}), we have the following uniform estimate for all $\underline{x}_J \in M^{J,p}_{\I_0(J),d}$:
\begin{equation*}
\D_d^J(\underline{x}_J) = \mathcal{R}^2_d(x_{\{i\}},x_{\{j\}}) - \mathcal{R}^1_d(x_{\{i\}})\mathcal{R}^1_d(x_{\{j\}}) = O(d^{-\frac{p}{4}}).
\end{equation*}
Since $M^J = M^{J,p}_{\{J\},d} \sqcup M^{J,p}_{\I_0(J),d}$, we obtain:
\begin{equation*}
\int_{M^{J,p}_{\{J\},d}} \phi_J \D_d^J \rmes{M}^2 = \int_{M^J} \phi_J \D_d^J \rmes{M}^2 + \Norm{\phi_J}_\infty O(d^{-\frac{p}{4}}).
\end{equation*}
Then, using one last time the upper bound of Proposition~\ref{prop bounded density}, the leading term on the right-hand side of Equation~\eqref{eq integral over J} equals:
\begin{equation*}
\left(\prod_{\{i,j\} \in \I \setminus \I_S} \int_M \phi_i\phi_j\mathcal{R}^1_d \rmes{M}\right)\!\left(\prod_{J \in \J_{\I_S}} \int_{M^J} \phi_J \D_d^J \rmes{M}^2 \right) + \left(\prod_{i=1}^p \Norm{\phi_i}_\infty\right) O(d^{\frac{\norm{\I \setminus \I_S}}{2}+\norm{\J_{\I_S}}-1-\frac{p}{4}}).
\end{equation*}
The conclusion follows using that $(\I,\J) \in\mathfrak{C}_p$. Indeed, by Definition~\ref{def CA}, we have:
\begin{equation*}
\frac{\norm{\I \setminus \I_S}}{2}+\norm{\J_{\I_S}}-1-\frac{p}{4} = \frac{p-\norm{S}}{4} + \frac{\norm{\I_S}}{2} - \frac{p}{4} -1 = \frac{\norm{S}}{4}-1 \leq \frac{p}{4}-1  < \frac{1}{2}\floor*{\frac{p-1}{2}}.\qedhere
\end{equation*}
\end{proof}


\subsection{Proof of Theorem~\ref{thm main}}
\label{subsec proof of the main theorem}

In this section, we conclude the proof of our main result, that is the moments estimates of Theorem~\ref{thm main}. The key argument is that the leading term in the asymptotics of the $m_p(\nu_d)(\phi_1,\dots,\phi_p)$ comes from integrals of the form~\eqref{eq integral MIpJd}, where $(\I,\J) \in \mathfrak{C}_p$ is one of the double partitions into pairs studied in Section~\ref{subsec contribution of the partitions into pairs}. The integrals of the form~\eqref{eq integral MIpJd} with $(\I,\J) \notin \mathfrak{C}_p$ only contribute an error term. This last fact is proved in the following lemma.

\begin{lem}
\label{lem order of the error terms}
Let $p \geq 2$, let $\I \in \pa_p$ and let $\J \in \pa_\I$ be such that $(\I,\J) \notin \mathfrak{C}_p$. Then, for all $\phi_1,\dots,\phi_p \in \mathcal{C}^0(M)$ we have:
\begin{equation*}
\int_{\underline{x}_\I \in M^{\I,p}_{\J,d}} \iota_\I^*\phi(\underline{x}_\I) \D^\I_d(\underline{x}_\I) \rmes{M}^{\norm{\I}} = \left(\prod_{i=1}^p \Norm{\phi_i}_\infty \right)O\!\left(d^{\frac{1}{2}\floor*{\frac{p-1}{2}}}(\ln d)^p\right),
\end{equation*}
where the error term does not depend on $(\phi_i)_{1 \leq i \leq p}$.
\end{lem}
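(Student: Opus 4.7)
The plan is to split into two cases according to whether $(\I,\J)$ contains a ``singleton of a singleton''. Precisely, I will say that $(\I,\J)$ is in \emph{Case~(a)} if there exists $j \in \{1,\dots,p\}$ such that $\{j\} \in \I$ and $\{\{j\}\} \in \J$, and in \emph{Case~(b)} otherwise. In Case~(a), Corollary~\ref{cor partitions with a singleton} applies directly and gives the stronger estimate $\left(\prod \Norm{\phi_i}_\infty\right) O(d^{\frac{p}{4}-1})$. A quick check shows $\frac{p}{4}-1 \leq \frac{1}{2}\floor*{\frac{p-1}{2}}$ for all $p \geq 2$, so the announced bound follows at once.

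The heart of the argument is Case~(b). Here I will rely only on the crude upper bound from Lemma~\ref{lem upper bound}, which gives $O(d^{\norm{\J}/2}(\ln d)^{\norm{\I}-\norm{\J}})$, and since $\norm{\I}-\norm{\J} \leq p$, it will suffice to prove the key inequality
\begin{equation*}
\norm{\J} \leq \floor*{\tfrac{p-1}{2}}.
\end{equation*}
For this I will introduce, for each block $J \in \J$, the weight $w(J) = \sum_{I \in J} \norm{I}$. Since $\J \in \pa_\I$ and $\I \in \pa_p$, summing gives $\sum_{J \in \J} w(J) = p$. The Case~(b) hypothesis is precisely that no block $J \in \J$ satisfies $w(J)=1$ (as $w(J)=1$ forces $J=\{\{j\}\}$ with $\{j\}\in\I$). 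Hence every block satisfies $w(J) \geq 2$, and therefore $p \geq 2\norm{\J}$, i.e.~$\norm{\J} \leq p/2$.

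The final and most delicate step will be to upgrade this to a strict inequality whenever $(\I,\J) \notin \mathfrak{C}_p$. Assume toward contradiction that $\norm{\J} = p/2$. Then $w(J)=2$ for every $J \in \J$, which leaves only two possibilities for each $J$: either $J = \{I\}$ with $\norm{I}=2$, or $J = \{I_1,I_2\}$ with both $I_1$ and $I_2$ singletons of $\I$. I will then read off, directly from Definition~\ref{def CA}, that $\I$ contains only singletons and pairs, that $\J$ splits as $\J_{\I_S} \sqcup \J_{\I \setminus \I_S}$ with $\J_{\I_S} \in \pp_{\I_S}$ and $\J_{\I\setminus\I_S} = \{\{I\} \mid I \in \I \setminus \I_S\}$, where $S$ is the union of singletons of $\I$. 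This is exactly the definition of $(\I,\J) \in \mathfrak{C}_p$, contradicting our assumption. Hence $\norm{\J} < p/2$, and since $\norm{\J}$ is an integer, $\norm{\J} \leq \floor*{(p-1)/2}$, which closes the argument.

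I expect the main obstacle to be the combinatorial verification in the last step: unpacking Definition~\ref{def CA} correctly to confirm that the extremal case $w(J)\equiv 2$ coincides exactly with $\mathfrak{C}_p$, and in particular ruling out pathological situations such as a block of $\J$ mixing an element of $\I_S$ with an element of $\I \setminus \I_S$. Everything else is bookkeeping and the two quoted estimates (Lemma~\ref{lem upper bound} and Corollary~\ref{cor partitions with a singleton}).
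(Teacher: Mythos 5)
Your proof is correct and follows the paper's overall two-case structure: Case~(a), a singleton-of-a-singleton in $(\I,\J)$, is dispatched by Corollary~\ref{cor partitions with a singleton}; Case~(b) combines the crude bound of Lemma~\ref{lem upper bound} with the combinatorial estimate $\norm{\J} \leq \floor*{\frac{p-1}{2}}$. The interesting difference is in how you obtain that estimate. The paper isolates it as an auxiliary lemma (Lemma~\ref{lem combinatorics IJ}), proving $\norm{\J} \leq \frac{p}{2}$ with equality iff $(\I,\J) \in \mathfrak{C}_p$ via a chain of three inequalities tracking $\norm{\I_S}$, $\norm{\I'}$ (the blocks of $\I$ that are singleton blocks of $\J$) and $\norm{\J}$, and then characterizes the extremal case by inspecting when each inequality saturates. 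You instead introduce the weight $w(J)=\sum_{I\in J}\norm{I}$, note that $\sum_{J\in\J} w(J)=p$ while the Case~(b) hypothesis is precisely that $w(J)\geq 2$ for every block, and read off both the bound $\norm{\J}\leq \frac{p}{2}$ and the extremal structure ($w\equiv 2$ forces each block to be either a lone pair or a pair of singletons, which is exactly Definition~\ref{def CA}) in one stroke. This double-counting collapses the paper's three sub-inequalities into a single conservation law and also transparently excludes mixed blocks (one singleton plus one pair has $w\geq 3$), a point you correctly flag as needing care. Both arguments are elementary; yours is shorter and arguably more conceptual.
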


\begin{proof}
Since $\frac{p}{4}-1 = \frac{1}{2}\left(\frac{p-2}{2} -1\right) < \frac{1}{2} \floor*{\frac{p-1}{2}}$, we already know that the result holds if there exists $i \in \{1,\dots,p\}$ such that $\{i\} \in \I$ and $\{\{i\}\} \in \J$, by Corollary~\ref{cor partitions with a singleton}. Assuming that this is not the case, we will show in Lemma~\ref{lem combinatorics IJ} below that: if $\I \in \pa_p$ and $\J \in \pa_\I$, then $\norm{\J} \leq \frac{p}{2}$, and equality holds if and only if $(\I,\J) \in \mathfrak{C}_p$. Since we assumed that $(\I,\J) \notin \mathfrak{C}_p$, we have $\norm{\J} < \frac{p}{2}$. Moreover, since~$\norm{\J}$ is an integer, we have in fact $\norm{\J} \leq \floor*{\frac{p-1}{2}}$. Then, the result follows from Lemma~\ref{lem upper bound}, since $\norm{\I} \leq p$.
\end{proof}

\begin{lem}
\label{lem combinatorics IJ}
Let $p \geq 2$, let $\I \in \pa_p$ and let $\J \in \pa_\I$. We assume that, for all $i \in \{1,\dots,p\}$, if $\{i\} \in \I$ then $\{\{i\}\} \notin \J$. Then $\norm{\J} \leq \frac{p}{2}$. Moreover, $\norm{\J} = \frac{p}{2}$ if and only if $(\I,\J) \in \mathfrak{C}_p$.
\end{lem}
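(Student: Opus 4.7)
The plan is to prove the bound by a simple weight-counting argument on the blocks of $\J$, then characterize the equality case by inspection.

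For each $J \in \J$, set $w(J) = \sum_{I \in J} \norm{I}$, i.e.~the number of elements of $\{1,\dots,p\}$ covered by the blocks of $\I$ belonging to $J$. Since $\J$ is a partition of $\I$ and $\I$ is a partition of $\{1,\dots,p\}$, we have $\sum_{J \in \J} w(J) = p$. The first step is to show that $w(J) \geq 2$ for every $J \in \J$, which yields $\norm{\J} \leq p/2$ immediately. This bound distinguishes two cases: if $\norm{J} \geq 2$, then $J$ contains at least two elements of $\I$, each of cardinality at least $1$, so $w(J) \geq 2$; if $\norm{J} = 1$, write $J = \{I\}$ for some $I \in \I$, and the hypothesis forbids $\norm{I} = 1$ (because that would give $J = \{\{i\}\}$ with $\{i\} \in \I$), so $\norm{I} \geq 2$ and again $w(J) \geq 2$.

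Next, the equality $\norm{\J} = p/2$ forces $w(J) = 2$ for every $J \in \J$. Unpacking this using the dichotomy above, each $J \in \J$ is of exactly one of the following two types:
\begin{itemize}
\item $J = \{I\}$ with $I \in \I$ such that $\norm{I} = 2$ (a singleton of $\J$ whose unique element is a pair of $\I$);
\item $J = \{I_1, I_2\}$ with $I_1, I_2 \in \I$ distinct such that $\norm{I_1} = \norm{I_2} = 1$ (a pair in $\J$ of singletons of $\I$).
\end{itemize}
Since every $I \in \I$ belongs to exactly one $J \in \J$, this means that every block of $\I$ is either a singleton or a pair of elements of $\{1,\dots,p\}$. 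Let $S \subset \{1,\dots,p\}$ be the union of the singleton blocks of $\I$. Then $\I_S = \{\{s\} \mid s \in S\}$ and $\I \setminus \I_S$ is a partition of $\{1,\dots,p\} \setminus S$ into pairs, i.e.~$\I \setminus \I_S \in \pp_{\{1,\dots,p\} \setminus S}$, so Condition~\ref{cond 1} of Definition~\ref{def CA} is satisfied. Moreover, the singletons of $\J$ are exactly the sets $\{I\}$ with $I \in \I \setminus \I_S$, and the pairs of $\J$ are exactly the elements of a partition into pairs of $\I_S$; this is precisely Condition~\ref{cond 2} of Definition~\ref{def CA}, so $(\I,\J) \in \mathfrak{C}_p$.

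For the converse, assume $(\I,\J) \in \mathfrak{C}_p$ with associated $S \subset \{1,\dots,p\}$ as in Definition~\ref{def CA}. Then $\norm{\J} = \norm{\J_{\I_S}} + \norm{\J \setminus \J_{\I_S}} = \tfrac{1}{2}\norm{\I_S} + \norm{\I \setminus \I_S} = \tfrac{1}{2}\norm{S} + \tfrac{1}{2}(p - \norm{S}) = \tfrac{p}{2}$, using that $\J_{\I_S} \in \pp_{\I_S}$ and that $\I \setminus \I_S \in \pp_{\{1,\dots,p\} \setminus S}$. This completes the proof of the equivalence. The whole argument is essentially bookkeeping; the only mildly delicate point is the case analysis in the equality step, which must carefully use the hypothesis to exclude the degenerate possibility $J = \{\{i\}\}$ with $\{i\} \in \I$.
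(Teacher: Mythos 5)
Your proof is correct, and it takes a genuinely different and in some ways cleaner route than the paper's. The paper introduces $\I_S$ (the singletons of $\I$) and the auxiliary set $\I' = \{I \in \I \mid \{I\} \in \J\}$, then chains three inequalities — $\norm{\I_S} + 2\norm{\I \setminus \I_S} \leq p$, $\norm{\I'} \leq \norm{\I \setminus \I_S}$, and $\norm{\J} \leq \frac{1}{2}\left(\norm{\I} + \norm{\I'}\right)$ — to obtain $\norm{\J} \leq p/2$, tracking the equality condition in each step separately and then matching them against Conditions~\ref{cond 1} and~\ref{cond 2} of Definition~\ref{def CA}. Your argument compresses all of this into a single weight function $w(J) = \sum_{I \in J}\norm{I}$ summing to $p$ over $\J$, with the hypothesis used exactly once to yield $w(J) \geq 2$ in a two-case dichotomy; the equality characterization then drops out directly from $w(J)=2$. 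In effect you flatten the two levels of nesting ($\J \in \pa_\I$, $\I \in \pa_p$) into one counting step, avoiding the intermediate object $\I'$ entirely — this is arguably more transparent, while the paper's version stays closer to the $\I_S$/$\I'$ scaffolding it already set up to define $\mathfrak{C}_p$. One small observation on your closing remark: once $w(J)=2$ is established, the case $J=\{I\}$ with $\norm{I}=1$ is already excluded by the weight count alone; the hypothesis is genuinely needed only in the preliminary step proving $w(J) \geq 2$, not again in the equality analysis.
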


\begin{proof}
We will use notations consistent with those in Definition~\ref{def CA}. Let $\I_S=\{I \in \I, \norm{I} =1 \}$ denote the set of singletons in $\I$. We denote by $S= \bigsqcup_{I \in \I_S} I \subset \{1,\dots,p\}$, so that $\I_S$ is indeed the partition of $S$ induced by $\I$. Then $\I \setminus \I_S$ is the partition of $\{1,\dots,p\} \setminus S$ induced by $\I$, and its elements have cardinality at least $2$. Since $\{1,\dots,p\} = \bigsqcup_{I \in \I_S} I \sqcup \bigsqcup_{I \in \I \setminus \I_S} I$, we have:
\begin{equation}
\label{eq inequality p}
\norm{\I_S} + 2\norm{\I \setminus \I_S} \leq p.
\end{equation}
Moreover, equality holds in Equation~\eqref{eq inequality p} if and only if $\I \setminus \I_S$ is a partition into pairs, that is if and only if Condition~\ref{cond 1} of Definition~\ref{def CA} is satisfied.

Let us denote by $\I' = \{I \in \I \mid \{I\} \in \J\}$. Under our assumptions, if $I \in \I'$ it can not be a singleton, i.e.~$\I' \subset \I \setminus \I_S$. Hence, we have:
\begin{equation}
\label{eq inequality I'}
\norm{\I'} \leq \norm{\I \setminus \I_S},
\end{equation}
and equality holds in Equation~\eqref{eq inequality I'} if and only if $\I' = \I \setminus \I_S$.

By definition of $\I'$, we have $\J_{\I'} = \{\{I\} \mid I \in \I'\} \subset \J$. Moreover, $\J_{\I \setminus \I'} = \J \setminus \J_{\I'}$ is partition of $\I \setminus \I'$ in subsets of cardinality at least $2$. This yields:
\begin{equation}
\label{eq inequality J'}
\norm{\J} = \norm{\J_{\I'}} + \norm{\J \setminus \J_{\I'}} = \norm{\I'} + \norm{\J_{\I \setminus \I'} }\leq \norm{\I'} + \frac{1}{2}\left(\norm{\I}-\norm{\I'}\right) = \frac{1}{2}\left(\norm{\I}+\norm{\I'}\right).
\end{equation}
Moreover, equality holds in Equation~\eqref{eq inequality J'} if and only if $\J_{\I \setminus \I'}$ is a partition into pairs of $\I \setminus \I'$.

By Equations~\eqref{eq inequality p}, \eqref{eq inequality I'} and~\eqref{eq inequality J'}, we have:
\begin{equation}
\label{eq inequality J}
\norm{\J} \leq \frac{1}{2}\left(\norm{\I}+\norm{\I'}\right) \leq \frac{1}{2}\left(\norm{\I}+\norm{\I \setminus \I_S}\right) = \frac{1}{2}\left(\norm{\I_S} + 2 \norm{\I \setminus \I_S}\right)  \leq \frac{p}{2}.
\end{equation}
Moreover, we have $\norm{\J} = \frac{p}{2}$ if and only if equality holds in~\eqref{eq inequality p}, \eqref{eq inequality I'} and~\eqref{eq inequality J'}. Equality in~\eqref{eq inequality p} means that Condition~\ref{cond 1} in Definition~\ref{def CA} is satisfied. If equality holds in~\eqref{eq inequality I'} then $\I' = \I \setminus \I_S$. If in addition equality holds in Equation~\eqref{eq inequality J'}, we have that $\J_{\I \setminus \I'} = \J_{\I_S}$ is a partition of $\I_S$ into pairs and that $\J_{\I \setminus \I_S} = \J_{\I'} = \{\{I\} \mid I \in \I \setminus \I_S\} \subset \J$. Hence, Condition~\ref{cond 2} in Definition~\ref{def CA} is satisfied. Finally, if $\norm{\J} = \frac{p}{2}$ then $(\I,\J) \in \mathfrak{C}_p$. Conversely, if $(\I,\J) \in \mathfrak{C}_p$, equality holds in Equations~\eqref{eq inequality p}, \eqref{eq inequality I'} and~\eqref{eq inequality J'} and $\norm{\J} = \frac{p}{2}$.
\end{proof}

We conclude this section with the proof of Theorem~\ref{thm main}.

\begin{proof}[Proof of Theorem~\ref{thm main}]
Let $p \geq 3$ and $\phi_1,\dots,\phi_p \in \mathcal{C}^0(M)$. Let $d \in \N$ be large enough, let $s_d \in \R\H$ be a standard Gaussian vector and let $\nu_d$ denote the counting measure of the real zero set of $s_d$.

By Lemma~\ref{lem integral expression} and the fact that $M^\I = \bigsqcup_{\J \in \pa_\I} M^{\I,p}_{\J,d}$ for all $\I \in \pa_p$, we have:
\begin{equation*}
m_p(\nu_d)(\phi_1,\dots,\phi_p) = \sum_{\I \in \pa_p} \sum_{\J \in \pa_\I} \int_{\underline{x}_\I\in M^{\I,p}_{\J,d}} \iota_\I^* \phi(\underline{x}_\I) \D_d^\I(\underline{x}_\I) \rmes{M}^{\norm{\I}}.
\end{equation*}
By Lemma~\ref{lem order of the error terms}, up to an error term of the form $\left(\prod_{i=1}^p \Norm{\phi_i}_\infty \right)O\!\left(d^{\frac{1}{2}\floor*{\frac{p-1}{2}}}(\ln d)^p\right)$ we need only consider the terms in this double sum indexed by $(\I,\J) \in \mathfrak{C}_p$. The expression of these terms is given by Lemma~\ref{lem leading term}. Thus, we have:
\begin{multline}
\label{eq mp}
m_p(\nu_d)(\phi_1,\dots,\phi_p) = \sum_{(\I,\J) \in \mathfrak{C}_p} \left(\prod_{\{i,j\} \in \I \setminus \I_S} \int_M \phi_i\phi_j\mathcal{R}^1_d \rmes{M}\right)\left(\prod_{J \in \J_{\I_S}} \int_{M^J} \phi_J \D_d^J \rmes{M}^2 \right)\\
+ \left(\prod_{i=1}^p \Norm{\phi_i}_\infty\right) O\!\left(d^{\frac{1}{2}\floor*{\frac{p-1}{2}}}(\ln d)^p\right),
\end{multline}
where we used the same notations as in Definition~\ref{def CA}. In particular, recall that $S \subset \{1,\dots,p\}$ is the union of the singletons appearing in $\I$.

Let $(\I,\J) \in \mathfrak{C}_p$ and let us denote by $S \subset \{1,\dots,p\}$ the union of the singletons in $\I$. Since $\I_S = \{\{i\} \mid i \in S\}$, there is a canonical bijection $i \mapsto \{i\}$ from $S$ to $\I_S$. This first bijection induces a second one between $\pp_S$ and $\pp_{\I_S}$. Using this second bijection, $\J_{\I_S} \in \pp_{\I_S}$ corresponds to a partition into pairs of $S$, that we abusively denote by $\J_S$ in the following. Explicitly, we have $\J_S = \{\{i,j\} \mid \{\{i\},\{j\}\} \in \J_{\I_S}\} \in \pp_S$.

Let $J =\{\{i\},\{j\}\} \in \J_{\I_S}$ and let $I = \{i,j\}$ denote the corresponding pair in $\J_S$. Using Notation~\ref{ntn product indexed by A}, we have: $\phi_J = \phi_{\{i\}} \boxtimes \phi_{\{j\}} = \phi_i \boxtimes \phi_j = \phi_I$. Moreover, under the canonically identification $M^J \simeq M^I$, for all $(x,y) \in M^I$ we have:
\begin{equation*}
\D_d^J(x,y) = \mathcal{R}^2_d(x,y) - \mathcal{R}^1_d(x)\mathcal{R}^1_d(y) = \D_d^I(x,y).
\end{equation*}
Hence
\begin{equation*}
\int_{M^J} \phi_J \D_d^J \rmes{M}^2 = \int_{M^I} \phi_I \D_d^I \rmes{M}^2,
\end{equation*}
and the leading term on the right-hand side of Equation~\eqref{eq mp} equals:
\begin{equation}
\label{eq mp leading term}
\sum_{(\I,\J) \in \mathfrak{C}_p} \left(\prod_{\{i,j\} \in \I \setminus \I_S} \int_M \phi_i\phi_j\mathcal{R}^1_d \rmes{M}\right)\left(\prod_{I \in \J_S} \int_{M^I} \phi_I \D_d^I \rmes{M}^2 \right).
\end{equation}

We want to rewrite the leading term~\eqref{eq mp leading term} as a sum over $\{(\Pi,\Sigma) \mid \Pi \in \pp_p, \Sigma \subset \Pi\}$, by a change of variable. In order to do this, we now define a bijection from $\mathfrak{C}_p$ to $\{(\Pi,\Sigma) \mid \Pi \in \pp_p, \Sigma \subset \Pi\}$.
\begin{itemize}
\item Let $\Phi:\mathfrak{C}_p \to \{(\Pi,\Sigma) \mid \Pi \in \pp_p, \Sigma \subset \Pi\}$ be the map defined by:
\begin{equation*}
\Phi:(\I,\J) \mapsto \left((\I \setminus \I_S) \sqcup \J_S, \J_S\right).
\end{equation*}
For any $(\I,\J) \in \mathfrak{C}_p$, we have $\I \setminus \I_S \in \pp_{\{1,\dots,p\} \setminus S}$ and $\J_S \in \pp_S$. Hence, $(\I \setminus \I_S) \sqcup \J_S$ is indeed a partition of $\{1,\dots,p\}$ into pairs that contains $\J_S$.
\item We define a map $\Psi:\{(\Pi,\Sigma) \mid \Pi \in \pp_p, \Sigma \subset \Pi\} \to \mathfrak{C}_p$ as follows. Let $\Pi \in \pp_p$ and $\Sigma \subset \Pi$, we denote by $S = \sqcup_{I \in \Sigma} I$. We define a partition $\I = (\Pi \setminus \Sigma) \sqcup \{\{i\} \mid i \in S\} \in \pa_p$, so that $\I_S = \{\{i\} \mid i \in S\}$ and $\I_{\{1,\dots,p\} \setminus S} = \I \setminus \I_S = \Pi \setminus \Sigma$ is partition into pairs of $\{1,\dots,p\} \setminus S$. Then we define $\J \in \pa_\I$ by:
\begin{equation*}
\J = \left\{\rule{0em}{2.5ex} \{I\} \mvert I \in \I \setminus \I_S\right\} \sqcup \left\{\rule{0em}{2.5ex} \{\{i\},\{j\}\} \mvert \{i,j\} \in \Sigma\right\}.
\end{equation*}
Then $\J_{\I \setminus \I_S} = \{\{I\} \mid I \in \I \setminus \I_S\} = \J \setminus \J_{\I_S}$ and $\J_{\I_S}$ is a partition into pairs of $\I_S$. Hence $(\I,\J) \in \mathfrak{C}_p$, and we set $\Psi(\Pi,\Sigma)=(\I,\J)$. Note that we have $\J_S = \Sigma$.
\end{itemize}
By construction, $\Phi$ is a bijection from $\mathfrak{C}_p$ to $\{(\Pi,\Sigma) \mid \Pi \in \pp_p, \Sigma \subset \Pi\}$ such that $\Psi = \Phi^{-1}$. Using this bijection to change variables, the sum written in Equation~\eqref{eq mp leading term} becomes:
\begin{equation}
\label{eq final mp}
\sum_{\Pi \in \pp_p} \sum_{\Sigma \subset \Pi} \left(\prod_{\{i,j\} \in \Pi \setminus \Sigma} \int_M \phi_i\phi_j\mathcal{R}^1_d \rmes{M}\right)\left(\prod_{I \in \Sigma} \int_{M^I} \phi_I \D_d^I \rmes{M}^2 \right).
\end{equation}

On the other hand, by Lemma~\ref{lem integral expression} applied with $p=2$, for any $i$ and $j \in \{1,\dots,p\}$ distinct we have:
\begin{equation*}
m_2(\nu_d)(\phi_i,\phi_j) =\int_{M^I} \phi_I \D_d^I \rmes{M}^2 + \int_M \phi_i\phi_j \mathcal{R}^1_d \rmes{M},
\end{equation*}
where we denoted $I = \{i,j\}$. Thus,
\begin{equation*}
\sum_{\Pi \in \pp_p} \prod_{\{i,j\} \in \Pi} m_2(\nu_d)(\phi_i,\phi_j)
\end{equation*}
is equal to the term in Equation~\eqref{eq final mp}, hence is the leading term in Equation~\eqref{eq mp}. This proves the first claim in Theorem~\ref{thm main}. We obtain the second expression of $m_p(\nu_d)(\phi_1,\dots,\phi_p)$ in Theorem~\ref{thm main} by replacing $m_2(\nu_d)$ by its asymptotics, computed in Theorem~\ref{thm variance}.

Let $\phi \in \mathcal{C}^0(M)$. Using what we just proved with $\phi_i=\phi$ for all $i \in \{1,\dots,p\}$, we have:
\begin{equation*}
m_p(\prsc{\nu_d}{\phi}) = m_p(\nu_d)(\phi,\dots,\phi) = \card(\pp_p) \left(m_2(\nu_d)(\phi,\phi)\right)^\frac{p}{2} + \Norm{\phi}_\infty^p O\!\left(d^{\frac{1}{2}\floor*{\frac{p-1}{2}}}(\ln d)^p\right).
\end{equation*}
By definition, $m_2(\nu_d)(\phi,\phi)=\var{\prsc{\nu_d}{\phi}}$. Besides, the cardinality of $\pp_p$ is equal to $0$ if $p$ is odd, and to $2^{-\frac{p}{2}}\left(\frac{p}{2}!\right)^{-1}p!$ if $p$ is even. In both cases we have $\card(\pp_p)=\mu_p$ (see Definition~\ref{def mu p}). This proves the first expression of $m_p(\prsc{\nu_d}{\phi})$ stated in Theorem~\ref{thm main}. We obtain the second expression of $m_p(\prsc{\nu_d}{\phi})$ by using the expression of $\var{\prsc{\nu_d}{\phi}}$ given by Theorem~\ref{thm variance}.
\end{proof}


\section{Proof of the corollaries of Theorem~\ref{thm main}}
\label{sec proof of the corollaries}

In this section, we prove the corollaries of Theorem~\ref{thm main}. The strong Law of Large Numbers (Theorem~\ref{thm law of large numbers}) is proved in Section~\ref{subsec proof of the LLN}. The Central Limit Theorem (Theorem~\ref{thm central limit theorem}) is proved in Section~\ref{subsec proof of the CLT}. Finally, we prove Corollaries~\ref{cor concentration} and~\ref{cor hole probability} in Section~\ref{subsec proof of the corollaries}.


\subsection{Proof of the strong Law of Large Numbers (Theorem~\ref{thm law of large numbers})}
\label{subsec proof of the LLN}

The purpose of this section is to prove the strong Law of Large Numbers (Theorem~\ref{thm law of large numbers}). This result follows from the moments estimates of Theorem~\ref{thm main} by a Borel--Cantelli type argument, using the separability of $\mathcal{C}^0(M)$. This method was already used in~\cite{Let2019,LP2019,SZ1999}, for example.

We follow the notation of Section~\ref{sec framework and background}. In particular, recall that $(s_d)_{d \geq 0}$ is a sequence of random vectors such, for all $d \geq 0$, $s_d \sim \mathcal{N}(0,\Id)$ in $\R\H$. Then, $Z_d$ is the real zero set of $s_d$ and $\nu_d$ is the counting measure of $Z_d$.

\begin{proof}[Proof of Theorem~\ref{thm law of large numbers}]
Let us first consider the case of one test-function $\phi \in \mathcal{C}^0(M)$. We have:
\begin{equation*}
\esp{\sum_{d \geq 1} \left(\frac{\prsc{\nu_d}{\phi} - \esp{\prsc{\nu_d}{\phi}}}{d^\frac{1}{2}}\right)^6} = \sum_{d \geq 1} d^{-3} m_6(\nu_d)(\phi,\dots,\phi).
\end{equation*}
By Theorem~\ref{thm main} this sum is finite. Indeed, $m_6(\nu_d)(\phi,\dots,\phi) = O(d^\frac{3}{2})$. Then, almost surely, we have:
\begin{equation*}
\sum_{d \geq 1} \left(\frac{\prsc{\nu_d}{\phi} - \esp{\prsc{\nu_d}{\phi}}}{d^\frac{1}{2}}\right)^6 < +\infty,
\end{equation*}
hence $d^{-\frac{1}{2}}\left(\prsc{\nu_d}{\phi} - \rule{0pt}{10pt} \esp{\prsc{\nu_d}{\phi}}\right) \xrightarrow[d \to +\infty]{} 0$. Thus, for all $\phi \in \mathcal{C}^0(M)$, we have almost surely:
\begin{equation*}
d^{-\frac{1}{2}} \prsc{\nu_d}{\phi} = d^{-\frac{1}{2}} \esp{\prsc{\nu_d}{\phi}} + o(1) = \frac{1}{\pi} \int_M \phi \rmes{M} + o(1).
\end{equation*}
Applying this to $\phi = \mathbf{1}$, the constant unit function, we obtain: $d^{-\frac{1}{2}}\card(Z_d) \xrightarrow[d \to +\infty]{} \frac{1}{\pi}\vol{M}$ almost surely.

Now, recall that $\left(\mathcal{C}^0(M),\Norm{\cdot}_\infty\right)$ is separable. Let $(\phi_k)_{k \geq 0}$ denote a dense sequence in this space such that $\phi_0 =\mathbf{1}$. Almost surely, for all $k \geq 0$, we have $d^{-\frac{1}{2}} \prsc{\nu_d}{\phi_k} \xrightarrow[d \to +\infty]{} \frac{1}{\pi} \int_M \phi_k \rmes{M}$. Let $\underline{s}=(s_d)_{d \geq 1} \in \prod_{d \geq 1} \R\H$ denote a fixed sequence belonging to the probability $1$ event on which this condition holds. For every $\phi \in \mathcal{C}^0(M)$ and $k \in \N$, we have:
\begin{multline*}
\norm{d^{-\frac{1}{2}}\prsc{\nu_d}{\phi} - \frac{1}{\pi} \int_M \phi \rmes{M}}\\
\begin{aligned}
&\leq d^{-\frac{1}{2}}\norm{\prsc{\nu_d}{\phi-\phi_k}} + \norm{d^{-\frac{1}{2}}\prsc{\nu_d}{\phi_k} - \frac{1}{\pi} \int_M \phi_k \rmes{M}} + \frac{1}{\pi} \int_M \norm{\phi-\phi_k} \rmes{M}\\
&\leq \Norm{\phi-\phi_k}_\infty \left(d^{-\frac{1}{2}}\card(Z_d) + \frac{\vol{M}}{\pi}\right) + \norm{d^{-\frac{1}{2}}\prsc{\nu_d}{\phi_k} - \frac{1}{\pi} \int_M \phi_k \rmes{M}}.
\end{aligned}
\end{multline*}
Since $\phi_0=\mathbf{1}$, the sequence $(d^{-\frac{1}{2}}\card(Z_d))_{d \geq 1}$ converges, hence is bounded by some $K >0$. Let $\epsilon >0$, there exists $k \in \N$ such that $\Norm{\phi-\phi_k}_\infty < \epsilon$. Then, for every $d$ large enough, we have:
\begin{equation*}
\norm{d^{-\frac{1}{2}}\prsc{\nu_d}{\phi_k} - \frac{1}{\pi} \int_M \phi_k \rmes{M}} < \epsilon,
\end{equation*}
hence
\begin{equation*}
\norm{d^{-\frac{1}{2}}\prsc{\nu_d}{\phi} - \frac{1}{\pi} \int_M \phi \rmes{M}} < \epsilon \left(1 + K +\frac{\vol{M}}{\pi}\right).
\end{equation*}
Thus, for all $\phi \in \mathcal{C}^0(M)$, we have $d^{-\frac{1}{2}}\prsc{\nu_d}{\phi} \xrightarrow[d \to +\infty]{} \frac{1}{\pi} \int_M \phi \rmes{M}$, which concludes the proof.
\end{proof}


\subsection{Proof of the Central Limit Theorem (Theorem~\ref{thm central limit theorem})}
\label{subsec proof of the CLT}

In this section, we prove the Central Limit Theorem (Theorem~\ref{thm central limit theorem}). The result follows from Theorem~\ref{thm main} by the method of moments, see~\cite[Chap.~30]{Bil1995}. This method allows to prove the Central Limit Theorem for any fixed continuous test-function. Then, we apply the Lévy--Fernique Theorem~\cite[Theorem~III.6.5]{Fer1967}, which is the analogue of Lévy's Continuity Theorem for random generalized functions. Using this theorem, we prove the convergence in distribution to the Standard Gaussian White Noise in $\D'(M)$, the space of generalized functions on $M$.

\begin{proof}[Proof of Theorem~\ref{thm central limit theorem}]
\textbf{Central Limit Theorem for a fixed test-function.}
Let $\phi \in \mathcal{C}^0(M)$ be such that $\phi \neq 0$. We define a sequence $(X_d)$ of centered and normalized real random variables by:
\begin{equation*}
X_d = \frac{\prsc{\nu_d}{\phi} - \esp{\prsc{\nu_d}{\phi}}}{\var{\prsc{\nu_d}{\phi}}^\frac{1}{2}}.
\end{equation*}
Note that since $\phi \neq 0$, by Theorem~\ref{thm variance}, $\var{\prsc{\nu_d}{\phi}} = m_2(\nu_d)(\phi,\phi)$ is positive for $d$ large enough. Hence $X_d$ is well-defined for $d$ large enough, and we want to prove that $X_d \xrightarrow[d \to +\infty]{} \mathcal{N}(0,1)$ in distribution. By Theorem~\ref{thm variance} and~\ref{thm main}, for any integer $p \geq 3$, we have:
\begin{equation*}
\esp{X_d^p} = \frac{m_p(\nu_d)(\phi,\dots,\phi)}{\var{\prsc{\nu_d}{\phi}}^\frac{p}{2}} \xrightarrow[d \to +\infty]{} \mu_p,
\end{equation*}
where $\mu_p$ is the $p$-th moment of an $\mathcal{N}(0,1)$ variable (recall Definition~\ref{def mu p}). By the Theorem of Moments (cf.~\cite[Theorem~30.2]{Bil1995}), this implies that $X_d \xrightarrow[d \to +\infty]{}\mathcal{N}(0,1)$ in distribution. Replacing the expectation and the variance of $\prsc{\nu_d}{\phi}$ by their asymptotics (Theorem~\ref{thm expectation} and Theorem~\ref{thm variance}), we get:
\begin{equation*}
X_d = \frac{1}{d^\frac{1}{4}\sigma} \left(\int_M \phi^2 \rmes{M})\right)^{-\frac{1}{2}} \left(\prsc{\nu_d}{\phi}- \frac{d^\frac{1}{2}}{\pi} \int_M \phi \rmes{M}\right) +o(1).
\end{equation*}
Hence,
\begin{equation*}
\frac{1}{d^\frac{1}{4}\sigma} \left(\prsc{\nu_d}{\phi}- \frac{d^\frac{1}{2}}{\pi} \int_M \phi \rmes{M}\right) \xrightarrow[d \to +\infty]{} \mathcal{N}(0,\Norm{\phi}^2_2),
\end{equation*}
in distribution, where $\Norm{\phi}_2^2 = \int_M \phi^2 \rmes{M}$. Of course, this also holds for $\phi=0$. Taking $\phi = \mathbf{1}$ yields the Central Limit Theorem for the cardinality of $Z_d$.

\textbf{Central Limit Theorem in $\D'(M)$.}
Given $d \geq 1$, let us consider the random measure $T_d = \left(d^\frac{1}{4}\sigma\right)^{-1} \left(\nu_d - d^\frac{1}{2}\rmes{M}\right)$. This measure defines a random generalized function. Recall that its characteristic functional is the map $\chi_d :\phi \mapsto \esp{\exp\left(i\prsc{T_d}{\phi}_{(\D',\mathcal{C}^\infty)}\right)}$ from $\mathcal{C}^\infty(M)$ to $\C$. Let $\phi \in \mathcal{C}^\infty(M)$, we just proved that $\prsc{T_d}{\phi}_{(\D',\mathcal{C}^\infty)} = \prsc{T_d}{\phi} \xrightarrow[d \to +\infty]{} \mathcal{N}(0,\Norm{\phi}_2^2)$. Hence,
\begin{equation*}
\chi_d(\phi) = \esp{e^{i\prsc{T_d}{\phi}}} \xrightarrow[d \to +\infty]{} e^{-\frac{1}{2}\Norm{\phi}_2^2},
\end{equation*}
where we recognized the characteristic function of $\prsc{T_d}{\phi}$ evaluated at $1$. Thus, the sequence $(\chi_d)$ converges pointwise to $\chi:\phi \mapsto e^{-\frac{1}{2}\Norm{\phi}_2^2}$, which is the characteristic functional of the Standard Gaussian White Noise~$W$ (recall Definition~\ref{def W}). Then, by the Lévy--Fernique Theorem (cf.~\cite[Theorem~III.6.5]{Fer1967}), we have $T_d \xrightarrow[d \to +\infty]{} W$ in distribution, as random variables in $\D'(M)$.

\textbf{Central Limit Theorem for a family of smooth test-functions.}
Let $\phi_1,\dots,\phi_k \in \mathcal{C}^\infty(M)$, by the Continuous Mapping Theorem, we have:
\begin{equation*}
\left(\prsc{T_d}{\phi_1}_{(\D',\mathcal{C}^\infty)},\dots,\prsc{T_d}{\phi_k}_{(\D',\mathcal{C}^\infty)}\right) \xrightarrow[d \to +\infty]{} \left(\prsc{W}{\phi_1}_{(\D',\mathcal{C}^\infty)},\dots,\prsc{W}{\phi_k}_{(\D',\mathcal{C}^\infty)}\right)
\end{equation*}
in distribution, as random vectors in $\R^k$. This yields the joint Central Limit Theorem for a family of smooth test-functions.
\end{proof}


\subsection{Proof of Corollaries~\ref{cor concentration} and~\ref{cor hole probability}}
\label{subsec proof of the corollaries}

We first prove Corollary~\ref{cor concentration}, that is the concentration in probability. This is just an application of Markov's Inequality.

\begin{proof}[Proof of Corollary~\ref{cor concentration}]
Let $(\epsilon_d)_{d \geq 1}$ be a positive sequence and let $p \in \N^*$. By Markov's inequality for the $2p$-th moment, we have:
\begin{equation*}
\P\left(d^{-\frac{1}{2}}\norm{\prsc{\nu_d}{\phi} - \esp{\prsc{\nu_d}{\phi}}} > \epsilon_d \right) \leq \epsilon_d^{-2p} d^{-p} m_{2p}(\nu_d)(\phi,\dots,\phi).
\end{equation*}
By Theorem~\ref{thm main}, the term on the right-hand side is $O\!\left((d^\frac{1}{4}\epsilon_d)^{-2p}\right)$. As usual, one obtains the statement about $\card(Z_d)$ by specializing the result for $\phi = \mathbf{1}$.
\end{proof}

The proof of Corollary~\ref{cor hole probability}, that is of the Hole probability, uses Corollary~\ref{cor concentration} with the right test-function and a constant sequence $(\epsilon_d)_{d \geq 1}$. It is similar to the proof of~\cite[Corollary~1.10]{Let2019}.

\begin{proof}[Proof of Corollary~\ref{cor hole probability}]
Let $U$ be a non-empty open subset of $M$, and let $\phi_U:M \to \R$ be a continuous function such that for all $x \in M$, $\phi_U(x)>0$ if $x \in U$, and $\phi_U(x)=0$ otherwise. Let $\epsilon >0$ be such that $\epsilon < \frac{1}{\pi} \int_M \phi_U \rmes{M}$. By Theorem~\ref{thm expectation}, for all $d$ large enough we have $d^{-\frac{1}{2}}\esp{\prsc{Z_d}{\phi_U}} > \epsilon$. For a degree $d$ such that this condition holds, we have:
\begin{equation*}
\P\left(Z_d \cap U = \emptyset\right) = \P\left(\prsc{Z_d}{\phi_U} = 0\right) \leq \P\left(d^{-\frac{1}{2}}\norm{\prsc{Z_d}{\phi_U}-\rule{0pt}{10pt}\esp{\prsc{Z_d}{\phi_U}}}> \epsilon\right).
\end{equation*}
By Corollary~\ref{cor concentration}, the right-hand side is $O\!\left(d^{-\frac{p}{2}}\right)$ for all $p \in \N^*$.
\end{proof}


\bibliographystyle{amsplain}
\bibliography{RootsOfKostlanPolynomials}

\end{document}